\newtheoremstyle{mythm}%
  {\topskip}%        above
  {\topskip}%        below
  {\itshape}%       body font
  {}%               indent amount
  {\bfseries}%      head font
  {\\}%               punctuation after head
  {\parindent}%       after head
  {\thmname{#1}\thmnumber{ #2}\thmnote{ \textit{#3}}}%     head spec
\newtheoremstyle{mydef}%
  {\topskip}%        above
  {\topskip}%        below
  {\itshape}%       body font
  {}%               indent amount
  {\bfseries}%      head font
  {\\}%               punctuation after head
  {\parindent}%       after head
  {\thmname{#1}\thmnumber{ #2}\thmnote{ \textit{#3}}}%     head spec
\theoremstyle{mythm}
\newtheorem{lem}{Lemma}[subsection]
\newtheorem{thm}[lem]{Theorem}
\newtheorem*{thm*}{Theorem}
\newtheorem{cor}[lem]{Corollary}
\newtheorem*{cor*}{Corollary}
\newtheorem{prop}[lem]{Proposition}
\newtheorem{hyp}[lem]{Hypothesis}
\theoremstyle{mydef}
\newtheorem{defn}[lem]{Definition}
\newtheorem{warn}[lem]{Warning}
\newtheorem{rem}[lem]{Remark}
\newcommand{\mbbm}[1]{\mathbbm #1}
\newcommand{\mbb}[1]{\mathbb #1}
\newcommand{\mf}[1]{\mathfrak #1}
\newcommand{\mc}[1]{\mathcal #1}
\newcommand{\ms}[1]{\mathscr #1}
\newcommand{\oper}[1]{\operatorname{#1}}
\newcommand{\m}[1]{\mf{#1}}
\newcommand{\dra}{\dashrightarrow}
\newcommand{\Br}{\oper{Br}}
\newcommand{\W}{\oper{W}}
\newcommand{\Hom}{\oper{Hom}}
\newcommand{\cha}{\oper{char}}
\newcommand{\Spec}{\oper{Spec}}
\newcommand{\id}{\oper{id}}
\newcommand{\co}{\oper{R}}
\newcommand{\fracf}[1]{\oper{frac}(#1)}
\newcommand{\A}{\mbb A}
\newcommand{\F}{F}
\newcommand{\fk}{k}
\newcommand{\ov}{\overline}
\newcommand{\til}{\widetilde}
\newcommand{\wh}{\widehat}
\newcommand{\redcl}[1]{{#1}_{\fk}^{\text{red}}}
\newcommand{\points}[1]{S({#1})}
\newcommand{\components}[1]{\mc U({#1})}
\newcommand{\branches}[1]{\mc B({#1})}
\title{Field patching, factorization, and local-global principles}
\author{Daniel Krashen}
\begin{document}
\maketitle

\section{Introduction}

The goal of this paper is twofold - first to give an introduction to
the method of field patching as first presented in \cite{HH:FP}, and later used in \cite{HHK},
paying special attention to the relationship between
factorization and local-global principles and second, to extend the
basic factorization result in \cite{HHK} to the case of retract
rational groups, thereby answering a question posed to the author by J. L. Colliot-Th\'el\`ene.

Throughout, we fix a complete discrete valuation ring $T$ with field of
fractions $K$ and residue field $\fk$. Let $t \in T$ be a uniformizer.
Let $X/K$ be a smooth projective curve and $F$ its function field. 

Broadly speaking, the method of field patching is a procedure for
constructing new fields $F_\xi$ which will be in certain ways simpler
than $F$, and to reduce problems concerning $F$ to problems about the
various $F_\xi$. Overall, there are two ways in which this is done. Let
us suppose that we are interested in studying a particular type of
arithmetic object, such as a quadratic form, a central simple algebra,
etc.

\noindent
\textbf{Constructive Strategy (Patching) : } This consists in showing
that under suitable hypotheses, algebraic objects defined over the
fields $F_\xi$ which are ``compatible,'' exactly correspond to objects
defined over $F$ (see Theorem~\ref{object patching}). One may then use
this idea to construct new examples and counterexamples of such objects
by building them ``locally.''

\noindent
\textbf{Deconstructive Strategy (Local-global principle) : } We say
that a particular type of algebraic object satisfies a local-global
principle if whenever an object defined over $F$ becomes ``trivial''
when scalars are extended to each $F_\xi$, it must in fact have been
trivial to begin with (see Section~\ref{local global principles}). 

In this paper, we will not focus on these applications, which are
discussed for example in \cite{HH:FP, HHK, HHK:admissibility, CTPaSu}.
Instead, we focus on elucidating and extending the underlying methods
used.

\section*{}

The author is grateful to David Harbater, Julia Hartmann, and the
anonymous referee for numerous helpful comments and corrections in the
preparation of this manuscript.

\section{Patches and local-global principles}

\subsection{Fields associated to patches}

The fields $F_\xi$ are not canonically defined - they depend on a
number of choices, beginning with the choice of a model for $X$ over
$T$. 

\begin{defn} [Models] \label{regular model}
A \textbf{model} for the scheme $X/K$ is defined to be a connected normal
projective $\mbb P^1_T$-scheme $\wh X$ such that 
\begin{enumerate}
\item
the structure morphism $f : \wh X \to \mbb P^1_T$ is finite,
\item
considered as a $T$-scheme, the generic fiber $\wh X_K$ is isomorphic to
$X$,
\item The reduced closed fiber $\redcl{\wh X}$ is a normal crossings
divisor in $\wh X$,
\item $f^{-1} (\infty)$ contains all the singular points of the
reduced closed fiber $\redcl{\wh X}$.
\end{enumerate}
\end{defn}

Given a model $\wh X$ (we will generally suppress the morphism $f : \wh
X \to \mbb P^1_T$ from the notation), we let $\points{\wh X}$ denote the
set of closed points in $f^{-1} (\infty)$ and $\components{\wh X}$ denote
the set of connected (or equivalently, irreducible) components of
$\redcl{\wh X} \setminus \points{\wh X}$. These sets play a critical
role in what follows.

\begin{warn}
In other sources such as \cite{HH:FP, HHK, HHK:admissibility}, $\wh X$ is not given
the structure of a $\mbb P^1_T$-scheme, but rather the structure of a
$T$ scheme together with a distinguished set of closed points $S$. In
this context, one is allowed more general sets $S$. The reader must keep
in mind that a model $\wh X$ comes with the extra structure of a
morphism to $\mbb P^1$ throughout!
\end{warn}

It is perhaps a bit odd to include the finite morphism to $\mbb P^1_T$
as part of the definition of a model --- by comparison, in \cite{HH:FP},
it is only assumed that one should start with a projective $T$-curve
with a set $S$ of closed points such that there exists a finite
$T$-morphism to a curve with smooth reduced closed fiber and such that
the set $S$ is the inverse image of a set of closed points under this
morphism. We include the morphism to $\mbb P^1_T$ as part of our
definition simply as a matter of convenience of exposition. The following lemma shows that it is not much of an extra assumption, however: 
\begin{lem}[\cite{HH:FP}, Proposition~6.6]
Suppose $\wh X$ is a projective $T$-curve and $S \subset \wh X$ a finite set
of closed points. Then there exists a finite morphism $f : \wh X \to \mbb
P^1$ such that $S \subset f^{-1} (\infty)$.
\end{lem}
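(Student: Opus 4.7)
The plan is to construct $f$ as a composition in two stages: first to exhibit some finite $T$-morphism $f_0 \colon \wh X \to \mbb P^1_T$ (with no constraint on $f_0(S)$) by a Noether-normalization-style argument, and then to post-compose with a carefully chosen finite endomorphism $\phi$ of $\mbb P^1_T$ designed so that $\phi$ sends every point of $f_0(S)$ to $\infty$.

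For the first step, I would embed $\wh X$ as a closed subscheme of some $\mbb P^N_T$ via a very ample line bundle, with $N$ chosen reasonably large. Since $\wh X$ has relative dimension $1$ over $T$, the expected intersection of each fibre of $\wh X$ with a codimension-$2$ linear subspace of the corresponding fibre of $\mbb P^N_T$ has dimension $(N-2)+1-N = -1$. I would therefore select two $T$-linear forms $\ell_0, \ell_1$ on $\mbb P^N_T$ whose reductions over $K$ and over $\fk$ are both suitably generic, and set $\Lambda := V(\ell_0,\ell_1) \subset \mbb P^N_T$, so that $\Lambda \cap \wh X = \emptyset$ fiberwise and hence globally. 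The linear projection away from $\Lambda$ then restricts to a genuine morphism $f_0 \colon \wh X \to \mbb P^1_T$, which is proper (since $\wh X$ is proper over $T$) and whose fibres are intersections of $\wh X$ with relative hyperplanes through $\Lambda$, of the expected dimension $0$. Hence $f_0$ is quasi-finite, and together with properness this makes $f_0$ finite.

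For the second step, every closed point of $\wh X$ lies over the closed point of $\Spec T$, so $f_0(S) = \{q_1, \dots, q_m\}$ is a finite set of closed points of the closed fibre $\mbb P^1_\fk$. After a preliminary automorphism of $\mbb P^1_T$, I may arrange that $\infty \notin f_0(S)$, whereupon each $q_j$ is cut out in the affine chart $\Spec T[x]$ by a monic polynomial $\bar g_j \in \fk[x]$. I then lift each $\bar g_j$ to a monic $g_j \in T[x]$ and set $Q := g_1 \cdots g_m \in T[x]$. The rule $x \mapsto 1/Q(x)$ homogenizes to a well-defined $T$-morphism $\phi \colon \mbb P^1_T \to \mbb P^1_T$, $[u:v] \mapsto [v^{\deg Q} : \widetilde Q(u,v)]$, the monicity of $Q$ being precisely what lets $\phi$ extend across $\infty$ without indeterminacy; this $\phi$ is finite of degree $\deg Q$, and by construction $\phi^{-1}(\infty) \supseteq \{q_1, \dots, q_m\}$. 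Setting $f := \phi \circ f_0$ then produces a finite morphism with $f^{-1}(\infty) \supseteq S$.

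The main obstacle lies in the first step: since $\wh X$ has absolute dimension $2$ whereas $\mbb P^1$ has absolute dimension $1$, no absolute finite map is possible, and one must exploit the $T$-scheme structure so that the target becomes the $2$-dimensional $\mbb P^1_T$. Extra care is needed when $\fk$ is finite, where genuine ``generic'' choices are unavailable: to guarantee $\ell_0, \ell_1$ exist, one either replaces the very ample bundle by a high power (so that $N$ grows enough that the bad locus in the Grassmannian of $\Lambda$'s has $\fk$-rational points in its complement) or constructs the forms explicitly. Once $f_0$ is in hand, the post-composition step is essentially polynomial bookkeeping over $T$.
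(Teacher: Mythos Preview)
The paper does not supply its own proof of this lemma: it is stated with a bare citation to \cite{HH:FP}, Proposition~6.6, and no argument is given in the present text. So there is nothing here to compare your proposal against directly.

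That said, your two-step strategy is sound and is close in spirit to how such results are typically proved. A couple of remarks. In step~1, once you know $V(\bar\ell_0,\bar\ell_1)\cap \wh X_\fk=\emptyset$ over the residue field and lift $\bar\ell_i$ to $\ell_i\in T$, the intersection $V(\ell_0,\ell_1)\cap\wh X$ is proper over $T$ with empty closed fibre, hence empty; and disjointness of $\Lambda$ from $\wh X$ already forces $f_0$ to be quasi-finite, since a fibre component contracted by $f_0$ would lie in some $V(b\ell_0-a\ell_1)$ and would then meet $\Lambda$. So the genuine content of step~1 is exactly the finite-field avoidance problem you flag.

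There is one small wrinkle in step~2: your ``preliminary automorphism of $\mbb P^1_T$'' sending $\infty$ off $f_0(S)$ amounts to finding a $\fk$-rational point of $\mbb P^1_\fk$ outside $f_0(S)$, and for very small $\fk$ it may happen that $f_0(S)\supseteq \mbb P^1(\fk)$. This is easy to repair: either fold the extra condition ``$\ell_1$ does not vanish at any point of $S$'' into the choice of $\ell_0,\ell_1$ in step~1 (at the cost of a further Veronese), or drop the automorphism and build $\phi$ directly by choosing homogeneous $s_0,s_1\in T[u,v]_d$ with $\bar s_1$ vanishing at each $q_j$ and $\bar s_0$ vanishing at none of them --- the restriction map $H^0(\mbb P^1_\fk,\mathcal O(d))\to \prod_j \fk(q_j)$ is surjective once $d\ge \sum_j[\fk(q_j):\fk]$, so such $\bar s_0$ exists regardless of the size of $\fk$. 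With that adjustment the argument goes through.
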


For the remainder of the section, we will suppose that we are given such
a model $\wh X$, and we let $F = \F(\wh X)$ be its function field. Given
any nonempty subset of points $Z \subset \wh X$, we define
\[ R_Z = \{ f \in F \mid \forall P \in Z, f \in \mc O_{\wh X, P} \} \]

We will define fields associated to two particular types of subsets $Z$:

\begin{defn} [Fields associated to closed points]
Let $P \in \wh X$ be a closed point. We define $R_P = R_{\{P\}} = \mc
O_{\wh X, P}$, $\wh R_P$ its completion with respect to its maximal ideal,
and $F_P$ the field of fractions of $\wh R_P$.
\end{defn}

\begin{defn} [Fields associated to open subsets of $\redcl{\wh X}$]
Let $U \subset \redcl{\wh X}$ be a nonempty irreducible Zariski open
affine subset of the reduced closed fiber  which is disjoint from the
singular locus of $\redcl{\wh X}$. We let $\wh R_U$ be the completion of
$R_U$ with respect to the ideal $t R_U$, and $F_U$ the field of fractions
of $\wh R_U$.
\end{defn}

Note that there are natural maps $F \subset F_U, F_P$ for any such $P$
and $U$, as well as inclusions $F_U \to F_V$ and $F_U \to F_P$,
whenever $V \subset U$ or $P \in U$ respectively.

\subsection{Some local-global principles} \label{local global
principles}

We may now give some examples of local-global principles. For these, we
assume that $X/K$ is a smooth projective curve where $K$ is a complete
discretely valued field with valuation ring $T$, and that we are given a
model $\wh X \to \mbb P^1$. We let $F$ be the function field of $X$.

\begin{thm}[Local-global principle for the Brauer group (see \cite{HH:FP}, Theorem~4.10)] \label{brauer local global}
Let $\Br(\cdot)$ denote the Brauer group. The natural homomorphism
\[ \Br(F) \to \left( \prod\limits_{P \in \points{\wh X}} \Br(F_P) \right) \times
\left( \prod\limits_{U \in \components{\wh X}} \Br(F_U) \right) \]
is injective.
\end{thm}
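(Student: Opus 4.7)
The plan is to deduce the injectivity from the patching/factorization machinery of \cite{HH:FP}. Suppose $\alpha \in \Br(F)$ lies in the kernel and choose a central simple $F$-algebra $A$ of some degree $n$ representing $\alpha$. By hypothesis, for each $\xi$ ranging over $\points{\wh X} \cup \components{\wh X}$ there exists an isomorphism $\varphi_\xi \colon A \otimes_F F_\xi \xrightarrow{\sim} \Mat_n(F_\xi)$. The goal is to modify the family $\{\varphi_\xi\}$ into one which agrees on the natural ``overlap'' fields associated to the branches of $\redcl{\wh X}$ at the points of $\points{\wh X}$, and then to invoke the patching correspondence (Theorem~\ref{object patching}) to descend the resulting compatible family to an $F$-algebra isomorphism $A \xrightarrow{\sim} \Mat_n(F)$, forcing $\alpha = 0$.

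For a point $P \in \points{\wh X}$ lying on the closure of a component $U \in \components{\wh X}$, the relevant overlap along the corresponding branch is a field $F_{P,U}$ admitting canonical maps $F_P \to F_{P,U}$ and $F_U \to F_{P,U}$. Comparing the restrictions of $\varphi_P$ and $\varphi_U$ to $F_{P,U}$ produces an automorphism of $\Mat_n(F_{P,U})$ which, by Skolem--Noether, is inner conjugation by some class $g_{P,U} \in \oper{PGL}_n(F_{P,U})$. The central technical input is then the factorization theorem for $\GL_n$ (equivalently $\oper{PGL}_n$) from \cite{HH:FP}: each such class can be written as a product $a_{P,U} \cdot b_{P,U}$ with $a_{P,U}$ the image of an element of $\oper{PGL}_n(F_P)$ and $b_{P,U}$ the image of an element of $\oper{PGL}_n(F_U)$. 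Conjugating $\varphi_P$ and $\varphi_U$ by appropriate lifts of $a_{P,U}$ and $b_{P,U}^{-1}$ cancels the discrepancy along that branch, and carrying out these modifications simultaneously over all branches produces a family of trivializations which agree on every overlap.

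The constructive side of patching (Theorem~\ref{object patching}) then packages this compatible local data into an $F$-algebra isomorphism $A \xrightarrow{\sim} \Mat_n(F)$, whence $\alpha = 0$. The principal obstacle is the factorization step: granted factorization for $\GL_n$ together with the algebra-patching equivalence, the rest of the argument is essentially formal cocycle bookkeeping. Factorization for $\GL_n$ is itself a genuine theorem, encoding in an arithmetic setting content analogous to Cartan's lemma from several complex variables, and ultimately depending on a careful analysis of how the complete local rings $\wh R_P$ and $\wh R_U$ sit inside the ring associated to a branch.
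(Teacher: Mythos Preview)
Your approach is correct and, while it rests on the same underlying engine (factorization for a rational linear group), it takes a different route from the paper's proof. The paper argues via the Severi--Brauer variety $H$ of $A$: this is a projective homogeneous variety for the rational connected reductive group $\GL(A)$, and the hypothesis gives $H(F_\xi)\neq\emptyset$ for all $\xi$; invoking Corollary~\ref{local global reductive homogeneous} (itself a consequence of factorization, via Proposition~\ref{transitive rational points}) yields $H(F)\neq\emptyset$, hence $\alpha=0$. Your argument instead stays at the level of algebra isomorphisms and uses the full faithfulness of the patching functor $\Omega_{\ms T}$: a compatible family of local trivializations of $A$ descends to a global isomorphism $A\cong\Mat_n(F)$. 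This is essentially the implication $(3)\Rightarrow(1)$ of Proposition~\ref{local global factorization}, specialized to the object $\Mat_n$ with automorphism group $\oper{PGL}_n$. The paper's route packages the Skolem--Noether/cocycle bookkeeping into the single geometric statement ``$H$ has a rational point''; yours is more hands-on but avoids the detour through homogeneous varieties.

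Two small points of care in your write-up. First, the overlap data are indexed by \emph{branches} $\wp\in\branches{\wh X}$, not by pairs $(P,U)$: a point can meet a component in several analytic branches, each carrying its own field $F_\wp$. Second, and more importantly, the factorization you invoke must be the simultaneous one of Definition~\ref{factorization definition 1}: you need a \emph{single} element $g_P\in\oper{PGL}_n(F_P)$ for each $P$ and a single $g_U\in\oper{PGL}_n(F_U)$ for each $U$ with $g_\wp=g_Pg_U$ for every branch $\wp$ at $P$ on $U$, so that the corrections to the various $\varphi_\xi$ do not conflict with one another. Your phrase ``simultaneously over all branches'' suggests you see this, but the notation $a_{P,U},\,b_{P,U}$ obscures it; writing $g_P,\,g_U$ would make the argument watertight.
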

We give a proof of this result on page \pageref{brauerproof}.  In fact,
we will see later, using patching, that this may be extended to a three
term exact sequence by adding a term on the right (see
Theorem~\ref{brauer exact}).

\begin{thm}[Local-global principle for isotropy (see \cite{HHK}, Theorem~4.2)] \label{quadratic local global}
Suppose $q$ is a regular quadratic form of dimension at least 3, and
$\cha(F) \neq 2$. If $q_{F_P}$ and $q_{F_U}$ are isotropic for every $P
\in \points{\wh X}$ and $U \in \components{\wh X}$ then $q$ is also
isotropic.
\end{thm}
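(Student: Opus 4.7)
The plan is to reduce isotropy of $q$ over $F$ to a patching problem for vectors, via a factorization argument for the special orthogonal group of $q$. For each $P \in \points{\wh X}$ and each $U \in \components{\wh X}$, begin by choosing nonzero isotropic vectors $v_P \in F_P^n$ and $v_U \in F_U^n$ witnessing the given local isotropies.

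At each local branch of $\redcl{\wh X}$ --- a point where the closure of some $U \in \components{\wh X}$ meets some $P \in \points{\wh X}$ --- there should be (in the patching framework to be developed below) a corresponding field $F_\wp$ receiving both $F_P$ and $F_U$. Over $F_\wp$, both $v_P$ and $v_U$ are nonzero isotropic vectors for the regular form $q$ of dimension $\geq 3$. Witt's extension theorem then produces an element of $\Orth(q)(F_\wp)$ sending $v_P$ to $v_U$; by composing with a reflection in the hyperplane perpendicular to an anisotropic vector of $v_P^\perp$ (which exists because $\dim q \geq 3$ and $\cha F \neq 2$ force $v_P^\perp / \langle v_P \rangle$ to be regular of positive dimension), one can arrange this element to lie in $\SOrth(q)(F_\wp)$, yielding $\alpha_\wp \in \SOrth(q)(F_\wp)$ with $\alpha_\wp(v_P) = v_U$.

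Next, $\SOrth(q)$ is rational over $F$ (via the Cayley parametrization, valid since $\cha F \neq 2$). Applying the factorization theorem for rational linear algebraic groups of \cite{HHK}, one decomposes each $\alpha_\wp = \gamma_U \cdot \beta_P$ with $\gamma_U \in \SOrth(q)(F_U)$ and $\beta_P \in \SOrth(q)(F_P)$, chosen compatibly across all branches emanating from a given $P$ or $U$. Setting $\tilde v_P := \beta_P(v_P)$ and $\tilde v_U := \gamma_U^{-1}(v_U)$ then yields nonzero isotropic vectors that agree in $F_\wp^n$ at every branch. The patching theorem for finitely generated modules (Theorem~\ref{object patching}, to appear below) then descends the compatible family $(\tilde v_P, \tilde v_U)$ to a single nonzero vector $v \in F^n$ satisfying $q(v) = 0$, as desired.

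The main obstacle is the \emph{simultaneous} factorization: since each $P \in \points{\wh X}$ can lie on branches of several components $U$, one must select one $\beta_P$ per point and one $\gamma_U$ per component that together factor all of the $\alpha_\wp$ at once. This combinatorial enhancement of the basic factorization theorem --- and in particular the question of how far it persists when rationality is relaxed to retract rationality, as announced in the introduction --- is the technical heart of the argument.
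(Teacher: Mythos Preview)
Your proposal is correct and follows essentially the same route as the paper: both use that $\SOrth(q)$ is rational and acts transitively on the isotropic locus, invoke simultaneous factorization over the branches, and then descend via the intersection property. The only difference is packaging---the paper applies the general local-global principle for transitive $G$-schemes (Proposition~\ref{transitive rational points} via Corollary~\ref{local global reductive homogeneous}) to the projective quadric $\{q=0\}$, while you carry out that same argument by hand for isotropic vectors in $F^n$, supplying Witt's extension theorem where the paper cites Borel for transitivity.
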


The proof of this is given on page \pageref{quadratic proof}. We remark
that same proof may be used to give the result in the case $\cha(F) = 2$
and $q$ even as well. See also Theorem~\ref{quadratic exact} for a
related result.

Both of these principles in fact, may be regarded as special cases of
either of the following results, the main new results of this paper:

\begin{defn}\label{transitive defn}
Suppose $H$ is a variety over $F$ and $G$ is an algebraic group which
acts on $H$. We say that $G$ \textbf{acts transitively} on $H$ if for
every field extension $L/F$, the group $G(L)$ acts transitively on the
set $H(L)$.
\end{defn}

The following result generalizes \cite{HHK}, Theorem~3.7 by weakening the
hypothesis of rationality to allow for retract rational groups as well:
\begin{thm}[Local-global principle for varieties with transitive actions]\label{homogeneous local global}
Suppose $G$ is a connected retract rational algebraic group defined
over $F$, and $H$ is a variety on which $G$ acts transitively. Then
$H(F) \neq \emptyset$ if and only if $H(F_P), H(F_U) \neq \emptyset$
for all $P \in \points{\wh X}$ and $U \in \components{\wh X}$.
\end{thm}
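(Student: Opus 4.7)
The plan is to adapt the strategy of \cite{HHK}, Theorem~3.7, reducing the local-global statement for $H$ to a factorization theorem for $G$; the crucial new input is a factorization theorem that extends from rational to retract rational groups. The ``only if'' direction is immediate from the inclusions $F \hookrightarrow F_P, F_U$. For the converse, assume $H(F_P)$ and $H(F_U)$ are nonempty for every $P \in \points{\wh X}$ and $U \in \components{\wh X}$, and choose local points $h_P \in H(F_P)$ and $h_U \in H(F_U)$. For each pair $(P,U)$ with $P$ lying in the closure of $U$ there is a natural overlap field $F_\wp$ (indexed by the branches $\wp$ of $\redcl{\wh X}$ at $P$ on $U$) into which both $F_P$ and $F_U$ embed. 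Transitivity of the $G$-action on $F_\wp$-points (Definition~\ref{transitive defn}) produces an element $g_\wp \in G(F_\wp)$ satisfying $h_P = g_\wp \cdot h_U$. If each $g_\wp$ factors as $g_\wp = g_P \cdot g_U^{-1}$ with $g_P \in G(F_P)$ and $g_U \in G(F_U)$, then $g_P^{-1} \cdot h_P$ and $g_U \cdot h_U$ agree as $F_\wp$-points, and Theorem~\ref{object patching}, applied to the functor of points of $H$, glues them into an $F$-point of $H$.

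Everything therefore reduces to the following factorization statement for $G$: every $g \in G(F_\wp)$ admits a decomposition $g = g_P g_U$ with $g_P \in G(F_P)$ and $g_U \in G(F_U)$. In the rational case, \cite{HHK} prove this by choosing a birational equivalence $\varphi : \A^n \dashrightarrow G$ and transferring the matrix-style factorization available in $\GL_n$ over patch fields through $\varphi$. For retract rationality, we have only a pair of rational maps $\sigma : G \dashrightarrow V$ and $\pi : V \dashrightarrow G$ with $V \subseteq \A^n$ open and $\pi \circ \sigma$ equal to the identity on some open subset of $G$. The natural strategy is to take $\sigma(g) \in V(F_\wp)$, factor it on the affine side using the rational case, and apply $\pi$ to recover a factorization of $g$.

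The main obstacle is that $\sigma$ and $\pi$ are only rational maps, not group homomorphisms, so a factorization of $\sigma(g)$ does not mechanically produce one for $g$: the factors produced on the affine side need not lie in the domain of $\pi$, and even when they do, applying $\pi$ term by term will generally not give back $g$. Overcoming this will likely require a quantitative refinement of the rational factorization theorem in which the resulting factors may be forced to lie in arbitrarily small $t$-adic neighborhoods of prescribed approximations. Combined with left- or right-translation to arrange that $g$ is close to the identity, such a refinement allows one to run a Hensel-style successive approximation: each step produces an exact factorization of $\sigma$ applied to a current error term, and upon pushing forward by $\pi$ the errors shrink multiplicatively in $G$. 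Setting up this iteration precisely, and verifying that it converges within the patching framework to an honest factorization of $g$, is where the substantive new work lies.
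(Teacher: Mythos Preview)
Your overall strategy---reduce the local-global statement to a factorization theorem for $G$, then prove factorization using retract rationality---matches the paper's architecture. The reduction step is essentially Proposition~\ref{transitive rational points}, and the paper likewise cites the argument of \cite{HHK}, Theorem~3.7. However, there are two points where your proposal diverges from the paper, one a genuine gap and one a difference in method.

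\textbf{The gluing step.} You invoke Theorem~\ref{object patching} ``applied to the functor of points of $H$'' to glue the compatible local points into an $F$-point. This does not work: object patching applies to algebraic objects in the sense of Definition~3.2 (strict monoidal functors into finite-dimensional vector spaces), and the paper explicitly remarks that it is not known whether such patching extends to schemes. The paper's actual argument (Proposition~\ref{transitive rational points}) is more delicate: once the local points agree on all overlaps, one shows by connectedness of the closed fiber that they all have the same scheme-theoretic image $h \in H$, and then uses the intersection property of Lemma~\ref{intersection property} to produce a field homomorphism $\kappa(h) \to F$, hence an $F$-point. You should replace the appeal to object patching with this argument.

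\textbf{The factorization step.} Your proposed route---factor $\sigma(g)$ additively in $\mbb A^n$, push forward by $\pi$, and iterate a Hensel-style correction in $G$---is not what the paper does, and you correctly flag that making such an iteration converge is where the real difficulty lies. The paper avoids iterating in $G$ altogether. Instead it proves a ``standard position'' lemma (Lemma~\ref{standard retraction}) putting the retraction $Y \hookrightarrow U \subset \mbb A^n$ into a normal form, then shows (Corollary~\ref{local bijection corollary}) that projection onto the last $d = \dim Y$ coordinates gives a $t$-adic \emph{bijection} between a neighborhood of $0$ in $Y(F_0)$ and a disk in $\mbb A^d(F_0)$. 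Under this bijection the group multiplication on $Y$ transports to a rational map $\mu : \mbb A^d \times \mbb A^d \dra \mbb A^d$ whose linear part is $(x,y) \mapsto x+y$, and one applies the abstract factorization theorem of \cite{HHK} (Theorem~\ref{abstract small factorization} here) \emph{once}, directly in $\mbb A^d$. The successive approximation happens entirely inside that existing result, not in $G$. Your iteration-in-$G$ scheme may be salvageable, but as written it is a sketch of a different and harder argument; the paper's local-bijection trick sidesteps the obstacle you identified rather than powering through it.
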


This theorem follows quickly from Theorem~\ref{main theorem}, and its
proof may be found just after the statement of this theorem on page
\pageref{main theorem}. The proof of this in the
case of retract rationality will occupy a good portion of this paper.
Along the way, we will explore the connections between this local-global
principles and the notion of ``factorization'' for the group $G$. The
following corollary is particularly useful. 

\begin{cor} \label{local global reductive homogeneous}
Suppose $G$ is a retract rational reductive group over $F$ and $H$ a
projective homogeneous variety for $G$. Then $H(F) \neq \emptyset$ if and
only if $H(F_P), H(F_U) \neq \emptyset$ for all $P \in \points{\wh X}$ and
$U \in \components{\wh X}$.
\end{cor}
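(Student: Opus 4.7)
The plan is to deduce this as a direct consequence of Theorem~\ref{homogeneous local global}. The group $G$ in the corollary already satisfies the hypotheses of that theorem (it is connected and retract rational), so the only thing to verify is that the action of $G$ on the projective homogeneous variety $H$ is transitive in the strong sense of Definition~\ref{transitive defn}, namely that $G(L)$ acts transitively on $H(L)$ for every field extension $L/F$.

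To check this, fix $L \supseteq F$ and assume $H(L) \neq \emptyset$ (otherwise transitivity is vacuous). Pick $h \in H(L)$ and let $P = \mathrm{Stab}_G(h) \subseteq G_L$. Because $H$ is projective homogeneous for the connected reductive group $G$, the stabilizer $P$ is a parabolic subgroup of $G_L$ and the orbit map identifies $H_L \cong G_L / P$. The exact sequence of pointed sets associated to $1 \to P \to G_L \to G_L/P \to 1$ is
\[ G(L) \to H(L) \to H^1(L,P) \to H^1(L,G), \]
so the set of $G(L)$-orbits on $H(L)$ is in bijection with the image of the second map. Every such class dies in $H^1(L,G)$ by construction, so transitivity follows from the classical theorem (essentially Borel--Tits) that $H^1(L,P) \to H^1(L,G)$ is injective whenever $P$ is parabolic in a connected reductive group~$G$.

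Having verified the transitivity hypothesis, Theorem~\ref{homogeneous local global} applied to $G$ acting on $H$ yields exactly the statement of the corollary: $H(F) \neq \emptyset$ if and only if $H(F_P), H(F_U) \neq \emptyset$ for all $P \in \points{\wh X}$ and $U \in \components{\wh X}$. The only non-formal input is the cohomological injectivity statement just cited; everything else is a direct invocation of the previously established local-global principle.
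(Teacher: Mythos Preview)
Your proof is correct and follows the same overall strategy as the paper: verify that $G$ acts transitively on $H$ in the sense of Definition~\ref{transitive defn}, then invoke Theorem~\ref{homogeneous local global}. The paper's verification of transitivity is a one-line citation of \cite{Bo:LAG}, Theorem~20.9(iii), which directly gives that for a parabolic $k$-subgroup $P$ of a connected group $G$ the map $G(k)\to (G/P)(k)$ is surjective. You instead unpack this via the Galois-cohomological exact sequence and the injectivity of $H^1(L,P)\to H^1(L,G)$; this is a valid and standard reformulation (the injectivity is essentially the $k$-conjugacy of parabolics, part of the same Borel--Tits circle of ideas), but it is a longer path to the same destination. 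The paper's direct citation is crisper; your argument has the virtue of making the cohomological mechanism explicit.
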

\begin{proof}
This follows from the fact that the action of $G(F)$ on $H(F)$ is
transitive. This in turn in a consequence of \cite{Bo:LAG},
Theorem~20.9~(iii).
\end{proof}

From these theorems (or even the versions assuming only rationality of
$G$ from \cite{HHK}), we may prove the above local-global results
concerning the Brauer group and quadratic forms.

\begin{proof}[Proof of Theorem~\ref{brauer local global}]\label{brauerproof}
Let $\alpha \in \Br(F)$ and suppose $\alpha_{F_P} = 0$, $\alpha_{F_U} = 0$
for every $P \in \points{\wh X}$, $U \in \components{\wh X}$. We need to
show that $\alpha = 0$.

Let $A$ be a central simple $F$ algebra in the class of $\alpha$ and let
$H$ be the Severi-Brauer variety for $A$. Note that this is a
homogeneous variety for the group $GL(A)$ which is rational, connected
and reductive. Recall that for a field extension $L/F$, $H(L)$ is
nonempty exactly when $A \otimes_F L$ is a split algebra --- that is to
say, $\alpha_L = 0$. But since $\alpha_{F_P}, \alpha_{F_U} = 0$, we have
$H(F_P), H(F_U) \neq \emptyset$ for every $U, P$. Consequently, by
Corollary~\ref{local global reductive homogeneous}, it follows that
$H(F) \neq \emptyset$ and so $\alpha = 0$ as desired.
\end{proof}

\begin{proof}[Proof of Theorem~\ref{quadratic local
global}]\label{quadratic proof}
Let $q$ be a quadratic form over $F$ satisfying the hypotheses of the
Theorem. We wish to show that $q$ is isotropic. Let $H$ be the quadratic
hypersurface of projective space defined by the equation $q = 0$. Recall
that this is a homogeneous variety for the group $SO(q)$ which under the hypotheses is
a rational, connected,
reductive group (see \cite{BofInv}, page 209, excercise 9). As above, we immediately see that since
$H(F_P), H(F_U)$ are nonempty for each $P \in \points{\wh X}$ and $U \in
\components{\wh X}$, we have by Corollary~\ref{local global reductive
homogeneous}, $H(F) \neq \emptyset$ as desired.
\end{proof}

\section{Patching}

The fundamental idea of patching is that defining an algebraic object
over the field $F$ is equivalent to defining objects over each of the
fields $F_P$ for $P \in \points{\wh X}$ and $F_U$ for $U \in
\components{\wh X}$, together with the data of how these objects agree
on overlaps. This will be stated in this section in terms of an
equivalence of categories. We will simply cite the results of
\cite{HH:FP} section 6 and 7 for the most part, but we focus more on the 
equivalence of tensor categories, and explore how to produce other
examples of algebraic patching.

Suppose we are given a model $\wh X$ for a curve $X/K$.  Given a point
$P \in \points{\wh X}$, the height $1$ primes of $R_P$ which contain $t$
correspond to the components of $\redcl{\wh X}$ incident to $P$. Each
such component is the closure of a uniquely determined element $U \in
\components{\wh X}$. 

\begin{defn}[Branches, and their fields]
Given such a height $1$ prime $\mc P$ of $R_P$, corresponding to an
element $U \in \components{\wh X}$, a \textbf{branch} along $U$ at $P$ is an
irreducible component of the scheme $\wh R_P / \mc P \wh R_P$.
Alternately, these are in correspondence with the height one primes of
$\wh R_P$ containing $\mc P \wh R_P$. Given such a height $1$ prime
$\wp$, we let $\wh R_\wp$ be the $t$-adic completion of the localization
of $\wh R_P$ at $\wp$, and $F_\wp$ its field of fractions.  We let
$\branches{\wh X}$ denote the set of all branches at all points in
$\points{\wh X}$.
\end{defn}

The fields $F_P$ and $F_U$ come equipped with natural inclusions into
$F_\wp$ which we now describe.  We note that the natural inclusion $\wh
R_P \to \wh R_\wp$ induces an inclusion of fields $F_P \to F_\wp$.
Further, we note that $\wh R_\wp$ is a $1$ dimensional regular local ring,
and hence a DVR, whose valuation is determined by considering order of
vanishing along the branch corresponding to $\wp$. In particular,
considering the inclusion $F \subset F_P \subset F_\wp$, we find that all
the elements of $R_U$, can not have poles along any branch lying along
$U$, and in particular, we see we have an inclusion $R_U \subset \wh
R_\wp$. Since the $t$-adic topology on $\wh R_\wp$ is the same as the
$\wp$-adic topology, we further find that $\wh R_\wp$ is $t$-adically
complete, and we therefore have an induced inclusion $F_U \to F_\wp$.

\subsection{Patching finite dimensional vector spaces}

\begin{defn}[Patching problems] \label{patching problems}
A \textbf{patching problem} is a collection $V_\xi$ for $\xi \in
\points{\wh X} \cup \components{\wh X}$, where $V_\xi$ is a finite
dimensional $F_\xi$ vector space together with a collection of
isomorphisms $\phi_\wp : V_P \otimes_{F_P} F_\wp \to V_U \otimes_{F_U}
F_\wp$ of $F_\wp$ vector spaces for every branch $\wp$ at $P$ on $U$. We
denote this problem by $(V, \phi)$.

\noindent
We define a \textbf{morphism of patching problems} $f : (V, \phi) \to
(W, \psi)$ to be a collection of homomorphisms $f_\xi : V_\xi \to
W_\xi$ such that whenever $\wp$ is a branch at $P$ lying on $U$, the
following diagram commutes:
\[\xymatrix{
V_P \otimes_{F_P} F_\wp \ar[rr]^{f_P \otimes F_\wp} \ar[d]_{\phi_\wp} &
& W_P \otimes_{F_P} F_\wp \ar[d]^{\psi_\wp} \\
V_U \otimes_{F_U} F_\wp \ar[rr]_{f_U \otimes F_\wp} & & W_U
\otimes_{F_U} F_\wp
}\]
\end{defn}

We see then that patching problems naturally form a category, which we
denote by $\mc P \mc P(\wh X, S)$. In fact, this category has a
$\otimes$-structure as well defined by $(V, \phi) \otimes (W, \psi) =
(V \otimes W, \phi \otimes \psi)$ where $(V \otimes W)_\xi = V_\xi
\otimes_{F_\xi} W_\xi$ and
\[(\phi \otimes \psi)_\wp : (V \otimes W)_P \otimes_{F_P} F_\wp \to (V
\otimes W)_U \otimes_{F_U} F_\wp\] 
is given by $\phi_\wp \otimes_{F_\wp} \psi_\wp$ via the above
identification. One may also verify that this monoidal structure is
symmetric and closed (see \cite{MacL:Cat}, VII.7).

\begin{defn}
If $V$ is a vector space over $F$, we let $(\til V, \mbb I)$ denote the
patching problem defined by $\til V_P = V_{F_P}$ and $\til V_U =
V_{F_U}$ and where $\mbb I_\wp$ is induced by the natural
identifications \[(V \otimes_F F_P) \otimes_{F_P} F_\wp = V_\wp = (V
\otimes_F F_U) \otimes_{F_U} F_\wp \]
\end{defn}

\begin{thm}\label{finite patching}[\cite{HH:FP}, Theorem~6.4]
Consider the functor 
\[\Omega : \mc Vect_{\text{f.d.}}(F) \to \mc P \mc P(\wh X, S)\]
from the category of finite dimensional $F$-vector spaces to the
category of patching problems defined by sending a finite dimensional
vector space $V$ to the patching problem $(\til V, \mbb I)$. Then
$\Omega$ is an equivalence of categories.
\end{thm}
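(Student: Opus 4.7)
My plan is to produce an explicit quasi-inverse functor $\Psi$ by a gluing/equalizer construction, and then to reduce the proof that $\Omega$ and $\Psi$ are mutually inverse to a factorization theorem for $\GL_n$ over the fields $F_P$, $F_U$, $F_\wp$. For a patching problem $(V,\phi)$, set
\[
\Psi(V,\phi) \;:=\; \Bigl\{(v_\xi) \in \textstyle\prod_\xi V_\xi \;:\; \phi_\wp(v_P\otimes 1) = v_U\otimes 1 \text{ for every branch } \wp \text{ at } P \text{ on } U\Bigr\},
\]
an $F$-subspace of $\prod_\xi V_\xi$. Morphisms of patching problems restrict to $F$-linear maps between such equalizers, so $\Psi$ defines a functor to finite-dimensional $F$-vector spaces. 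Two natural transformations must be shown to be isomorphisms: the diagonal inclusion $\eta_W : W \hookrightarrow \Psi(\Omega(W))$, and the comparison $\epsilon_{(V,\phi)}$ whose $\xi$-component is the base-change map $\Psi(V,\phi)\otimes_F F_\xi \to V_\xi$.

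That $\eta$ is an isomorphism boils down to the equalizer identity
\[
F \;=\; \ker\!\Bigl(\textstyle\prod_\xi F_\xi \;\rightrightarrows\; \prod_\wp F_\wp\Bigr),
\]
which expresses a covering property of the collection $\{F_\xi\}$ for the branches and is proved in the form needed in \cite{HH:FP}; one then tensors with a basis of $W$ to obtain the general case. The content here is minor and essentially formal, given the construction of the fields recalled earlier in the section.

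The substantive step is showing $\epsilon$ is an isomorphism. The key input, established in \cite{HH:FP}, is the factorization theorem for $\GL_n$: at every branch $\wp$ at $P$ on $U$, every matrix $M \in \GL_n(F_\wp)$ admits a decomposition $M = M_P \cdot M_U$ with $M_P \in \GL_n(F_P)$ and $M_U \in \GL_n(F_U)$. Choosing $F_\xi$-bases for each $V_\xi$, the transitions $\phi_\wp$ become matrices $M_\wp \in \GL_n(F_\wp)$; factoring each $M_\wp$ and using the resulting factors to modify the chosen bases at the various $V_P$ and $V_U$ simultaneously, one reduces the entire patching problem to the case where every $\phi_\wp$ is the identity matrix. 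In that normalized case, matching tuples of basis vectors lie in $\Psi(V,\phi)$ and visibly exhibit each $V_\xi$ as $\Psi(V,\phi)\otimes_F F_\xi$, giving the required isomorphism.

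The principal obstacle is precisely the $\GL_n$-factorization itself, which is the technical core of \cite{HH:FP} and is the nontrivial portion of the argument. A secondary subtlety, to be handled by iterative normalization, is that a point $P$ may meet several components $U_1, \dots, U_r$, so that the basis change at $V_P$ is a single element of $\GL_n(F_P)$ required to normalize all branches through $P$ at once; one treats the branches sequentially, using that an adjustment by an element of $\GL_n(F_{U_j})$ affects only the branch along $U_j$ to bootstrap from the single-branch factorization statement. Once both ingredients are in place, verifying $\eta$ and $\epsilon$ are isomorphisms is the formal content outlined above.
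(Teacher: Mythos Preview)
The paper does not give its own proof of this theorem: it is stated with a citation to \cite{HH:FP}, Theorem~6.4, and used as a black box thereafter. So there is no ``paper's proof'' to compare against beyond the reference.

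Your outline is the standard one and is essentially how the result is proved in \cite{HH:FP}: the quasi-inverse is the equalizer, the unit $\eta$ is an isomorphism by the intersection property (which appears in this paper as Lemma~\ref{intersection property}, again cited from \cite{HH:FP}), and the counit $\epsilon$ reduces to simultaneous $\GL_n$-factorization over the patches. One comment on your ``secondary subtlety'': the sequential normalization you describe does not work as stated. It is true that adjusting the basis of $V_{U_j}$ affects only branches along $U_j$, but $U_j$ generally has branches at several points of $\points{\wh X}$, and conversely each such point may lie on several components. Thus a correction at one stage can undo normalizations achieved at earlier stages, and the single-branch factorization statement does not bootstrap to the simultaneous one by the naive induction you suggest. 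In \cite{HH:FP} (and in the present paper, via the corestriction reduction of Lemma~\ref{corestriction reduction} to $\mbb P^1_T$, where there is a single point, a single component, and a single branch) the simultaneous factorization is obtained by a different mechanism, not by iterating the two-field case. If you want to keep your sketch self-contained, you should either invoke the simultaneous form of factorization directly (as in Definition~\ref{factorization definition 1}) or explain a genuine induction on the dual graph of the closed fiber that controls this interference.
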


\subsection{Patching algebraic objects}

\begin{defn}
A \textbf{type of algebraic object} (generally abbreviated to simply a
``type'') is a symmetric closed monoidal category $\ms T$. If $\ms
T$ is a type and $L$ a field, then an algebraic object of type $\ms T$
over $L$ is a strict symmetric closed monoidal functor (see \cite{MacL:Cat}, \S VII.1, \S VII.7 and
\cite{Hov:MC} \S 4.1 for definitions) from the category $\ms T$ to the
category of finite dimensional vector spaces over $L$ (with its natural
symmetric closed monoidal structure).  Morphisms between algebraic
objects of type $\ms T$ are defined simply to be natural transformations
between functors. We let $\ms T(L)$ denote the category of such objects.
\end{defn}
Note that $\ms T$ in fact defines a (pseudo-)functor from the category
of fields to the $2$-category of categories (see \cite{Gray:2Cat} for definitions).

Despite the formality of this definition, one may observe that one may
interpret an algebraic object of a given type $\ms T$ to be given by a
vector space, or a collection of vector spaces, together with extra
structure encoded by perhaps a collection of morphisms between various
tensor powers of the vector spaces satisfying certain axioms, and where
morphisms between these objects are given by collections of linear maps
satisfying certain compatibilies with the extra structures given. For
example, we might consider:
\begin{itemize}
\item Lie algebras,
\item Alternative (or Jordan) algebras,
\item Operads,
\item Central simple algebras
\item Quadratic forms, where morphisms are isometries,
\item Quadratic forms, where morphisms are similarities,
\item Separable commutative or noncommutative algebras,
\item $G$-Galois extensions of rings in the sense of \cite{DeIn}
\item and so on...
\end{itemize}
In these cases, the category $\ms T$ in question is simply given as the
symmetric closed monoidal category generated by some set of objects
(corresponding to the underlying vector spaces of the structure) and
some morphisms (defining the structure of the algebra or form), such
that certain diagrams commute which define the structure in question.
For example, a central simple algebra is a vector space $A$ together
with a bilinear product $A \otimes A \to A$, and $F$-algebra structure
$F \to A$ such that the canonical ``sandwich map'' of algebras 
\[A \otimes A^{\text{op}} \to \Hom(A, A) \]
is an isomorphism (see \cite{DeIn}, Chapter~2, Theorem~3.4(iii)). In this case,
the category $\ms T$ is generated by a single element $a$, a morphism $a
\otimes a \to a$ and $\mbbm 1 \to a$ (where $\mbbm 1$ is the unit for the
monoidal structure), and such that the natural map $a \otimes a \to
\Hom(a, a)$ (where the $\Hom$ is defined by the closed structure) has an
inverse.

To see quadratic forms and isometries in this way, one may simply let the category $\ms T$ be generated by a
single element $v$ a morphism $v \otimes v \to \mbbm 1$, assumed to commute with the morphism switching the
order of the $v$'s. In the case of similarities instead of isometries, one may add a new object $\ell$, and
replace $v \otimes v \to \mbbm 1$ with a morphism $v \otimes v \to \ell$. To force $\ell$ to correspond to a
$1$-dimensional vector space, one may then add to this category an inverse to the natural morphism $\mbbm 1
\to \ell \otimes \ell^\ast \cong \Hom(\ell, \ell)$.

\begin{defn}[Patching problems] \label{object patching problems}
Let $\ms T$ be a type of algebraic object.  A \textbf{patching problem
of objects of type $\ms T$} is a collection $A_\xi$ for $\xi \in \points{\wh X} \cup
\components{\wh X}$, where $A_\xi$ is an object of type $\ms T$ over $F_\xi$,
together with a collection of isomorphisms $\phi_\wp : A_P
\otimes_{F_P} F_\wp \to A_U \otimes_{F_U} F_\wp$ in $\ms T(F_\wp)$.  We
denote this problem by $(A, \phi)$.
\end{defn}

Just as with vector spaces, we may define morphisms of patching
problems of objects of type $\ms T$, and again find that these form a
tensor category, which we denote $\mc P \mc P_{\ms T}(\wh X)$. Again
as before, if $A$ is an algebraic object of type $\ms T$ over $F$, we
may form a natural patching problem $(\til A, \mbb I)$, and obtain a
functor from $\ms T(F)$ to $\mc P \mc P_{\ms T}(\wh X)$.

\begin{thm}\label{object patching}
Consider the functor 
\[\Omega_{\ms T} : \ms T(F) \to \mc P \mc P_{\ms T}(\wh X)\] defined
by sending an algebraic object $A$ to the patching problem $(A, \mbb I)$. Then $\Omega_{\ms T}$ is an
equivalence of categories.
\end{thm}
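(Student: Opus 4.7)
The plan is to deduce this theorem from Theorem~\ref{finite patching} by reinterpreting algebraic objects of type $\ms T$ as strict symmetric closed monoidal functors and exploiting the fact that both $\mc Vect_{\text{f.d.}}(F)$ and $\mc P \mc P(\wh X, S)$ carry such structure.

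First, I would upgrade Theorem~\ref{finite patching} to an equivalence of symmetric closed monoidal categories. The tensor product on $\mc P \mc P(\wh X, S)$ is defined componentwise, so the canonical identifications $(V \otimes_F W) \otimes_F F_\xi \cong V_{F_\xi} \otimes_{F_\xi} W_{F_\xi}$ and the matching identification on $F_\wp$ make $\Omega$ strict symmetric monoidal. Preservation of the closed structure follows from the fact that the internal $\Hom$ of finite-dimensional vector spaces commutes with the (flat) base changes $F \to F_\xi$ and $F_\xi \to F_\wp$, so the internal $\Hom$ in $\mc P \mc P(\wh X, S)$ can be computed componentwise and compatibly with $\Omega$.

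Second, I would unwind both sides of $\Omega_{\ms T}$ from this monoidal perspective. A patching problem $(A, \phi)$ of objects of type $\ms T$ corresponds, by evaluation at each object $c$ of $\ms T$, to a strict symmetric closed monoidal functor $\til A : \ms T \to \mc P \mc P(\wh X, S)$: the $\xi$-component of $\til A(c)$ is $A_\xi(c)$, and the transition isomorphism on a branch $\wp$ is the component of $\phi_\wp$ at $c$. Morphisms of patching problems translate to natural transformations between such functors. Thus $\mc P \mc P_{\ms T}(\wh X)$ is naturally identified with the category of strict symmetric closed monoidal functors $\ms T \to \mc P \mc P(\wh X, S)$, while $\ms T(F)$ is by definition the analogous functor category valued in $\mc Vect_{\text{f.d.}}(F)$. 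Post-composition with the equivalence $\Omega$ of Step~1 then induces a functor between these two functor categories which, upon unwinding the identifications just described, is precisely $\Omega_{\ms T}$.

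The main obstacle will be handling the tension between strict and pseudo symmetric closed monoidal functors. Although $\Omega$ is strict, a quasi-inverse is in general only monoidal up to coherent natural isomorphism, so essential surjectivity of the induced functor on functor categories requires either (i) an appeal to strictification (see \cite{MacL:Cat}, VII.2) to replace a pseudo-monoidal functor $\ms T \to \mc Vect_{\text{f.d.}}(F)$ by an equivalent strict one, or (ii) the observation, implicit in the paper's discussion of pseudofunctoriality, that it is harmless to work with pseudo-monoidal functors throughout, since the resulting $2$-categories are $2$-equivalent. Secondary care must be taken in Step~1 with the closed structure, since verifying that the canonical maps $\Hom_F(V,W) \otimes_F F_\xi \to \Hom_{F_\xi}(V_{F_\xi}, W_{F_\xi})$ and the analogues over $F_\wp$ are isomorphisms relies on finite-dimensionality, which is built into the definition of a patching problem.
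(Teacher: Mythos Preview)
Your approach is essentially the same as the paper's: both identify $\ms T(F)$ and $\mc P\mc P_{\ms T}(\wh X)$ with categories of (symmetric closed monoidal) functors from $\ms T$ into $\mc Vect_{\text{f.d.}}(F)$ and $\mc P\mc P(\wh X)$ respectively, and then invoke the equivalence of Theorem~\ref{finite patching} to pass between them. The paper dispatches this in two lines, whereas you have spelled out the monoidal upgrade and flagged the strict-versus-pseudo subtlety that the paper suppresses; your version is more careful but not a different argument.
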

\begin{proof}
Since we have an equivalence of categories $\mc Vect_{\text{f.d.}}(F)
\cong \mc P\mc P(\wh X)$ by Theorem~\ref{finite patching}, it is
immediate that this equivalence also induces an equivalence of functor
categories 
\[\ms T(F) = Fun(\ms T, \mc Vect_{\text{f.d.}}(F) \cong Fun(\ms T, \mc
P\mc P(\wh X)) \cong \mc P\mc P_{\ms T}(\wh X).\]
One may now check that this gives the desired equivalence.
\end{proof}

\begin{rem}
It would be interesting to know if one could extend this to equivalences of other kinds of
objects. In particular, infinite dimensional vector spaces, finitely generated commutative algebras, or
perhaps even to (some suitably restricted) categories of schemes. None of these fall under the definition of
an algebraic object given above, and it is therefore not at all clear if the conclusions of
Theorem~\ref{object patching} will still hold.
\end{rem}

\subsection{Central simple algebras and quadratic forms}

For the following results, we suppose we are given $\wh X$ a normal,
connected, projective, finite $\mbb P^1_T$-scheme. The machinery of
patching gives the exactness of various exact sequences relating to
field invariants derived from algebraic objects, such as the Brauer group
$\Br(F)$ and the Witt group $W(F)$ of quadratic forms over $F$.

\begin{thm}[(see \cite{HH:FP}, Theorem~7.2)] \label{brauer exact}
We have an exact sequence:
\[ 0 \to \Br(F) \to \left( \prod\limits_{P \in \points{\wh X}} \Br(F_P)
\right) \times \left( \prod\limits_{U \in \components{\wh X}} \Br(F_U)
\right) \to \prod\limits_{\wp \in \branches{\wh X}}
\Br(F_\wp).\]
\end{thm}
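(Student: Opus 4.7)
The sequence in Theorem~\ref{brauer exact} requires three things: injectivity on the left, vanishing of the composition, and exactness at the middle. The first is precisely Theorem~\ref{brauer local global}, and the second is automatic since the restriction maps $\Br(F_P)\to \Br(F_\wp)$ and $\Br(F_U)\to \Br(F_\wp)$ agree on the image of $\Br(F)$ (both factor through $\Br(F)\to\Br(F_\wp)$). All the content therefore lies in exactness at the middle term, and the plan is to deduce it from Theorem~\ref{object patching} applied to the type $\ms T$ of central simple algebras described after Definition~\ref{object patching problems}.

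Given a family $(\alpha_P,\alpha_U)$ whose branch restrictions agree in $\Br(F_\wp)$ for every branch $\wp$ at $P$ on $U$, I would first choose central simple algebras $A_\xi/F_\xi$ representing each class $\alpha_\xi$. The branch hypothesis only asserts that $A_P\otimes_{F_P}F_\wp$ and $A_U\otimes_{F_U}F_\wp$ are Brauer equivalent, not isomorphic, which is insufficient data to assemble a patching problem of objects of type $\ms T$. To close this gap, I would use that $\points{\wh X}$ and $\components{\wh X}$ are finite, choose a positive integer $N$ divisible by $\deg(A_\xi)$ for every $\xi$, and replace each $A_\xi$ by the Brauer-equivalent algebra $M_{N/\deg(A_\xi)}(A_\xi)$. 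The enlarged algebras all have common $F_\xi$-dimension $N^2$; since two Brauer-equivalent central simple algebras of the same dimension are isomorphic, I may then pick, for each branch $\wp$, an actual $F_\wp$-algebra isomorphism $\phi_\wp : A_P\otimes_{F_P}F_\wp \xrightarrow{\sim} A_U\otimes_{F_U}F_\wp$.

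The pair $(A_\xi,\phi_\wp)$ is now a bona fide object of $\mc P\mc P_{\ms T}(\wh X)$. Applying the inverse of the equivalence $\Omega_{\ms T}$ from Theorem~\ref{object patching} yields a central simple $F$-algebra $A$ together with identifications $A\otimes_F F_\xi \cong A_\xi$ compatible with the $\phi_\wp$. Its Brauer class $\alpha=[A]\in\Br(F)$ then satisfies $\alpha|_{F_\xi}=[A_\xi]=\alpha_\xi$ for all $\xi$, which is exactly the lift we need.

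The principal obstacle is the passage from Brauer equivalence to isomorphism on each branch: one cannot patch Brauer classes directly, only algebras up to isomorphism in $\ms T(F_\wp)$. The matrix-enlargement step bridges this, and it is essential that $\points{\wh X}$ and $\components{\wh X}$ be finite so that a single common $N$ exists. Beyond that, the argument is a formal invocation of the tensor-categorical patching theorem; the verification that central simple algebras genuinely constitute a type $\ms T$ in the sense of the paper has already been carried out in the discussion of the sandwich map preceding Definition~\ref{object patching problems}.
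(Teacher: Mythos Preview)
Your proof is correct and follows essentially the same approach as the paper: cite Theorem~\ref{brauer local global} for injectivity, use finiteness of $\points{\wh X}\cup\components{\wh X}$ to represent all classes by central simple algebras of a common degree, upgrade Brauer equivalence on branches to genuine isomorphisms, and then invoke patching for central simple algebras (Theorem~\ref{object patching}) to produce the lift. Your write-up is in fact slightly more careful than the paper's, in that you explicitly address why the composite map vanishes and spell out the matrix-enlargement step.
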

\begin{proof}
Exactness on the left was noted in Theorem~\ref{brauer local global}. To
see exactness in the middle, suppose we have classes $\alpha_P, \alpha_U$
such that $(\alpha_U)_{F_\wp} \cong (\alpha_P)_{F_\wp}$ whenever $\wp$
is a branch at $P$ on $U$. Since there are only a finite number of
points and components, we may choose an integer $n$ such that each of
the Brauer classes $\alpha_U, \alpha_P$ may be represented by central
simple algebras $A_U, A_P$ of degree $n$. Now, by hypothesis, for each
branch $\wp$ as above, we may find an isomorphism of central simple
algebras $\phi_\wp : (A_P)_{F_\wp} \to (A_U)_{F_\wp}$. But this gives the
data of a patching problem for central simple algebras, and therefore we
may find a central simple $F$-algebra $A$ such that $A_{F_P} \cong A_P$
and $A_{F_U} \cong A_U$ as desired.
\end{proof}

\begin{thm} \label{quadratic exact}
We have an exact sequence:
\[ \W(F) \to \left( \prod\limits_{P \in \points{\wh X}} \W(F_P)
\right) \times \left( \prod\limits_{U \in \components{\wh X}} \W(F_U)
\right) \ \to \prod\limits_{\wp \in \branches{\wh X}}
\W(F_\wp)\]
\end{thm}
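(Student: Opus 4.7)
The plan is to mirror the proof of Theorem~\ref{brauer exact}: represent the given Witt classes by honest non-degenerate quadratic forms, bootstrap the Witt-class compatibility on branches to genuine isometries, and then invoke the patching equivalence of Theorem~\ref{object patching} for the type ``quadratic forms with isometries'' described in the list of examples preceding Definition~\ref{object patching problems}. The output will be a quadratic form $q$ over $F$ whose Witt class is the desired preimage.

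Concretely, suppose given classes $\alpha_P \in \W(F_P)$ and $\alpha_U \in \W(F_U)$ with $(\alpha_P)_{F_\wp} = (\alpha_U)_{F_\wp}$ in $\W(F_\wp)$ for every branch $\wp$ at $P$ on $U$. First I would choose non-degenerate representatives $q_P$ of $\alpha_P$ and $q_U$ of $\alpha_U$. Because $\points{\wh X}$ and $\components{\wh X}$ are finite, I can then adjoin orthogonal sums of hyperbolic planes to each $q_P$ and $q_U$ so that all of the resulting forms share a single common dimension $N$; this step does not alter the Witt classes. For each branch $\wp$ at $P$ on $U$, the base changes $q_P \otimes_{F_P} F_\wp$ and $q_U \otimes_{F_U} F_\wp$ now have equal dimension and equal Witt class over $F_\wp$, and Witt cancellation therefore supplies an actual isometry $\phi_\wp : q_P \otimes_{F_P} F_\wp \to q_U \otimes_{F_U} F_\wp$. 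The tuple $(q, \phi) = (\{q_P, q_U\}, \{\phi_\wp\})$ is then a patching problem of quadratic forms in the sense of Definition~\ref{object patching problems}, and Theorem~\ref{object patching} produces an $N$-dimensional non-degenerate quadratic form $q$ over $F$ with $q \otimes_F F_P \cong q_P$ and $q \otimes_F F_U \cong q_U$. The Witt class $[q] \in \W(F)$ then maps to $(\alpha_P) \times (\alpha_U)$ under the first map of the sequence, giving exactness at the middle term.

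The main obstacle is precisely the first step above: in contrast to the Brauer setting, where one could choose central simple algebras of a common degree representing the Brauer classes without losing information, Witt classes are defined only up to hyperbolic summands, so there is no immediate patching problem to form. The resolution is the double observation that (i) only finitely many patches and branches are involved, so a single common dimension $N$ for all local forms can be arranged simultaneously, and (ii) once dimensions match on each $F_\wp$, Witt cancellation upgrades Witt-class equality to isometry and thus yields legitimate patching data $\phi_\wp$. Witt cancellation is valid in characteristic different from $2$; in characteristic $2$ one restricts to even non-degenerate forms and uses the analogous cancellation statement, as in Theorem~\ref{quadratic local global}. One should also check that non-degeneracy descends under the patching equivalence, which is automatic from the categorical formulation, since non-degeneracy of a form is the requirement that its natural adjoint map be an isomorphism, a property preserved by the tensor equivalence of Theorem~\ref{object patching}.
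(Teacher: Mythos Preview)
Your proposal is correct and follows essentially the same approach as the paper: choose representatives of common dimension using finiteness of the index sets, upgrade Witt equivalence on each $F_\wp$ to an isometry via Witt cancellation, and then patch. Your write-up is in fact more detailed than the paper's (you spell out the hyperbolic-plane adjustment, the characteristic~$2$ caveat, and the descent of non-degeneracy), but the underlying argument is the same.
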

\begin{proof}
The proof is very similar to the last one.  Suppose we have Witt classes
$\alpha_P, \alpha_U$ such that $(\alpha_U)_{F_\wp} =
(\alpha_P)_{F_\wp}$ whenever $\wp$ is a branch at $P$ on $U$. Since
there are only a finite number of points and components, we may choose
an integer $n$ such that each of the Witt classes $\alpha_U, \alpha_P$
may be represented by quadratic forms $q_U, q_P$ of the same dimension $n$.
Now, by hypothesis and Witt's cancellation theorems, for each branch $\wp$
as above, we may find an isometry $\phi_\wp : (q_P)_{F_\wp} \to
(q_U)_{F_wp}$. But this gives the data of a patching problem for quadratic
forms, and therefore we may obtain a form $q$ over $F$ such that the class
$\alpha$ of $q$ in $\W(F)$ has the property that $\alpha_{F_P} = \alpha_P$
and $\alpha_{F_U} = \alpha_U$.
\end{proof}

We note that exactness on the left is discussed in Theorem~\ref{quadratic local global}.

\subsection{Properties of $\wh R_P, \wh R_U, F_P, F_U$}

Let us now gather together some fundamental facts which we will need in the
sequel.

\begin{lem}[\cite{HH:FP}, Lemma~6.2] \label{field tensors}
Suppose $\wh Y \to \wh X$ is a finite morphisms of projective, normal,
finite $\mbb P^1_T$-schemes. Then the natural inclusions of fields yield
isomorphisms:
\[ 
F_P \otimes_{\F(\wh X)} F(\wh Y) \cong \prod F_{P'} \ \ \ \
F_U \otimes_{\F(\wh X)} F(\wh Y) \cong \prod F_{U'} \ \ \ \
F_\wp \otimes_{\F(\wh X)} F(\wh Y) \cong \prod F_{\wp'}
\]
where $P'$ (resp. $U'$, $\wp'$) range over all the points (resp.
components, branches) lying over $P$ (resp. $U$, $\wp$).
\end{lem}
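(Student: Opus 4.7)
The plan is to treat all three isomorphisms uniformly, reducing each to the following commutative-algebra template: for a Noetherian ring $A$, an ideal $I \subset A$ contained in the Jacobson radical of a finite semilocal $A$-algebra $B$, the $I$-adic completion of $B$ is isomorphic to $\wh A \otimes_A B$ and decomposes as a product of the completions of the localizations of $B$ at its maximal ideals. I will take $A$ to be $R_P$, $R_U$, or the localization $(\wh R_P)_\wp$, with $I$ the maximal ideal $\mathfrak m_P$, the principal ideal $(t)$, or $\wp$ itself, respectively.

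The first step is to identify $F(\wh Y) = B \otimes_A F$, where $B$ is, in turn, $R_{\pi^{-1}(P)}$, $R_{\pi^{-1}(U)}$, or $(\wh R_P \otimes_{R_P} R_{\pi^{-1}(P)})_\wp$. This holds because $\wh Y \to \wh X$ is finite (so $B$ is a finite $A$-algebra) and every nonzero $b \in B$ becomes invertible after inverting a single nonzero element of $A$ via its integral equation over $A$. Using this, I rewrite
\[ F_\xi \otimes_F F(\wh Y) = F_\xi \otimes_A B = F_\xi \otimes_{\wh A} (\wh A \otimes_A B) = F_\xi \otimes_{\wh A} \wh B, \]
so the task is reduced to decomposing $\wh B$ as a product and passing to fraction fields.

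Next I would carry out the decomposition case by case. For closed points, $B$ is semilocal with maximal ideals in bijection with the points of $\pi^{-1}(P)$, and the classical structure theorem for $I$-adic completion of a semilocal ring gives $\wh B \cong \prod_{P' \mid P} \wh R_{P'}$. For components, the subscheme $\pi^{-1}(U) \subset \redcl{\wh Y}$ is disjoint from the singular locus, so its irreducible components $U'$ are also its connected components; the corresponding orthogonal idempotents in $B/(t)$ lift by Hensel's lemma through the $t$-adically complete ring $\wh B$ to give $\wh B \cong \prod_{U'} \wh R_{U'}$. For branches, the height-one primes of $\wh R_{P'} \otimes_{\wh R_P} (\wh R_P)_\wp$ lying above $\wp$ correspond bijectively to branches $\wp'$ of $\wh Y$ over $\wp$, and the same idempotent-lifting argument produces $\wh B \cong \prod_{\wp'} \wh R_{\wp'}$. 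Finally, base-changing back to $F_\xi$ over $\wh A$ inverts the required nonzero elements and turns each complete local factor into its fraction field, since each $\wh R_{P'}$, $\wh R_{U'}$, $\wh R_{\wp'}$ is a domain (by normality of $\wh Y$ and, in the branch case, the DVR structure).

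The principal obstacle is the Hensel-lifting step in the component and branch cases: unlike for closed points, the ideal being completed is not maximal in $B$, so one cannot invoke the semilocal structure theorem directly. Instead, one must verify that the geometric decomposition of $\pi^{-1}(U)$ (respectively, of the branches above $\wp$) matches the decomposition of $\Spec(B/I)$ into connected components. This matching rests precisely on the disjointness-from-the-singular-locus condition built into the definition of a component, together with the bijective correspondence between branches and height-one primes of $\wh R_P$ containing $t$, which together ensure that the algebraic and geometric decompositions agree.
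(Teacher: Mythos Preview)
The paper does not supply its own proof here; the lemma is stated with a bare citation to \cite{HH:FP}, Lemma~6.2. Your outline is the standard direct argument and is essentially how the cited reference proceeds: finiteness of $\pi$ makes $B$ a finite $A$-module, completion commutes with finite base change, and then one decomposes the completed ring via its idempotents and passes to fraction fields.

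One point deserves tightening. Your uniform template breaks in the branch case as written: with $A=(\wh R_P)_\wp$ one has $\fracf{A}=F_P$, not $F=\F(\wh X)$, so the identification $F(\wh Y)=B\otimes_A F$ does not typecheck, and indeed $B\otimes_A F_P$ is already $\prod_{P'}F_{P'}$, not $F(\wh Y)$. The clean repair is to bootstrap from the closed-point isomorphism you have just established: once $F_P\otimes_F F(\wh Y)\cong\prod_{P'}F_{P'}$ is in hand, tensor further with $F_\wp$ over $F_P$ and reduce to showing $F_\wp\otimes_{F_P}F_{P'}\cong\prod_{\wp'}F_{\wp'}$ for each $P'$, which is the classical decomposition of a finite extension under completion at a discrete valuation (the height-one primes $\wp'$ of $\wh R_{P'}$ lying over $\wp$ being exactly the branches at $P'$ over $\wp$). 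In the component case you should also say explicitly why the idempotent factor of $\wh B$ corresponding to $U'$ agrees with $\wh R_{U'}$ and not merely with some abstract complete ring; this amounts to checking that the natural map $B/tB\to\prod_{U'}R_{U'}/tR_{U'}$ is already an isomorphism, which holds because the $U'$ are disjoint opens covering $\pi^{-1}(U)$. With those adjustments your sketch goes through.
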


\begin{lem}[\cite{HH:FP}, Lemma~6.3] \label{intersection property}
Let $\wh X$ be a projective, normal, finite $\mbb P^1_T$-scheme. Then
the natural inclusions of fields yield an exact sequence of $F = \F(\wh
X)$-vector spaces:
\[ 0 \to F \to \left(\prod\limits_{P \in \points{\wh X}} F_P\right)
\times \left( \prod\limits_{U \in \components{\wh X}} F_U\right) \to
\prod\limits_{\wp \in \branches{\wh X}} F_\wp \]
\end{lem}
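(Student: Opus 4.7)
The plan is to reduce to the case $\wh X = \mbb P^1_T$, handle that base case by direct analysis, and then bootstrap to general $\wh X$ via Lemma~\ref{field tensors}.

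First, we treat the base case $\wh X = \mbb P^1_T$. Writing $E = K(x)$ for its function field, we have $\points{\mbb P^1_T} = \{\infty\}$, a single component $U = \mbb A^1_\fk$ in $\components{\mbb P^1_T}$, and a single branch $\wp$ along $U$ at $\infty$. The claim reduces to exactness of
\[ 0 \to E \to E_\infty \times E_U \to E_\wp, \]
i.e., to the assertion $E = E_\infty \cap E_U$ inside $E_\wp$. Injectivity of $E \hookrightarrow E_\infty$ is automatic. For the equalizer property, we use the explicit descriptions $\wh R_\infty \cong T[[1/x]]$ and $\wh R_U$ being the $t$-adic completion of $T[x]$, together with a Weierstrass-type factorization, to argue that an element represented both as a ratio in $T[[1/x]]$ and as a ratio in the $t$-adic completion of $T[x]$ must in fact be a ratio of polynomials in $K[x]$.

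Next, we bootstrap to a general model $\wh X$. The structure morphism $f : \wh X \to \mbb P^1_T$ is finite, inducing a finite field extension $E = K(x) \hookrightarrow F$. Tensoring the exact sequence for $\mbb P^1_T$ with $F$ over $E$ yields
\[ 0 \to F \to (E_\infty \otimes_E F) \times (E_U \otimes_E F) \to E_\wp \otimes_E F, \]
and this remains exact because $F/E$ is a field extension (hence flat) and because all products are finite so that tensor product distributes across them. Lemma~\ref{field tensors} now identifies $E_\infty \otimes_E F \cong \prod_{P \in \points{\wh X}} F_P$, $E_U \otimes_E F \cong \prod_{U' \in \components{\wh X}} F_{U'}$, and $E_\wp \otimes_E F \cong \prod_{\wp' \in \branches{\wh X}} F_{\wp'}$, since by construction $\points{\wh X}$, $\components{\wh X}$, and $\branches{\wh X}$ are precisely the points, components, and branches of $\wh X$ lying over those of $\mbb P^1_T$. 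Assembling these identifications produces the desired exact sequence.

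The main obstacle is the base case. The reduction step is essentially a formal consequence of flat base change combined with Lemma~\ref{field tensors}, but the base case requires a concrete analysis of the intersection $E_\infty \cap E_U$ inside $E_\wp$. Elements of these fields are ratios of elements of the corresponding completed local rings rather than pure series, so a nontrivial factorization tool (such as Weierstrass preparation in $T[[1/x]]$) is needed in order to exhibit compatible rational representatives for an element of the intersection and thereby place it in $E$.
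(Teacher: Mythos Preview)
The paper does not supply its own proof of this lemma; it is stated with a citation to \cite{HH:FP}, Lemma~6.3, and no argument is given here. So there is no in-paper proof to compare against.

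Your reduction step from a general $\wh X$ to $\mbb P^1_T$ is correct and is the standard route: tensor the base-case sequence with $F$ over $E$, use flatness of the field extension to preserve exactness, and then invoke Lemma~\ref{field tensors} to identify the tensored terms with the products over $\points{\wh X}$, $\components{\wh X}$, and $\branches{\wh X}$. This is exactly the mechanism by which \cite{HH:FP} passes from $\mbb P^1_T$ to an arbitrary model.

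The base case, however, is not actually proved in your proposal; you yourself flag it as the main obstacle and only gesture at Weierstrass preparation. That is a genuine gap. Concretely, you need to show that if $\alpha \in E_\wp$ lies in both $E_\infty = \frc(T[[1/x]])$ and $E_U = \frc(\widehat{T[x]})$, then $\alpha \in K(x)$. Writing $\alpha$ as a ratio in each ring and invoking Weierstrass preparation in $T[[1/x]]$ lets you replace the $E_\infty$-denominator by a polynomial in $1/x$, but you still must reconcile this with the $E_U$ representation, and the passage from ``ratio of completed-ring elements'' to ``ratio of honest polynomials'' is where the real content lies. In \cite{HH:FP} this is handled by an explicit argument that ultimately rests on the identity $\wh R_\infty \cap \wh R_U = R_{\mbb P^1_T}$ (intersection inside $\wh R_\wp$) together with control of denominators; your sketch does not supply this, so as written the proposal is a correct outline with the key computation missing.
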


\begin{lem} \label{complete sum}
Let $\m V, \m W \subset \m U$ be $t$-adically complete $T$-modules, and
suppose that $\m V/t \m V + \m W/t \m W = \m U/t \m U$. Then $\m V + \m W
= \m U$.
\end{lem}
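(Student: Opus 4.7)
The plan is to use a successive approximation (Hensel-style) argument built on the hypothesis modulo $t$, with $t$-adic completeness of $\mf V$ and $\mf W$ producing actual elements in the limit.

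Given $u \in \mf U$, I would first apply the hypothesis $\mf V/t\mf V + \mf W/t\mf W = \mf U/t\mf U$ to produce $v_0 \in \mf V$ and $w_0 \in \mf W$ with $u - v_0 - w_0 \in t\mf U$, say $u - v_0 - w_0 = t u_1$ with $u_1 \in \mf U$. Then inductively, having produced $v_0, \dots, v_n \in \mf V$, $w_0, \dots, w_n \in \mf W$, and $u_{n+1} \in \mf U$ with
\[ u - \sum_{i=0}^{n} t^i v_i - \sum_{i=0}^n t^i w_i = t^{n+1} u_{n+1}, \]
I would apply the hypothesis to $u_{n+1}$ to obtain $v_{n+1} \in \mf V$, $w_{n+1} \in \mf W$, and $u_{n+2} \in \mf U$ with $u_{n+1} - v_{n+1} - w_{n+1} = t u_{n+2}$, and then multiply by $t^{n+1}$ to continue the induction.

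The next step is to assemble the formal series $v = \sum_{i \geq 0} t^i v_i$ and $w = \sum_{i \geq 0} t^i w_i$. The partial sums are Cauchy in the $t$-adic topology of $\mf V$ and $\mf W$ respectively, so by the assumed $t$-adic completeness of these submodules they converge to genuine elements $v \in \mf V$ and $w \in \mf W$. Passing to the limit in the identity above yields $u = v + w$, which proves $\mf U \subset \mf V + \mf W$ and hence equality.

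The main subtlety — and really the only nonformal point — is the step where the series $\sum t^i v_i$ is required to converge in $\mf V$ itself, not merely in $\mf U$. This is why the hypothesis demands $t$-adic completeness of each summand separately rather than just that of $\mf U$; without it the limit might only exist in $\mf U$, giving no reason for $v$ to lie in $\mf V$. All remaining content is bookkeeping on indices.
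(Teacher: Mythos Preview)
Your argument is correct and is essentially the same successive-approximation proof the paper gives: the paper phrases it in terms of the partial sums $v_i, w_i$ (with $v_{i+1} - v_i \in t^i\mf V$, etc.) rather than the individual terms $t^i v_i$, but the content is identical, including the appeal to $t$-adic completeness of $\mf V$ and $\mf W$ separately to make the limits land in the right submodules.
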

\begin{proof}
Suppose $u \in \m U$. Let $v_0 = w_0 = 0$. We will inductively
construct a sequence of elements $v_i \in \m V$, $w_i \in \m W$ such
that $v_i - v_{i+1} \in t^i\m V, w_i - w_{i+1} \in t^i\m W, v_i + w_i -
u \in t^i \m U$. By completeness, these will converge to elements $v
\in \m V, w \in \m W$ such that $v + w = u$.

Suppose we have constructed $v_i, w_i$ satisfying the above hypotheses.
Since $u - v_i - w_i \in t^i \m U$, we may write $u - v_i - w_i = t^i
r$. By hypothesis, we may write $r = v' + w' + tr'$ for some $v' \in \m
V, w' \in \m W, r' \in \m U$. Setting $v_{i+1} = v_i + t^i v',
w_{i+1} = w_i + t^i w'$, completes the inductive step.
\end{proof}

\begin{lem} \label{sum for P1}
Considering $\mbb P^1_T$, we have $\wh R_{\mbb A^1} + \wh R_{\infty} =
\wh R_\wp$, where $\wp$ is the unique branch at $\infty$.
\end{lem}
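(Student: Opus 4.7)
The plan is to reduce the identity modulo $t$ via Lemma~\ref{complete sum} and then verify the resulting statement by an elementary Laurent-series decomposition on the closed fiber of $\P^1_T$.

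First I would identify the three rings explicitly. Let $x$ be the standard coordinate on $\A^1_T = \Spec T[x]$ and let $y = 1/x$ be the local coordinate at $\infty$. Then $R_\infty = T[y]_{(y,t)}$ and $\wh R_\infty = T[[y]]$; its reduction modulo $t$ is $k[[y]]$, a domain, confirming that there is a unique branch $\wp$ at $\infty$ along $\A^1$, corresponding to the prime $(t) \subset T[[y]]$. Consequently $\wh R_\wp$ is the $t$-adic completion of the DVR $T[[y]]_{(t)}$, which is a complete DVR with uniformizer $t$ and residue field $\Quo(k[[y]]) = k((y))$; hence $\wh R_\wp / t\wh R_\wp = k((y))$. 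On the other side, $R_{\A^1}$ is obtained from $T[x]$ by inverting the elements whose reduction modulo $t$ is a unit, so $\wh R_{\A^1} / t\wh R_{\A^1} = R_{\A^1}/tR_{\A^1} = k[x]$. Under the canonical inclusion $\wh R_{\A^1} \hookrightarrow \wh R_\wp$, the element $x$ reduces to $y^{-1}$ (since $xy = 1$), so the induced map on reductions embeds $k[x]$ into $k((y))$ as the subring $k[y^{-1}]$; similarly $\wh R_\infty / t\wh R_\infty = k[[y]]$ sits in $k((y))$ in the obvious way.

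Next I would verify that the images of $\wh R_{\A^1}$ and $\wh R_\infty$ in $\wh R_\wp / t\wh R_\wp = k((y))$ together span it, that is, $k[y^{-1}] + k[[y]] = k((y))$. This is immediate: a Laurent series $\sum_{i \geq -N} a_i y^i$ splits into its polar part $\sum_{i < 0} a_i y^i \in k[y^{-1}]$ plus its holomorphic part $\sum_{i \geq 0} a_i y^i \in k[[y]]$. Together with the facts that $\wh R_{\A^1}$ is $t$-adically complete by construction and that $\wh R_\infty = T[[y]]$ is $t$-adically complete because $T$ is, Lemma~\ref{complete sum}, applied with $\m U = \wh R_\wp$, $\m V = \wh R_{\A^1}$, and $\m W = \wh R_\infty$, then yields the desired equality.

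The main obstacle I foresee is bookkeeping rather than substance: one must carefully unwind the inclusions $\wh R_{\A^1}, \wh R_\infty \hookrightarrow \wh R_\wp$ recalled earlier in the section to be sure that the induced maps on $t$-reductions really do identify with the stated embeddings $k[x] \hookrightarrow k((y))$ and $k[[y]] \hookrightarrow k((y))$, and to check that the completeness hypothesis of Lemma~\ref{complete sum} is satisfied. Once that is pinned down, the proof is entirely the elementary Laurent decomposition on the closed fiber together with the completeness lemma.
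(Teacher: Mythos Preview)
Your proposal is correct and follows essentially the same route as the paper: reduce modulo $t$ via Lemma~\ref{complete sum}, identify the reductions as $k[x]$, $k[[x^{-1}]]$, and $k((x^{-1}))$ (you write $y$ for $x^{-1}$), and conclude by the obvious Laurent decomposition. Your write-up is in fact a bit more careful than the paper's about pinning down the inclusions and the completeness hypotheses, but the argument is the same.
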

\begin{proof}
Using Lemma~\ref{complete sum}, we need only check that $\ov R_{\mbb
A^1} + \ov R_{\infty} = \ov R_\wp$, where
\[
\ov R_{\mbb A^1} \cong \wh R_{\mbb A^1}/ t \wh R_{\mbb A^1}, \ \ \
\ov R_{\infty} \cong \wh R_{\infty}/ t \wh R_{\infty}, \ \ \
\ov R_{\wp} \cong \wh R_{\wp}/ t \wh R_{\wp}
\]
But, we may compute $\ov R_{\mbb A^1} = \fk[\mbb A^1_{\fk}]$, 
$\ov R_\infty = 
\wh{\mc O_{\mbb P^1_{\fk, \infty}}}$, 
$\ov R_\wp = \fracf{\wh{\mc O_{\mbb P^1_{\fk, \infty}}}}$.
Writing $x$ for the coordinate function on the affine part of the
$\fk$-line, we may explicitly identify
\[\ov R_{\mbb A^1} = k[x], \ \ \ \ov R_\infty = k[[x^{-1}]], \ \ \
\ov R_\wp = k((x^{-1})), \]
and the result follows.
\end{proof}

\section{Local-global principles, factorization and patching}

Let $\wh X \to \mbb P^1$ be a model for $X/K$, and let $G$ be an
algebraic group defined over $F$.

\subsection{Local-global principles for rational points}

\begin{defn} \label{factorization definition 1}
We say that \textbf{factorization holds for} $G$, with respect to $\wh X$, if for
every tuple $(g_\wp)_{\wp \in \branches{\wh X}}$, there exist
collections of elements $g_P$ for each $P \in \points{\wh X}$ and $g_U$
for each $U \in \components{\wh X}$ such that whenever $\wp$ is a branch
at $P$ on $U$ we have
\[g_\wp = g_P g_U\]
with respect to the natural embeddings $F_P, F_U \to F_\wp$.
\end{defn}

\begin{defn} \label{local-global defn rational points}
We say that \textbf{the local-global principle holds} for an $F$ scheme $V$, with
respect to a model $\wh X$ if $X(F) \neq \emptyset$ holds if and only if
$X(F_P), X(F_U) \neq \emptyset$ for every $P \in \points{\wh X}$ and $U
\in \components{\wh X}$.
\end{defn}

\begin{defn}
Let $G$ be an algebraic group over $F$ and $H$ a scheme over $F$.  We
say that $H$ is a \textbf{transitive} $G$-scheme if $G$ acts transitively on
$H$ (see Definition~\ref{transitive defn}).
\end{defn}

\begin{prop} \label{transitive rational points}
If factorization holds for a group $G$, then the local-global principle
holds for all transitive schemes over $G$.
\end{prop}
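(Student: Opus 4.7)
The plan is to use the transitive $G$-action to bridge the local data, invoke factorization to convert this bridge into a pair of local modifiers (one on each side of every branch), and finally descend the resulting compatible family to $F$ via the intersection property of Lemma~\ref{intersection property}. The ``only if'' direction is trivial, so I focus on constructing an $F$-point of $H$ from a system of local ones.

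First, by hypothesis I pick $h_P \in H(F_P)$ for each $P \in \points{\wh X}$ and $h_U \in H(F_U)$ for each $U \in \components{\wh X}$. For every branch $\wp$ at $P$ on $U$, transitivity of $G$ on $H$ over the field $F_\wp$ yields an element $g_\wp \in G(F_\wp)$ with $g_\wp \cdot (h_P)_{F_\wp} = (h_U)_{F_\wp}$. Now apply the factorization hypothesis to the tuple $(g_\wp^{-1})_\wp$: this produces $g_P \in G(F_P)$ for each $P$ and $g_U \in G(F_U)$ for each $U$ such that $g_\wp^{-1} = g_P g_U$, i.e. $g_\wp = g_U^{-1} g_P^{-1}$, whenever $\wp$ is at $P$ on $U$. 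Defining modified local points
\[ h_P' := g_P^{-1} \cdot h_P \in H(F_P), \qquad h_U' := g_U \cdot h_U \in H(F_U), \]
a direct computation gives
\[ (h_U')_{F_\wp} = g_U \cdot (h_U)_{F_\wp} = g_U g_\wp \cdot (h_P)_{F_\wp} = g_P^{-1} \cdot (h_P)_{F_\wp} = (h_P')_{F_\wp}, \]
so the modified family agrees on every branch.

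It remains to package the compatible system $\{h_P', h_U'\}$ into a single point $h \in H(F)$. The central tool is Lemma~\ref{intersection property}, which identifies $F$ with the kernel of the natural difference map from $\bigl(\prod_P F_P\bigr) \times \bigl(\prod_U F_U\bigr)$ to $\prod_\wp F_\wp$. When $H$ is affine, the $h_\xi'$ correspond to $F_\xi$-algebra maps from the coordinate ring $\OO(H)$ to the various $F_\xi$ which coincide after base change to each $F_\wp$; applying Lemma~\ref{intersection property} coordinate-by-coordinate produces an $F$-algebra map $\OO(H) \to F$, i.e. an $F$-point of $H$. For general (quasi-projective) $H$, the principal technical obstacle is ensuring that the finitely many $h_\xi'$ all land inside a common affine open of $H$ so that the affine argument applies; I would handle this using the finiteness of $\points{\wh X} \cup \components{\wh X}$ together with the quasi-projectivity of $H$ to produce such a common affine neighborhood, at which point Lemma~\ref{intersection property} concludes the argument. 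The transitivity-plus-factorization step is essentially formal; the real labor is in this last descent step.
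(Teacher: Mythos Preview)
Your argument through the modification step is exactly the paper's: produce $g_\wp$ from transitivity, factor, and adjust the local points so they agree on every branch. The descent step is where you diverge. The statement does not assume $H$ is quasi-projective, so your appeal to ``finitely many points lie in a common affine open'' is an extra hypothesis. The paper avoids this by observing that, since the reduced closed fiber $\redcl{\wh X}$ is connected, any two of the modified points $h_P', h_U'$ are linked by a chain of branches along which their scheme-theoretic images in $H$ are forced to coincide; hence \emph{all} the $h_\xi'$ land on a single point $h \in H$. One then applies Lemma~\ref{intersection property} to the residue field $\kappa(h)$ to obtain a field map $\kappa(h) \to F$, i.e.\ an $F$-point of $H$. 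Your affine-coordinate-ring version of the descent is correct and is really the same mechanism, but the connectedness observation is what lets it go through for an arbitrary $H$ without the quasi-projectivity crutch.
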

\begin{proof}
We essentially follow the proof of Theorem~{3.7} in \cite{HHK}. Suppose
have a group $G$ such that factorization holds for $G$, and a transitive
$G$-scheme $H$. Suppose we are given points $x_P \in H(F_P)$ and $x_U \in
H(F_U)$ for all $P$ and $U$. We will show that $H(F) \neq \emptyset$.

By transitivity of the action, whenever $\wp$ is a branch at $P$ on $U$,
we may find an element $g_\wp \in G(F_\wp)$ such that $g_\wp(x_P)_{F_wp} =
(x_U)_{F_\wp}$. By hypotheses, we may find elements $g_P \in G(F_P)$ and
$g_U \in G(F_U)$ for every $P$ and $U$ such that $g_\wp = g_P g_U$
whenever $\wp$ is a branch at $P$ on $U$. In particular, by replacing
$x_P$ by $g_P^{-1} x_P$ and $x_U$ by $g_U x_U$ we may assume that our
points satisfy $(x_P)_{F_\wp} = (x_U)_{F_\wp}$.

Now, consider these points as morphisms 
\[x_P : \Spec(F_P) \to H, \ \  x_U : \Spec(F_U) \to H, \ \ x_\wp :
\Spec(F_\wp) \to H\]
where $x_\wp$ is the composition of either $x_P$ or $x_U$ with the
respective maps $\Spec(F_\wp) \to \Spec(F_P), \Spec(F_U)$. We claim that
the scheme theoretic image of these maps consists of the same point in
$H$, for all $P$, $U$, and $\wp$. To see this, note that if $\wp$ is a
branch at $P$ on $U$, then the commutativity of the diagram
\[\xymatrix @R=1pc{
& \Spec(F_P) \ar[dr]^{x_P} \\
\Spec(F_\wp) \ar[rr]^{x_\wp} \ar[rd] \ar[ru] & & H \\
& \Spec(F_U) \ar[ur]_{x_U}
}\]
shows that the image of each of the morphisms $x_U$, $x_P$, $x_\wp$ are
the same. But since the closed fiber $\redcl{\wh X}$ is connected, it
follows that we may inductively show the image of all the morphisms
corresponding to points, components or branches must coincide.
Let $\kappa$ be the residue field of this image point $h \in H$. Then we
have field maps
\[\xymatrix @R=.5pc{
& F_P \ar[ld] \\
F_\wp & & \kappa  \ar[lu] \ar[ld] \\
& F_U \ar[lu]
}\]
Using Lemma~\ref{intersection property}, we find that we obtain a map
$\kappa \to F$ which one may check must be a homomorphism of fields.
Therefore, we obtain a morphism $\Spec(F) \to H$ mapping onto the point
$h$ and so $H(F) \neq \emptyset$ as desired.
\end{proof}

\subsection{Local-global principles for algebraic objects and torsors}

\begin{defn}
We say that \textbf{the local-global principle holds for an algebraic object}
$A$ (of some given type) if for any algebraic object $B$ (of the same
type), we have $A \cong B$ if and only if $A_{F_P} \cong B_{F_P}$ and
$A_{F_U} \cong B_{F_U}$ for all $P, U$.  We say that the local-global
principle holds for a particular type of algebraic object if it holds for
all algebraic objects of this type.
\end{defn}

\begin{prop}
The local-global principle holds for an algebraic object $A'$ if and
only for every patching problem $(A, \phi)$ of algebraic objects such
that $A_P \cong (A')_{F_P}, A_U \cong (A')_{F_U}$ for all $P, U$, the
isomorphism class of $(A, \phi)$ is independent of $\phi$.
\end{prop}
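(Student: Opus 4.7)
The plan is to reduce both directions to Theorem~\ref{object patching}, which identifies isomorphism classes of algebraic objects of type $\ms T$ over $F$ with isomorphism classes of patching problems. The statement to prove is essentially a reformulation of that equivalence: it just says that the LGP for $A'$ is measuring when every patching problem that could a priori come from $A'$ must actually be isomorphic to the canonical one.

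For the forward direction, suppose the local-global principle holds for $A'$, and let $(A,\phi)$ and $(A,\phi')$ be two patching problems whose local pieces satisfy $A_P\cong A'_{F_P}$ and $A_U\cong A'_{F_U}$. By the essential surjectivity of $\Omega_{\ms T}$ in Theorem~\ref{object patching}, these patching problems come (up to isomorphism) from algebraic objects $B$ and $B'$ over $F$, i.e.\ $(\tilde B,\mbb I)\cong (A,\phi)$ and $(\tilde{B'},\mbb I)\cong (A,\phi')$. From these isomorphisms we read off $B_{F_P}\cong A_P\cong A'_{F_P}$ and similarly for $U$, and likewise for $B'$; the LGP hypothesis then yields $B\cong A'\cong B'$, and applying $\Omega_{\ms T}$ again shows $(A,\phi)\cong (A,\phi')$.

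For the converse, suppose the independence condition holds and take any algebraic object $B$ over $F$ with $B_{F_P}\cong A'_{F_P}$ and $B_{F_U}\cong A'_{F_U}$ for all $P,U$. Consider the two canonical patching problems $(\tilde B,\mbb I)$ and $(\tilde{A'},\mbb I)$. Choose local isomorphisms $\alpha_\xi\colon \tilde B_\xi=B_{F_\xi}\xrightarrow{\sim} A'_{F_\xi}=\tilde{A'}_\xi$ and use them to transport $(\tilde B,\mbb I)$ to a patching problem of the form $(\tilde{A'},\phi)$, where $\phi_\wp$ is the conjugate of $\mbb I_\wp$ by the appropriate $\alpha$'s after base change. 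By construction $(\tilde B,\mbb I)\cong (\tilde{A'},\phi)$ as patching problems, and both $(\tilde{A'},\phi)$ and $(\tilde{A'},\mbb I)$ have local pieces equal to $A'_{F_\xi}$, so the hypothesis forces $(\tilde{A'},\phi)\cong (\tilde{A'},\mbb I)$. Chaining these isomorphisms and applying Theorem~\ref{object patching} gives $B\cong A'$, which is the local-global principle for $A'$.

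The only subtlety is the bookkeeping in the converse: one must verify that the transport step really does yield a well-defined morphism of patching problems. This is routine once one remembers that the morphisms in $\mc P\mc P_{\ms T}(\wh X)$ are tuples $(f_\xi)$ making the square in Definition~\ref{object patching problems} commute, so conjugating each $\mbb I_\wp$ by $\alpha_P\otimes F_\wp$ and $\alpha_U\otimes F_\wp$ produces exactly the right data. I don't see a real obstacle here beyond this unpacking; the content of the proposition is entirely absorbed into the equivalence of Theorem~\ref{object patching}.
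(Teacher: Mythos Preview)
Your proof is correct and follows essentially the same approach as the paper: both directions reduce to the equivalence of Theorem~\ref{object patching}, using essential surjectivity to descend to $F$-objects in the forward direction and transporting one patching problem onto the other via chosen local isomorphisms in the converse. Your write-up is slightly more explicit about the conjugation step producing the new gluing data $\phi$, but the argument is the same.
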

\begin{proof}
Suppose that the local-global principle holds for $A'$, and let $(A, \phi), (A,
\psi)$ be two patching problems, such that $A_P \cong (A')_{F_P}$ and
$A_U \cong (A')_{F_U}$ for each $P, U$. Since we may patch algebraic
objects, we may find algebraic objects $B_1, B_2$ over $F$ whose patching
problems are equivalent to $(A, \phi), (A, \psi)$ respectively. Since
$(B_1)_{F_U} \cong A_{F_U} \cong (B_2)_{F_U}$ and similarly for $F_P$, we find
that by the local-global principle, $B_1 \cong B_2$. Therefore their
associated patching problems are isomorphic, implying $(A, \phi) \cong (A,
\psi)$ as desired.

Conversely, suppose that $(A, \phi)$'s isomorphism class is independent
of $\phi$ for every patching problem. Suppose we are given $A', B'$
be algebraic objects over $F$ with associated patching problems $(A,
\phi)$ and $(B, \psi)$ respectively. Suppose further that $(A')_{F_U}
\cong (B')_{F_U}$ and similarly for $F_P$.  Since $A_U \cong (A')_{F_U}
\cong (B')_{F_U} \cong B_U$ and $A_P \cong (A')_{F_P} \cong (B')_{F_P}
\cong B_P$ for all $U, P$ by definition, we may change $\psi$ via these
isomorphisms to find $(B, \psi) \cong (A, \psi')$ for some $\psi'$. But
therefore by hypothesis, $(A, \phi) \cong (A, \psi') \cong (B, \psi)$.
Since patching gives an equivalence of categories, we further conclude
$A' \cong B'$, completing the proof.
\end{proof}

\begin{rem}
Let $\ms T$ be a type of algebraic object, and $A$ is a particular object
of type $\ms T$. Let $\ms T_A$ denote the subclass of objects which are
isomorphic to $A$ (more precisely, $\ms T_A$ is the sub-pseudofunctor of
$\ms T$ which associates to every field extension $L/F$ the category of
algebraic objects of type $\ms T$ over $L$ which are isomorphic to the
object $A_L$). Then $\ms T_A$ satisfies the hypotheses of patching ---
i.e. we have an equivalence of categories between the category $\mc P\mc
P_{\ms T_A}(\wh X)$ and $\ms T_A(F)$  --- if and only if the
local-global principle holds for $A$. Note that in general $\ms T_A$ is
not a ``type of algebraic object,'' described by some monoidal category
in the sense described above.
\end{rem}

\begin{defn}
Let $G$ be an algebraic group over $F$. We say that \textbf{the local-global
principle holds for} $G$ if for $\alpha \in H^1(F, G)$, with
$\alpha_{F_P}, \alpha_{F_U}$ trivial for each $P$, $U$, we have $\alpha$
trivial.
\end{defn}

Note that since elements of $H^1(F, G)$ correspond to torsors for $G$, we
see immediately that the local-global principle will hold for $G$ if and
only if the local-global principle holds for all $G$-torsors, in the sense
of Definition~\ref{local-global defn rational points}. Since $G$-torsors
are transitive $G$-schemes, from Proposition~\ref{transitive rational
points}, we immediately obtain:

\begin{prop} \label{local global torsors}
Suppose $G$ is a linear algebraic group defined over $F$, and suppose
that factorization holds for $G$ with respect to $\wh X$. Then the local
global principle holds for $G$.
\end{prop}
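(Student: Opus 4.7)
The plan is to use the standard correspondence between cohomology classes in $H^1(F, G)$ and $G$-torsors, reducing the statement about $G$ to a statement about schemes on which $G$ acts, so that Proposition~\ref{transitive rational points} applies directly. The remark preceding the statement already hints that this is exactly the intended route: once one knows that $G$-torsors are transitive $G$-schemes, everything follows essentially by unwinding definitions.

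First, I would begin by fixing a class $\alpha \in H^1(F, G)$ such that $\alpha_{F_P}$ and $\alpha_{F_U}$ are trivial for every $P \in \points{\wh X}$ and $U \in \components{\wh X}$, and let $H$ denote a $G$-torsor representing $\alpha$. The fact that $\alpha_{F_\xi}$ is trivial for $\xi \in \points{\wh X} \cup \components{\wh X}$ is equivalent to saying $H(F_\xi) \neq \emptyset$, by the standard translation between $H^1$ and torsors. Thus the hypotheses of Definition~\ref{local-global defn rational points} are satisfied for the scheme $H$.

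Next, I would observe that $H$, being a $G$-torsor, is a transitive $G$-scheme in the sense of Definition~\ref{transitive defn}: for every field extension $L/F$, either $H(L) = \emptyset$ (in which case transitivity is vacuous), or else $H(L)$ is a principal homogeneous space under $G(L)$, hence certainly a transitive $G(L)$-set. Therefore Proposition~\ref{transitive rational points}, combined with the hypothesis that factorization holds for $G$ with respect to $\wh X$, implies that $H(F) \neq \emptyset$. But a torsor with a rational point is trivial, so $\alpha = 0$ in $H^1(F, G)$, which is exactly what the local-global principle demands.

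The only subtlety worth noting is the verification that a $G$-torsor really is a transitive $G$-scheme, which requires nothing more than the definition of a torsor; no genuine obstacle is expected here, and there are no hard steps. Indeed the bulk of the work has already been done in Proposition~\ref{transitive rational points}, so the proof of this proposition amounts essentially to citing that result together with the torsor/cohomology dictionary.
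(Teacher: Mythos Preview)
Your proposal is correct and follows precisely the route the paper takes: the paper simply notes that $G$-torsors are transitive $G$-schemes and invokes Proposition~\ref{transitive rational points}, which is exactly what you do (with the torsor/cohomology dictionary spelled out). There is nothing to add.
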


\begin{prop} \label{local global factorization}
Suppose $A$ is an algebraic object of some type $\ms T$, whose
automorphism group is the linear algebraic group $G$. Then the following
are equivalent:
\begin{enumerate}
\item the local-global principle holds for $A$,
\item the local-global principle holds for $G$.
\item factorization holds for $G$,
\end{enumerate}
\end{prop}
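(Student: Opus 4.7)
The plan is to establish the cycle $(3) \Rightarrow (2) \Rightarrow (1) \Rightarrow (3)$, noting that $(3) \Rightarrow (2)$ is exactly Proposition~\ref{local global torsors}.

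For $(2) \Rightarrow (1)$, suppose $B$ is an algebraic object of type $\ms T$ with $B_{F_P} \cong A_{F_P}$ for every $P$ and $B_{F_U} \cong A_{F_U}$ for every $U$, and fix such local isomorphisms. Applying the patching equivalence of Theorem~\ref{object patching} to the standard patching problem of $B$ and composing with these local isomorphisms, one realizes $B$ via a patching problem of the form $(\til A, \phi)$ whose gluing data satisfy $\phi_\wp \in \Aut(A_{F_\wp}) = G(F_\wp)$. The collection $(\phi_\wp)$ represents a class $\beta \in H^1(F, G)$ for which $B$ is the corresponding twist of $A$, and the chosen local isomorphisms exhibit $\beta_{F_P}$ and $\beta_{F_U}$ as trivial for all $P, U$. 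By $(2)$, $\beta = 0$, hence $B \cong A$.

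For $(1) \Rightarrow (3)$, given $(g_\wp) \in \prod_{\wp} G(F_\wp)$, form the patching problem $(\til A, \phi)$ of type $\ms T$ by taking the standard local pieces $A_\xi = A_{F_\xi}$ and setting $\phi_\wp = g_\wp$, viewed as an automorphism of $A_{F_\wp}$. By Theorem~\ref{object patching}, this patching problem is isomorphic to $(\til B, \mbb I)$ for a unique object $B$ over $F$, and by construction $B_{F_\xi} \cong A_{F_\xi}$ for each $\xi$. Applying $(1)$ gives $B \cong A$, so $(\til A, \phi)$ is isomorphic to the standard patching problem $(\til A, \mbb I)$. By Definition~\ref{object patching problems}, such an isomorphism is specified precisely by automorphisms $g_P \in G(F_P)$ and $g_U \in G(F_U)$ whose compatibility condition at each branch reduces to the factorization $g_\wp = g_P g_U$ (after possibly passing to inverses so that the multiplication order matches the convention of Definition~\ref{factorization definition 1}).

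The main obstacle is bookkeeping rather than mathematical substance: one must align the multiplication order of Definition~\ref{factorization definition 1} with the composition law for morphisms of patching problems, and verify that $H^1(F, G)$-twists of $A$ are faithfully captured by the gluing data appearing in the patching problems $(\til A, \phi)$ above. These verifications are formal consequences of the patching equivalence and the identification $G = \Aut(A)$, but they must be carried out carefully.
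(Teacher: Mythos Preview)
Your argument is correct and tracks the paper's proof closely: the $(1)\Rightarrow(3)$ step is identical (form the patching problem with gluing data $g_\wp$, use the local-global principle to identify it with the trivial patching problem, and read off the factorization from the components of the isomorphism), and $(3)\Rightarrow(2)$ is Proposition~\ref{local global torsors} in both.

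The only difference is in how $(1)$ and $(2)$ are linked. The paper dispatches $(1)\Leftrightarrow(2)$ in one line by invoking descent (\cite{Serre:LF}, X.\S2, Proposition~4) to identify $H^1(L,G_L)$ with the pointed set of twisted forms of $A_L$ for every extension $L/F$; the two local-global statements then become literally the same assertion. Your $(2)\Rightarrow(1)$ reaches the same conclusion but routes through a patching problem unnecessarily, and the sentence ``the collection $(\phi_\wp)$ represents a class $\beta\in H^1(F,G)$'' is imprecise: patching data over the branch fields $F_\wp$ is not a Galois cocycle. What you actually need (and what your last paragraph gestures at) is exactly the descent identification the paper cites --- $B$ is a twisted form of $A$ whose class in $H^1(F,G)$ restricts trivially to each $F_P$ and $F_U$. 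Once you say that, the patching detour in this step can be dropped.
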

\begin{proof}
Since $G$ is the automorphism group of $A$, by descent (see
\cite{Serre:LF}, X.\S2, Proposition 4),
we may identify $H^1(L, G_L) = \oper{Forms}(A_L)$, the pointed set of
twisted forms of $A_L$. In particular, it is immediate from the
definition that the local-global principle for $A$ is equivalent to the
local-global principle for $G$. 

Suppose we have a local-global principle for $A$, and consider a
collection of elements $g_\wp \in G(F_\wp)$.  Consider the patching
problem $(B, \phi)$ where $B_P = A_{F_P}, B_U = A_{F_U}$, and $\phi_\wp
= g_\wp$. By the local-global principle, this is isomorphic to the
patching problem $(\til A, \mbb I)$. By definition, we may find an
isomorphism $h : (B, \mbb I) \to (B, \phi)$.  Let $g_P = h_P^{-1}$ and
$g_U = h_U$. By definition of a morphism of patching problems, we find
that $g_\wp = g_P g_U$, and that $g_P \in Aut(B_P) = G(F_P)$, $g_U \in
Aut(B_U) = G(F_U)$ as desired.

Conversely, suppose we have factorization for $G$. In this case it is
immediate from Proposition~\ref{local global torsors} that the local
global principle must hold for $G$, completing the proof.
\end{proof}

\begin{rem}
Theorem~\ref{local global factorization} raises the question of whether
it would be possible to show the equivalence of the local-global
principle for a group $G$ and factorization for this group without the
presence of an algebraic object with $G$ as its automorphism group. This would give a converse to
Proposition~\ref{local global torsors}. In turn since $G$-torsors are, in particular, transitive $G$-schemes,
one would then also obtain a converse to Proposition~\ref{transitive rational points}.
\end{rem}

\section{Factorization for retract rational groups}

\subsection{Overview and preliminaries}
The goal of this section will be to prove the following theorem:

\begin{thm} \label{main theorem}
Suppose $\wh X$ is a connected normal finite $\mbb P^1_T$-scheme, with
function field $F$ and let $G$ be a connected retract rational algebraic group over
$F$. Then factorization holds for $G$ with respect to $\wh X$.
\end{thm}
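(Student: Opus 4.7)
The strategy is to adapt the proof of \cite{HHK}, Theorem~3.7, replacing the birational equivalence of $G$ with affine space used there by the weaker retract rationality hypothesis. Recall that for rational $G$, one fixes an open neighborhood of $e \in G$ together with a birational isomorphism $\phi : \A^n \dra G$ with $\phi(0) = e$, and uses $\phi$ to transfer the factorization problem for elements close to $e$ into the additive factorization problem on $\A^n$; the latter reduces via Lemmas~\ref{sum for P1} and~\ref{complete sum} to the identity $\wh R_{\A^1} + \wh R_\infty = \wh R_\wp$ applied coordinate-wise, and the resulting approximate factorization is iterated Newton-style using $t$-adic completeness. To prove Theorem~\ref{main theorem}, I would instead fix rational maps $i : G \dra V$ and $r : V \dra G$, with $V \subset \A^n$ open, realizing $G$ as a retract: $r \circ i = \id_G$ on an open subset. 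After translating, assume $e$ lies in the domain of $i$, $i(e) = 0$, and $r(0) = e$. Only the pair $(i, r)$, not a birational inverse, is available; the task is to show that this is enough to run the same Newton iteration.

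The plan then proceeds in three steps. First, reduce to factoring tuples $(g_\wp)$ with each $g_\wp$ $t$-adically close to the identity. This reduction is carried out essentially as in \cite{HHK}, using density of $F$-rational points together with Lemma~\ref{field tensors} to pass to a finite cover of $\wh X$; the retract hypothesis plays no role at this stage. Second, for $g_\wp \equiv e \pmod{t^N}$ with $N$ sufficiently large, set $v_\wp = i(g_\wp) \in \A^n(F_\wp)$; then $v_\wp \equiv 0 \pmod{t^N}$ and, crucially, $v_\wp$ lies in the $F$-subspace $W = di_e(\Lie G) \subset \A^n$ (the image of the tangent map). Applying Lemma~\ref{sum for P1} coordinate-wise to a basis of $W$, write $v_\wp = v_P + v_U$ with $v_P \in W(F_P)$, $v_U \in W(F_U)$, each $\equiv 0 \pmod{t^N}$ and lying in the domain of $r$, and set $g_P^{(1)} = r(v_P)$, $g_U^{(1)} = r(v_U)$. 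Third, form the residual error $(g_P^{(1)})^{-1} g_\wp (g_U^{(1)})^{-1}$ and verify that it is congruent to $e$ modulo a strictly higher power of $t$ (in fact modulo $t^{2N}$), so that the iteration may continue. Taking $t$-adic limits of the resulting Cauchy sequences in $\wh R_P$ and $\wh R_U$ produces elements $g_P \in G(F_P)$ and $g_U \in G(F_U)$ with $g_P g_U = g_\wp$.

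The main obstacle is the error-improvement step. In the rational case one has $\phi^{-1}(\phi(a)\phi(b)) = a + b + O((a,b)^2)$ near $(0,0)$, and this drives the iteration; in the retract setting the corresponding pullback $m(a, b) := i(r(a) r(b))$ need not satisfy $m(a, 0) = a$ for general $a$, only for $a$ in the image of $i$. Computing at $(0, 0)$ gives $\partial_a m = \partial_b m = di_e \circ dr_0$, which, using $r \circ i = \id_G$, is the projection $P$ onto $W$; thus $m(a, b) = P(a) + P(b) + R(a, b)$, where each monomial of $R$ has positive degree in both $a$ and $b$ (since $m(a, 0) = P(a)$ and $m(0, b) = P(b)$ are linear). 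The key observation salvaging the argument is that by choosing $v_P$ and $v_U$ inside the $F$-subspace $W$ one has $P(v_P) = v_P$ and $P(v_U) = v_U$, while the bilinear nature of $R$ forces $R(v_P, v_U)$ to have $t$-valuation $\ge 2N$. Consequently $m(v_P, v_U) \equiv v_\wp \pmod{t^{2N}}$, and applying $r$ together with $r \circ i = \id_G$ yields $r(v_P) r(v_U) \equiv g_\wp \pmod{t^{2N}}$ on $G$, giving quadratic convergence. The remaining technical effort lies in controlling the domains of definition of $i$ and $r$ throughout the iteration and in carrying out the initial reduction carefully, both of which are manageable once the iterates are known to live in shrinking $t$-adic neighborhoods of $0$ and $e$.
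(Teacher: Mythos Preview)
Your overall strategy is sound and close to the paper's, but the error-improvement step has a genuine gap. You assert that $v_\wp = i(g_\wp)$ lies in the linear subspace $W = di_e(\Lie G) \subset \A^n$. This is false: $v_\wp$ lies in the \emph{image} of $i$, an open piece of a generally nonlinear subvariety $Y \subset \A^n$ whose tangent space at $0$ is $W$, but $Y \neq W$ unless the retraction is already a linear projection. Your splitting $v_\wp = v_P + v_U$ with $v_P, v_U \in W$ is therefore unavailable. The companion claim that $m(a,0) = P(a)$ is linear is equally false: $m(a,0) = i(r(a)\cdot e) = (i \circ r)(a)$ is the retraction of $\A^n$ onto $Y$, which has linearization $P$ at $0$ but carries nonzero higher-order terms. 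Hence $R$ contains pure-$a$ and pure-$b$ terms of degree $\geq 2$, contradicting the bilinearity on which your ``key observation'' rests.

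The argument can be repaired, but not by the mechanism you describe. What actually saves it is that $R$ has \emph{total} degree $\geq 2$, so $|R(v_P,v_U)|$ is second-order small once $|v_P|,|v_U| \leq |t|^N$ regardless of bilinearity, and that $v_\wp \in Y$ with $T_0 Y = W$ forces $|v_\wp - P(v_\wp)|$ to be second-order small as well; but making the latter precise requires exactly the content of the paper's standard-position Lemma~\ref{standard retraction}. The paper sidesteps working in $\A^n$ with the subspace $W$ altogether: it uses Lemma~\ref{standard retraction} together with a $t$-adic inverse function theorem (Lemma~\ref{local bijectivity}, Corollary~\ref{local bijection corollary}) to identify a neighborhood of $0 \in Y$ bijectively with a disk in $\A^d$, $d = \dim G$, so that the transported multiplication on $\A^d$ genuinely has linear part $x+y$ and Theorem~\ref{abstract small factorization} applies directly. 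It also first reduces from general $\wh X$ to $\mbb P^1_T$ via Weil restriction (Lemma~\ref{corestriction reduction}), so that only the single additive identity of Lemma~\ref{sum for P1} is needed; your first step conflates this reduction with the separate density reduction to small elements.
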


Using this theorem, we may easily proceed to the proof of the local
global principle for schemes with transitive action stated earlier in Theorem~\ref{homogeneous local
global}: If $G$ is a connected retract rational group over $F$, then by
the theorem, factorization holds for $G$ with respect to $\wh X$. But
then by Proposition~\ref{transitive rational points}, the local-global
principle must hold for transitive $G$ schemes, as desired.

The proof of this theorem will occupy the remainder of the section. Our
strategy will be to reduce this to a more abstract factorization
problem, arising from the case when $\wh X = \mbb P^1_T$. 
Overall, the proof stategy is roughly parallel to that followed in \cite{HHK}, where retractions of open
subsets of affine space take the place of open subsets of affine space.

\begin{defn} \label{factorization definition 2}
Suppose we have commutative rings $F \subset F_1, F_2 \subset F_0$, and an algebraic
group $G$ over $F$.  We will say that \textbf{factorization holds} with respect to
$G, F, F_1, F_2, F_0$ if for every $g_0 \in G(F_0)$ there exist $g_1 \in G(F_1)$ and
$g_2 \in G(F_2)$ such that $g_0 = g_1 g_2$.
\end{defn}
Note that here we are omitting from the notation the homomorphism $G(F_i)
\to G(F_0)$ for $i = 1, 2$. Suppose $\wh X$ is a connected, normal, finite
$\mbb P^1_T$-scheme. In this case, we set $F = \F(\wh X)$, and we let
\[F_1 = \prod\limits_{P \in \points{\wh X}} F_P, \ \ \ F_2 =
\prod\limits_{U \in \components{\wh X}} F_U, \ \ \ F_0 =
\prod\limits_{\wp \in \branches{\wh X}} F_\wp \]

\begin{rem}
It follows immediately from the definitions that factorization holds for
the group $G$ with respect to $\wh X$ in the sense of
Definition~\ref{factorization definition 1} if and only if factorization
holds for $G, F, F_1, F_2, F_0$ in the sense of
Definition~\ref{factorization definition 2} where $F, F_1, F_2, F_0$ are as
above.
\end{rem}

Back to the somewhat more abstract setting, suppose that $F$ is some
field, and let $L$ be a finite dimensional commutative $F$-algebra.
Recall that if $G$ is a linear algebraic group scheme, we may define its
Weil restriction, also referred to as its corestriction or transfer, as
the linear algebraic group with the functor of points defined by:
\[\co_{L/F}G(R) = G(R \otimes_F L)\]
where $R$ ranges through all $F$-algebras (\cite{Gro:FGA}, Exp.\ 195, p.\ 13 for the definition and Exp.\ 221,
p.\ 19 for proof of existence). We note
that the corestriction in fact
comes from a Weil restriction functor from the category of
quasi-projective $L$-schemes to the category of quasi-projective
$F$-schemes, and that this functor takes open inclusions to open
inclusions, and takes affine space to affine space (of a different
dimension). In particular, it follows that the corestriction of a
rational (or retract rational) variety is itself rational (resp. retract
rational).

We note the following Lemma, which is a consequence of the
definition of the corestriction in terms of the functor of points given
above.
\begin{lem}
Let $F$ be a field, and suppose we are given rings $F \subset F_1, F_2
\subset F_0$, and a finite dimensional commutative $F$-algebra $L$. Let
$G$ be a linear algebraic group over $L$. Then factorization holds for $G,
L, L \otimes_F F_1, L \otimes_F F_2, L \otimes F_0$ if and only if it
holds for $\co_{L/F}G, F, F_1, F_2, F_0$.
\end{lem}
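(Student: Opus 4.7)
The plan is to unwind the functor-of-points definition of the Weil restriction and observe that the two factorization statements are literally the same data, once translated.

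First, for any commutative $F$-algebra $R$, the defining property of $\co_{L/F}G$ gives a natural identification of groups
\[ \co_{L/F}G(R) = G(R \otimes_F L). \]
Applying this with $R = F, F_1, F_2, F_0$ yields isomorphisms
\[ \co_{L/F}G(F) \cong G(L), \qquad \co_{L/F}G(F_i) \cong G(L \otimes_F F_i) \quad (i = 0,1,2). \]
By naturality of these identifications in $R$, the inclusions $F \subset F_1, F_2 \subset F_0$ correspond under $\co_{L/F}G(-) = G(- \otimes_F L)$ to the inclusions $L \subset L\otimes_F F_1,\ L\otimes_F F_2 \subset L\otimes_F F_0$, and the induced homomorphisms on groups of points commute with the identifications above.

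Second, I check that the factorization condition transports. Given $g_0 \in \co_{L/F}G(F_0)$, write $g_0'$ for the corresponding element of $G(L \otimes_F F_0)$. A factorization $g_0 = g_1 g_2$ with $g_i \in \co_{L/F}G(F_i)$ consists of the same data as a factorization $g_0' = g_1' g_2'$ with $g_i' \in G(L \otimes_F F_i)$, since the identification is a group isomorphism compatible with the inclusions from $F_i$ into $F_0$. Hence a factorization exists in one setting if and only if it exists in the other, which is exactly the statement of the lemma.

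The only obstacle is bookkeeping: verifying that the natural isomorphism $\co_{L/F}G(R) = G(R \otimes_F L)$ is indeed a group isomorphism (which follows from the functor of points being group-valued) and that it is functorial in $R$ (which is the content of being a representable functor, or equivalently, the defining adjunction for Weil restriction). There is no substantive calculation beyond these formal observations.
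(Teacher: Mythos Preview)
Your proposal is correct and is precisely the argument the paper has in mind: the paper does not give a proof beyond remarking that the lemma ``is a consequence of the definition of the corestriction in terms of the functor of points,'' and your write-up simply spells out that consequence. There is nothing to add.
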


\begin{lem} \label{corestriction reduction} Suppose that we are given a
morphism of connected projective normal finite $\mbb P^1_T$-schemes $f :
\wh Y \to \wh X$. Let $L$ be the function field of $\wh Y$ and $F$ the
function field of $\wh X$. Then factorization holds for $G, \wh Y$ if
and only if it holds for $\co_{L/F} G, \wh X$.
\end{lem}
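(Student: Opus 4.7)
The plan is to reduce this lemma to the immediately preceding lemma on corestriction, by unpacking both sides through the Remark following Definition~\ref{factorization definition 2}.

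First, I would observe that since $f : \wh Y \to \wh X$ is a morphism of finite $\mbb P^1_T$-schemes, and both $\wh Y$ and $\wh X$ are finite over $\mbb P^1_T$, the map $f$ itself is finite; in particular the induced extension $L/F$ of function fields is finite. By the Remark, factorization for $\co_{L/F}G$ with respect to $\wh X$ is equivalent to abstract factorization (Definition~\ref{factorization definition 2}) for the data $\co_{L/F}G, F, F_1, F_2, F_0$, where
\[ F_1 = \prod_{P \in \points{\wh X}} F_P, \quad F_2 = \prod_{U \in \components{\wh X}} F_U, \quad F_0 = \prod_{\wp \in \branches{\wh X}} F_\wp. \]
Analogously, factorization for $G$ with respect to $\wh Y$ is equivalent to abstract factorization for $G, L, L_1, L_2, L_0$, where $L_1, L_2, L_0$ are the corresponding products indexed over $\points{\wh Y}$, $\components{\wh Y}$, and $\branches{\wh Y}$.

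Next, I would apply Lemma~\ref{field tensors} to produce canonical isomorphisms $L \otimes_F F_P \cong \prod_{P' \mid P} F_{P'}$, $L \otimes_F F_U \cong \prod_{U' \mid U} F_{U'}$, and $L \otimes_F F_\wp \cong \prod_{\wp' \mid \wp} F_{\wp'}$. Because $L/F$ is finite, tensor product by $L$ commutes with the finite products defining $F_1, F_2, F_0$, and assembling the isomorphisms above yields $L \otimes_F F_i \cong L_i$ for $i = 0, 1, 2$ (here we use that every point, component, or branch of $\wh Y$ lies over a unique point, component, or branch of $\wh X$).

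Finally, I would invoke the preceding lemma, which asserts that factorization for $\co_{L/F}G, F, F_1, F_2, F_0$ is equivalent to factorization for $G, L, L \otimes_F F_1, L \otimes_F F_2, L \otimes_F F_0$; by the identifications above, the latter is precisely factorization for $G, L, L_1, L_2, L_0$, i.e.\ factorization for $G$ with respect to $\wh Y$. The main obstacle, such as it is, lies in the verification that $L \otimes_F F_i \cong L_i$, which requires both the finiteness of $L/F$ (so tensoring commutes with the finite products) and the content of Lemma~\ref{field tensors}; the real substance has been prepackaged into that lemma and into the corestriction lemma, so what remains is essentially bookkeeping.
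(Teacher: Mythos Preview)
Your proof is correct and follows essentially the same route as the paper's own proof, which simply states that the result ``follows immediately from the universal property of the Weil restriction, together with Lemma~\ref{field tensors}.'' You have merely unpacked this one-line argument: the ``universal property of the Weil restriction'' is exactly the content of the preceding lemma you invoke, and your identification $L\otimes_F F_i \cong L_i$ via Lemma~\ref{field tensors} (together with the observation that points, components, and branches of $\wh Y$ are precisely the fibers over those of $\wh X$, since both structure maps to $\mbb P^1_T$ factor through $f$) is the remaining bookkeeping the paper leaves implicit.
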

\begin{proof}
This follows immediately from the universal property of the Weil
restriction, together with Lemma~\ref{field tensors}.
\end{proof}

\begin{lem}
Let $F$ be the function field of $\mbb P^1_T$. Suppose that for every
connected retract rational group $G$ over $F$, factorization holds for $G$
with respect to $\mbb P^1_T$ (as in Definition~\ref{factorization
definition 1}). Then for every normal finite $\mbb P^1_T$-scheme $\wh X$
with function field $L$, and every connected retract rational group $H$
over $L$, factorization holds for $H$ with respect to $\wh X$.
\end{lem}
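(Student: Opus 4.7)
The plan is to use Lemma~\ref{corestriction reduction} to reduce the statement to the standing hypothesis. Let $F = \F(\mbb P^1_T)$, and suppose we are given $\wh X$ a normal finite $\mbb P^1_T$-scheme with function field $L$, and $H$ a connected retract rational group over $L$. Set $G = \co_{L/F} H$, the Weil restriction. By Lemma~\ref{corestriction reduction}, factorization for $H$ with respect to $\wh X$ is equivalent to factorization for $G$ with respect to $\mbb P^1_T$. Thus it suffices to verify that $G$ is a connected retract rational algebraic group over $F$, and then invoke the hypothesis.

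Retract rationality of $G$ is essentially immediate from the properties of the Weil restriction functor noted in the paragraph preceding Lemma~\ref{corestriction reduction}. Since $H$ is retract rational over $L$, by definition it is a retract of an open subset $V \subset \mbb A^n_L$. Applying $\co_{L/F}$, which takes open inclusions to open inclusions and sends $\mbb A^n_L$ to $\mbb A^{n[L:F]}_F$, we exhibit $G = \co_{L/F}H$ as a retract of the open subscheme $\co_{L/F} V \subset \mbb A^{n[L:F]}_F$, so $G$ is retract rational over $F$.

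The one point requiring a little care is connectedness of $G$. After passing to a separable closure $F^s$ of $F$, the Weil restriction decomposes as a product
\[ G_{F^s} \cong \prod_{\sigma} H^\sigma, \]
where $\sigma$ runs over the $F$-embeddings $L \hra F^s$ (with multiplicities accounting for any inseparable part of $L/F$, after composing with a suitable Frobenius pullback). Since each factor $H^\sigma$ is connected, so is the product, and connectedness descends from $F^s$ back to $F$. This is the main obstacle; it can also be established by appealing directly to the standard fact that the Weil restriction of a connected smooth group scheme along a finite flat morphism is connected.

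Having verified that $G$ is a connected retract rational algebraic group over $F$, the hypothesis of the lemma applies to give factorization for $G$ with respect to $\mbb P^1_T$, and Lemma~\ref{corestriction reduction} then transports this conclusion back to yield factorization for $H$ with respect to $\wh X$, as required.
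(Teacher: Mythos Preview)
Your proof is correct and follows exactly the paper's approach: the paper's proof is the single line ``This follows immediately from Lemma~\ref{corestriction reduction},'' and you have simply unpacked why that lemma applies, namely by checking that $\co_{L/F}H$ is again connected and retract rational (both of which the paper records just before the lemma without spelling out a proof).
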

\begin{proof}
This follows immediately from Lemma~\ref{corestriction reduction}.
\end{proof}

As a consequence of this, in order to prove Theorem~\ref{main theorem},
we may restrict to the setting where $F$ is the function field of $\mbb
P^1_T$, and where $F_1 = F_\infty$, $F_2 = F_{\mbb A^1_k}$, and where
$F_0 = F_\wp$ is the field associated to the unique branch $\wp$ along
$\mbb A^1_\fk$ at $\infty$. We let $\wh R_0 = \wh R_\wp, \m V = \wh
R_\infty, \m W = \wh R_{\mbb A^1_\fk}$. For convenience, in the sequel
we will often refer to the following hypothesis for factorization.

\begin{hyp}[see \cite{HHK}, Hypothesis~2.4]\label{factorization hyp}
We assume that the complete discrete valuation ring $\wh R_0$ contains a
subring $T$ which is also a complete discrete valuation ring having
uniformizer $t$, and that $F_1, F_2 $ are subfields of $F_0$ containing
$T$.  We further assume that $\m V \subset F_1 \cap \wh R_0$, $\m W
\subset F_2 \cap \wh R_0$ are $t$-adically complete $T$-submodules
satisfying $\m V + \m W  = \wh R_0$. 
\end{hyp}

\begin{lem}
With respect to the scheme $\mbb P^1_T$ consider
$F = \F({\mbb P^1_T})$, $F_0 = F_\wp$, $F_1 = F_\infty$, $F_2 =
F_{\mbb A^1_\fk}$, $\wh R_0 = \wh R_\wp$, $\m V = \wh R_\infty, \m W =
\wh R_{\mbb A^1_\fk}$.
Then these rings and modules satisfy the Hypothesis~\ref{factorization
hyp}.
\end{lem}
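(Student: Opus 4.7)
The plan is to verify each of the four clauses of Hypothesis~\ref{factorization hyp} in turn for the specified rings and modules, with essentially all of the work already done in the preceding subsection.

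First I would observe that since $\mathbb{P}^1_T$ is a $T$-scheme, the ring $T$ sits inside every local ring $\mathcal{O}_{\mathbb{P}^1_T, P}$ and hence inside each completion $\wh{R}_P$ and each $\wh{R}_\wp$; in particular $T \subset \wh{R}_0$ and, by passing to fraction fields, $T \subset F_1 \cap F_2$. The uniformizer $t$ of $T$ remains a uniformizer of $T$ inside $\wh{R}_0$ (it is the one already fixed in the setup), so clause (1) is immediate. For clause (2), the inclusions $F_\infty \hookrightarrow F_\wp$ and $F_{\mathbb{A}^1_k} \hookrightarrow F_\wp$ were constructed explicitly at the start of Section~3, right after the definition of branches.

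Next I would check the module-theoretic part of clause (3). By construction $\wh{R}_\infty$ is the $\mathfrak{m}$-adic completion of $\mathcal{O}_{\mathbb{P}^1_T,\infty}$ and therefore lies in its fraction field $F_\infty$; similarly $\wh{R}_\infty$ is contained in $\wh{R}_\wp$ via the natural map of completions, so $\wh{R}_\infty \subset F_\infty \cap \wh{R}_0$. The same verification works verbatim for $\wh{R}_{\mathbb{A}^1_k} \subset F_{\mathbb{A}^1_k} \cap \wh{R}_0$, using that the $t$-adic topology on $\wh{R}_\wp$ agrees with its $\wp$-adic topology, as was observed in the paragraph discussing the inclusions $F_U \to F_\wp$. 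Both $\wh{R}_\infty$ and $\wh{R}_{\mathbb{A}^1_k}$ are $t$-adically complete by construction (the latter by definition of $\wh{R}_U$, the former because in $\wh{R}_\infty$ the element $t$ generates a power of the maximal ideal so $t$-adic and $\mathfrak{m}$-adic completeness coincide), and each is a $T$-submodule of $\wh{R}_0$ as all the inclusions are $T$-linear. This handles clause (3).

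The main substantive point is clause (4), the equality $\wh{R}_\infty + \wh{R}_{\mathbb{A}^1_k} = \wh{R}_\wp$, but this is exactly the content of Lemma~\ref{sum for P1}. So I would conclude simply by citing that lemma. The only step I would expect to merit a line of justification is the $t$-adic completeness of $\wh{R}_\infty$ (since its definition is in terms of the maximal ideal, not $t$), but this follows from the fact that, at the closed point $\infty$ of $\mathbb{P}^1_T$, the uniformizer $t$ of $T$ together with a uniformizer of the generic point of the closed fiber generate the maximal ideal, so the $t$-adic topology is at least as fine as, and thus cofinal with, a power of the $\mathfrak{m}$-adic one on a complete Noetherian local ring. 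With these four clauses verified the proof is complete.
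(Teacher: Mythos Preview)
Your proof is correct and follows the same route as the paper's, which simply notes that completeness of $\mathfrak V,\mathfrak W$ holds by construction and that $\mathfrak V + \mathfrak W = \wh R_0$ is Lemma~\ref{sum for P1}; you just spell out the routine containments the paper leaves implicit. One small slip: the $t$-adic topology on $\wh R_\infty$ is \emph{coarser} than the $\mathfrak m$-adic one (since $(t)^n \subset \mathfrak m^n$), not finer---your conclusion that $\wh R_\infty$ is $t$-adically complete is still correct, but the reason is that in a Noetherian $\mathfrak m$-adically complete local ring every ideal is closed, so an $I$-adic Cauchy sequence (being also $\mathfrak m$-adic Cauchy) has a limit lying in each $I^n$-coset.
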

\begin{proof}
The completeness of $\m V, \m W$ is satisfied by definition. The fact
that $\m V + \m W = \wh R_0$ follows from Lemma~\ref{sum for P1}.
\end{proof}

\subsection{Retractions -- basic definitions and properties}

Before attacking the problem of factorization directly, it is necessary
to collect some facts concerning retractions and retract rational
varieties. Retract rational varieties were introduced by Saltman in \cite{AlgHomage}.

\begin{defn}
We say that a variety $Y$ is a \textbf{retraction} of a variety $U$ if
there exist morphisms $i : Y \to U$ and $p : U \to Y$ such that $pi =
id_Y$.  We say that it is a \textbf{closed retraction} if $i$ is a
closed embedding.
\end{defn}
\begin{rem}
In the case of a closed retraction, we will occasionally abuse notation
by simply regarding $i$ as an inclusion.
\end{rem}

\begin{defn}
We say that $Y$ is a \textbf{rational retraction} of $U$ if there are
rational maps $i : Y \dra U$ and $p : U \dra Y$ such that $pi = id_Y$ on
some open set on which $pi$ is defined.
\end{defn}

\begin{defn}
We say a variety $Y$ is \textbf{retract rational} if it is a
rational retraction of $\mbb A^n$ for some $n$.
\end{defn}

In \cite{Sa:RR}, the property of a variety being retract rational was reinterpreted in terms of lifting of
torsors. Our methodology goes in an (a priori) different direction, focusing on the local geometry of retract
rational varieties from the point of view of adic topologies.

\begin{lem}[Rational retractions shrink to retractions] \label{retraction morphism}
Suppose $Y$ is a rational retraction of $U$ via rational maps $i, p$.
Then we may find dense open subsets $Y_0 \subset Y$ and $U_0 \subset U$
such that $i, p$ make $Y_0$ a retraction of $U_0$.
\iffalse
Further, we may assume $Y_0$ is affine.
\fi
\end{lem}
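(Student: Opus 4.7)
The plan is to first pass to the domains of definition of the rational maps $i$ and $p$, then shrink $Y$ so that $i$ lands in the domain of $p$ and $pi$ is literally the identity, and finally define $U_0$ as a suitable preimage so that the closure-under-$p$ condition is automatic.

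Concretely, I would let $Y_1 \subset Y$ be the (dense) open locus where $i$ is defined, and $U_1 \subset U$ the (dense) open locus where $p$ is defined. The composition $pi$ is a rational map $Y \dra Y$ that by hypothesis agrees with $\id_Y$ on a dense open $W \subset Y$; replacing $W$ by $W \cap Y_1 \cap i^{-1}(U_1)$ (all of whose factors are nonempty open, hence dense, in the irreducible variety $Y$), I obtain a dense open $Y_0 \subset Y$ on which $i$ is a morphism, $i(Y_0) \subset U_1$, and $pi = \id_{Y_0}$ as morphisms. This will be the final $Y_0$.

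Next I would set $U_0 := p^{-1}(Y_0) \cap U_1$, an open subset of $U$ on which $p$ restricts to a morphism $U_0 \to Y_0$. The key observation is that $i(Y_0) \subset U_0$ automatically: for any $y \in Y_0$ we have $p(i(y)) = y \in Y_0$, so $i(y) \in p^{-1}(Y_0)$, and since $i(y) \in U_1$ by construction, $i(y) \in U_0$. Thus $i$ restricts to a morphism $Y_0 \to U_0$, $p$ restricts to a morphism $U_0 \to Y_0$, and $pi = \id_{Y_0}$, as required for a retraction. Density of $U_0$ in $U$ follows because $U$ is irreducible and $U_0$ is a nonempty open (it contains $i(Y_0)$, which is nonempty since $Y_0$ is).

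There is really no serious obstacle here; the only thing to be careful about is that one cannot simply shrink $U$ to $U_1$, because $p$ might then not land in the shrunken $Y_0$. The device of defining $U_0 = p^{-1}(Y_0) \cap U_1$ circumvents exactly that issue, and the identity $pi = \id_{Y_0}$ forces $i(Y_0) \subset U_0$ for free, so that $i$ and $p$ genuinely restrict to morphisms between $Y_0$ and $U_0$.
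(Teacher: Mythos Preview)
Your argument is correct and follows essentially the same approach as the paper's own proof: pass to the domains of definition of $i$ and $p$, shrink $Y$ to the locus where $i$ lands in the domain of $p$ and $pi$ is literally the identity, and then take $U_0 = p^{-1}(Y_0)$ inside the domain of $p$, using $pi = \id_{Y_0}$ to force $i(Y_0) \subset U_0$. Your write-up is in fact slightly more explicit than the paper's (you spell out the intersection with $U_1$ and the nonemptiness/density of $U_0$), but the underlying idea is identical.
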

\begin{proof}
We may find open subsets $\til Y \subset Y$ and $\til U \subset U$ such
that $i, p$ restrict to morphisms on these sets, i.e. we have:
\[\xymatrix{
    Y \ar@{-->}[r] & U \ar@{-->}[r] & Y \\
    \til Y \ar[u] \ar[ru]^i & \til U \ar[u] \ar[ru]^p &
    \til Y \ar[u]
}\]
We choose $Y' = i^{-1}p^{-1}\til Y \subset \til Y$. We note that $pi$ is
defined on $Y'$ and so by definition, we may find $Y_0 \subset Y'$ such
that $pi|_{Y_0} = id_{Y_0}$. 
\iffalse
Since this property is preserved by
shrinking to a further open set, we may assume that $Y_0$ is in fact
affine. 
\fi
Let $U_0 = p^{-1}(Y_0)$. Then we have $pi(Y_0) \subset Y_0$ and so
$i(Y_0) \subset p^{-1}(Y_0) = U$. Since $p(U_0) \subset Y_0$ by
definition, we have constructed the desired morphisms.
\end{proof}

\begin{lem}[Retractions shrink to closed retractions] \label{retraction closed}
Suppose $Y$ is a retraction of $U$ via morphisms $i, p$.  Then we may
find dense open subvarieties $Y_0 \subset Y$ and $U_0 \subset U$ such
that $Y_0$ is a closed retraction of $U_0$ via the restrictions of $i,
p$.
\end{lem}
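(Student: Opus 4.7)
The plan is to observe that, in our setting of morphisms between varieties, the section $i$ is already a closed immersion, so no genuine shrinking is required: one may simply take $Y_0 = Y$ and $U_0 = U$.

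The first step is to verify that $p : U \to Y$ is a separated morphism. Since $Y$ and $U$ are varieties over the base field, both are separated over it. From the factorization of the absolute diagonal
$$\Delta_U : U \xrightarrow{\Delta_{U/Y}} U \times_Y U \hookrightarrow U \times U,$$
in which the second arrow is the base change of the closed immersion $\Delta_Y$ (hence itself a closed immersion) and the composite is the closed immersion $\Delta_U$, we conclude that $\Delta_{U/Y}$ is a closed immersion, i.e.\ $p$ is separated.

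The second step is the standard fact that a section of a separated morphism is a closed immersion. Concretely, the morphism $(\id_U, i \circ p) : U \to U \times_Y U$ is well-defined because $p \circ i = \id_Y$ gives $p = p \circ (i \circ p)$, and the fiber product of this morphism with $\Delta_{U/Y}$ is precisely $i(Y) \subset U$. Since $\Delta_{U/Y}$ is a closed immersion, so is the pullback $i(Y) \hookrightarrow U$; combining with the isomorphism $i : Y \xrightarrow{\sim} i(Y)$ (whose inverse is the restriction of $p$) shows that $i$ itself is a closed immersion.

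Having recognized $i$ as a closed embedding, we take $Y_0 = Y$ and $U_0 = U$, both trivially dense open in themselves; the restrictions of $i, p$ are just $i, p$, so $Y_0$ is a closed retraction of $U_0$. There is no real obstacle in the proof: the entire content of the lemma is the observation that sections of separated morphisms are closed immersions, and the hypothesis that $Y, U$ are varieties automatically delivers the separatedness of $p$.
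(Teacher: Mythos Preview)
Your proof is correct and takes a cleaner route than the paper's. The paper invokes Chevalley's theorem on constructibility of images to conclude that $i(Y)$ is locally closed in $U$, then chooses $U_0$ to be an open subset of $U$ in which $i(Y)$ is closed and sets $Y_0 = Y \cap U_0$. Your argument bypasses Chevalley entirely: since $U$ and $Y$ are varieties (hence separated over the base field), the cancellation property for separated morphisms forces $p$ to be separated, and then the section $i$ is automatically a closed immersion, so no shrinking is needed at all.

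Your approach is both more elementary and slightly stronger, since it shows that $Y_0 = Y$ and $U_0 = U$ already work. The paper's argument has the mild advantage of going through verbatim for finite-type morphisms between noetherian schemes where separatedness of $p$ need not be automatic, but in the actual applications (open subschemes of algebraic groups and of affine space) this extra generality is never used. The only implicit assumption you are relying on is that ``variety'' entails separatedness over the ground field; this is the standard convention and is consistent with the paper's usage.
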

\begin{proof}
Since we may identify $Y$ with the image of $i$ it follows that $Y$ is
locally constructible in $U$ [EGA 4-1, p. 239 (Chevalley's thm)]. By
[EGA 3-1, p. 12], it follows that $Y$ is the intersection of a closed
and an open set in $U$. By setting $U_0$ to be this open set, and $Y_0 =
Y \cap U_0$ it follows that $Y_0$ is closed in $U_0$. Now it is easy to
see that the restrictions of $i,p$ exhibit $Y_0$ as a retraction of
$U_0$.
\end{proof}

\begin{cor}[Rational retractions shrink to closed retractions] \label{retraction closed morphism}
Suppose $Y$ is a rational retraction of $U$ via rational maps $i, p$.
Then we may find dense open subsets $Y_0 \subset Y$ and $U_0 \subset U$
such that $i, p$ make $Y_0$ a closed retraction of $U_0$.
\end{cor}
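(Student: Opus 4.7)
The plan is simply to chain together the two preceding lemmas. Since the statement is really just the composition of \textbf{Rational retractions shrink to retractions} (Lemma~\ref{retraction morphism}) with \textbf{Retractions shrink to closed retractions} (Lemma~\ref{retraction closed}), no new geometric input should be needed.

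First I would apply Lemma~\ref{retraction morphism} to the given rational retraction $i, p$ between $Y$ and $U$. This yields dense open subsets $Y_1 \subset Y$ and $U_1 \subset U$ on which (the restrictions of) $i$ and $p$ are honest morphisms exhibiting $Y_1$ as a retraction of $U_1$; in particular $i(Y_1) \subset U_1$, $p(U_1) \subset Y_1$, and $p \circ i = \mathrm{id}_{Y_1}$.

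Next I would apply Lemma~\ref{retraction closed} to this genuine retraction $Y_1 \hookrightarrow U_1$. That lemma produces dense open subsets $Y_0 \subset Y_1$ and $U_0 \subset U_1$ such that the restrictions of $i, p$ make $Y_0$ a closed retraction of $U_0$. Since $Y_1$ is dense in $Y$ and $U_1$ is dense in $U$, the further dense opens $Y_0 \subset Y$ and $U_0 \subset U$ are still dense in the original varieties, and $i, p$ restrict from the given rational maps to furnish a closed retraction, which is exactly what is required.

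There is no real obstacle here; the only point of minor care is to verify that density is preserved along the chain (dense open of a dense open is dense) and that the restricted maps genuinely come from the original rational maps $i, p$, which is automatic since we have only ever restricted domains. Hence the corollary follows formally from the two preceding results.
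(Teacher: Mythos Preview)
Your proposal is correct and is exactly the approach the paper takes: the paper's proof consists of the single sentence that the result follows immediately from Lemmas~\ref{retraction morphism} and~\ref{retraction closed}, which is precisely the chaining you describe.
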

\begin{proof}
This follows immediately from Lemmas \ref{retraction morphism} and
\ref{retraction closed}.
\end{proof}

The following lemma gives us a first hint that retractions inherit some
of the geometry of the larger spaces.
\begin{lem}[Retractions of smooth schemes are smooth] \label{smooth
retraction}
Suppose $Y$ is a retraction of a smooth scheme $U$. Then $Y$ is smooth. 
\end{lem}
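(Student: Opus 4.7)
The plan is to invoke the infinitesimal lifting criterion for smoothness. A variety $Y/F$ is smooth if and only if it is locally of finite presentation over $F$ (which is automatic, since $Y$ is a variety over $F$) and formally smooth over $F$: for every $F$-algebra $R$ and every nilpotent ideal $I \subset R$, every $F$-morphism $\Spec(R/I) \to Y$ extends to $\Spec(R) \to Y$. So it suffices to check this lifting property.

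To do so, suppose we are given $f_0 : \Spec(R/I) \to Y$. First I would compose with the inclusion $i : Y \to U$ to obtain $i \circ f_0 : \Spec(R/I) \to U$. Since $U$ is smooth over $F$, this morphism lifts to some $g : \Spec(R) \to U$. Then I would set $\til f := p \circ g : \Spec(R) \to Y$ and observe that its restriction to $\Spec(R/I)$ satisfies
\[ \til f|_{\Spec(R/I)} = p \circ g|_{\Spec(R/I)} = p \circ i \circ f_0 = f_0, \]
using the retraction identity $p \circ i = \id_Y$. Thus $\til f$ is the desired lift, which gives formal smoothness and hence smoothness of $Y$.

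The argument is essentially formal: the only inputs are the smoothness of $U$ and the identity $p \circ i = \id_Y$. The conceptual point is that the split mono structure furnished by a retraction is exactly what is needed to transfer the infinitesimal lifting property from $U$ to $Y$, so there is no real obstacle beyond invoking the standard characterization of smoothness via formal lifts. If one prefers a more geometric route, one could alternatively argue that $p^* \Omega_{Y/F}$ is a direct summand of $\Omega_{U/F}$ (via the composite $p^* \Omega_{Y/F} \to \Omega_{U/F} \to p^* \Omega_{Y/F}$ induced by $p$ and $i$), so local freeness of $\Omega_{U/F}$ propagates to $\Omega_{Y/F}$, but the lifting argument above is both quicker and more transparent.
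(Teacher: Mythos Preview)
Your proof is correct and follows essentially the same approach as the paper: both invoke the infinitesimal lifting criterion for smoothness, compose the given map with $i$ to land in $U$, lift there using smoothness of $U$, and then compose with $p$ and use $p\circ i = \id_Y$ to obtain the required lift into $Y$. The only cosmetic difference is that the paper phrases the criterion via arbitrary thickenings $S_0 \hookrightarrow S$ with local lifts over a cover, whereas you use the equivalent affine formulation with $\Spec(R/I)\hookrightarrow\Spec(R)$.
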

\begin{proof}
This follows from the formal criterion for smoothness (see for example
\cite{EGA4p4} \S 17 or \cite{Ill:FDH} \S 2). From this formulation, in the language of \cite{Ill:FDH}, we must
show that if $S_0 \to S$ is a thickening, and $f : S_0 \to Y$ is a
morphism, then we must be able to find a cover $\{V_i\}$ of $S$ and
morphisms $g_i : V_i \to Y$ extending $f|_{S_0 \cap V_i}$. To see this,
we first use the smoothness of $U$ to find $\til g_i : V_i \to U$
extending $i \circ f|_{S_0 \cap V_i}$. Now we set $g_i = p \circ \til
g_i$. We then have 
\begin{align*}
g_i|_{S_0 \cap V_i} &= p \circ \til g_i|_{S_0 \cap V_i} \\
&= p \circ i \circ f|_{S_0 \cap V_i} \\
&= f|_{S_0 \cap V_i}
\end{align*}
as desired.
\end{proof}

\begin{lem}[Standard position for retractions]\label{standard retraction}
Suppose $Y$ is a $d$-dimensional variety which is a closed retraction of
an open subscheme $U \subset \mbb A^n$. We also suppose that with respect
to the inclusion of $Y$ in $\mbb A^n$, that $0 \in Y$. Then we may shrink
$U$ and choose coordinates on $U$ so that $Y$ smooth and is the zero locus of
polynomials $f_1, \ldots, f_{n-d}$ with
\[f_i = x_i + P_i\]
where the $x_i$'s are the coordinate functions on $\mbb A^n$ and $P_i$ is
a polynomial in the $x_j$'s, each of whose terms are of degree at least
$2$.  

Further, we may alter $i$ and $p$ defining the retraction so that the
morphism $i p : U \to Y \to U$ is given by
\[(x_1, \ldots, x_n) \mapsto (M_1 + Q_1, \ldots, M_n + Q_n) \]
where
\[M_i = \left\{
    \begin{matrix}
    0 && \text{if } 1 \leq i \leq n-d \\
    x_i && \text{if } n-d < i \leq n
    \end{matrix}
    \right.
\]
and $Q_i$ is a rational function in the variables $x_i$, regular on $U$,
such that $\left. \frac\partial{\partial x_j} Q_i\right|_0 = 0$ for all $i,j$.
\end{lem}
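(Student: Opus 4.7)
\emph{Standard form for $Y$.} The plan is to proceed in two steps --- first placing $Y$ in standard form via a linear change of coordinates on $\A^n$, then applying a further linear change, of a shape that preserves this form, to normalize $d(i \circ p)|_0$. For the first step, by Lemma~\ref{smooth retraction} $Y$ is smooth, in particular smooth of dimension $d$ at $0$, so $T_0 Y \subset T_0 \A^n$ is $d$-dimensional; after a linear change of coordinates on $\A^n$ I may assume $T_0 Y = \mathrm{span}(e_{n-d+1}, \ldots, e_n)$. Let $\overline Y$ denote the closure of $Y$ in $\A^n$; near $0$, $\overline Y$ coincides with $Y$ and is smooth of dimension $d$, so a minimal set of generators of its ideal in $\mc O_{\A^n, 0}$ consists of $n-d$ elements, which I may take polynomial after clearing denominators. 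Their differentials at $0$ must lie in and span the conormal space $\mathrm{span}(dx_1, \ldots, dx_{n-d})$, so a linear recombination yields polynomials $f_i = x_i + P_i$ with each $P_i$ of order $\geq 2$. Shrinking $U$ to remove the closed locus where the $f_i$ fail to generate the ideal of $Y$ (a locus not containing $0$) establishes the first claim.

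\emph{Standard form for $\pi = i \circ p$.} From $p \circ i = \id_Y$, the composition $\pi$ is idempotent and $\pi|_Y$ equals the inclusion of $Y$ in $U$, so $d\pi|_0$ is an idempotent endomorphism of $T_0\A^n$ whose image is $T_0 Y$ and which acts as the identity on $T_0 Y$. In block form with respect to the splitting $k^n = \mathrm{span}(e_1, \ldots, e_{n-d}) \oplus T_0 Y$, this forces
\[ d\pi|_0 = \begin{pmatrix} 0 & 0 \\ B & I_d \end{pmatrix} \]
for some $d \times (n-d)$ matrix $B$. The linear change of coordinates $w_j = x_j$ for $j \leq n-d$ and $w_i = x_i + \sum_{j \leq n-d} B_{ij}\, x_j$ for $i > n-d$ conjugates $d\pi|_0$ into the standard projection onto the last $d$ coordinates; being block lower triangular with respect to the normal/tangential splitting, this change preserves the shape of the defining equations, yielding $f_i = w_i + P_i'$ with $P_i'$ still of order $\geq 2$. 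After one more shrinking of $U$ if needed, I relabel the $w_i$ as $x_i$.

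\emph{Reading off the conclusion.} In these final coordinates, for $i \leq n-d$ the relation $f_i(\pi(x)) = 0$ gives $\pi_i(x) = -P_i'(\pi(x))$, a regular function on $U$ vanishing to order $\geq 2$ at $0$; this matches $M_i = 0$ with $\partial Q_i/\partial x_j|_0 = 0$. For $i > n-d$, the normalization of $d\pi|_0$ forces $\pi_i(x) = x_i + Q_i(x)$ with $Q_i$ vanishing to order $\geq 2$, giving $M_i = x_i$ and again $\partial Q_i/\partial x_j|_0 = 0$. The main obstacle is choosing the second coordinate change so that it does not destroy the standard form of the $f_i$'s; this is what dictates the block lower triangular shape of the change, since such changes fix the normal-direction leading terms $x_i$ ($i \leq n-d$) while acting freely on the tangential directions.
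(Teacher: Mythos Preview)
Your proof is correct and follows essentially the same two–step strategy as the paper's: first a linear change of coordinates to put the defining equations of $Y$ in the form $x_i+P_i$, then a second block–lower–triangular linear change (fixing the first $n-d$ coordinates) to kill the off–diagonal block $B$ in $d\pi|_0$ while preserving that form. The only difference is cosmetic: you argue directly with tangent spaces and the idempotence of $d\pi|_0$, whereas the paper phrases the same computation via $k[\epsilon]/(\epsilon^2)$–points through its auxiliary Lemma~\ref{trivial derivative means quadratic}.
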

\begin{proof}
For purposes of skimmability, we have placed this proof at the end of the section.
\end{proof}

\subsection{Adic convergence of Taylor series}

The basic strategy for factorization will be to produce closer and
closer approximations to a particular factorization. In order to carry
this out, it is necessary to discuss notions of convergence and
approximations in the adic setting, paralleling the discussion of \cite{HHK}, Section~2.

Suppose $F_0$ is a field complete with respect to a discrete valuation
$v$ with uniformizer $t$, and let $|a| = e^{-v(a)}$ be a corresponding
norm.  Let $A = F_0[x_1, \ldots, x_{N}]$, $\mf m$ the maximal ideal at
$0$, $A_{\mf m}$ the local ring at $0$ and $\wh A = F_0[[x_1, \ldots,
x_{N}]]$ the complete local ring at $0$. For $I = (i_1, \ldots,
i_{N}) \in \mbb N^{N}$, we let $|I| = \sum_j i_j$.  Define for $r \in
\mbb R$, $r > 0$
\[\wh A_r = \left.\left\{ \sum_{I} a_I x^I \right| \lim_{|I| \to \infty}
|a_I| r^{|I|} = 0 \right\} \]
and for $f = \sum a_I x^I \in \wh A_r$, we set
\[ |f|_r = \sup_{I} |a_I| r^{|I|}. \]

We give $\mbb A^n(F_0)$ a norm via the supremum of the coordinates
\[ |(a_1, \ldots, a_N)| = \max_i\{|a_i|\} \]
and we let $D(a, r)$ be the closed disk of radius $r$ about $a \in \mbb
A^n(F_0)$ with respect to the induced metric. We note that since the values of the metric are discrete, this
disk is in fact both open and closed in the $t$-adic topology.

We note the following elementary lemma:
\begin{lem} \label{elementary lemma}
Suppose $a \in D(0, r)$, and $f, g \in \wh A_r$. Then
\begin{enumerate}
\item $f + g, fg \in \wh A_r$,
\item $|f + g|_r \leq \max\{|f|_r, |g|_r\}$,
\item \label{complete filtration} for every real number $M > 0$, the group 
\[\{f \in \wh A_r \mid |f|_r < M\} \subset \wh A_r\] 
is complete with respect to the filtered collection of subgroups $\mf
m^i \cap \wh A_r$,
\item $|f|_r$ is finite, 
\item \label{submultiplicative} $|fg|_r \leq |f|_r |g|_r$,
\item \label{shrink the radius} if $r' < r$, then $|f|_{r'} \leq
\max\{|f(0)|, \frac{r'}{r}|f|_r \}$,
\item $f(a)$ is well defined (i.e. is a convergent series), and
\item \label{bounded functions} $|f(a)| \leq |f|_r$, and if $f(0) = 0$
then $|f(a)| \leq |f|_r |a| r^{-1}$.
\end{enumerate}
\end{lem}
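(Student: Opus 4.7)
The plan is to prove each item as a direct consequence of the definition $|f|_r = \sup_I |a_I| r^{|I|}$ together with the non-archimedean nature of $|\cdot|$ on $F_0$. None of the items require deep ideas; the bulk of the work is simply applying the ultrametric inequality in the right place, so the argument is almost purely organizational, and I would dispatch the items essentially in order.

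For items (1), (2), (4) and (5), I would argue directly from coefficients. The coefficient of $x^I$ in $f+g$ is $a_I + b_I$, and $|a_I + b_I| r^{|I|} \le \max(|a_I| r^{|I|}, |b_I| r^{|I|})$ yields (2) on taking the supremum and, since the right side tends to $0$, places $f+g$ in $\wh A_r$. For products, $c_I := \sum_{J+K=I} a_J b_K$ satisfies $|c_I| r^{|I|} \le \max_{J+K=I}(|a_J| r^{|J|})(|b_K| r^{|K|}) \le |f|_r |g|_r$, which is (5); to see $fg \in \wh A_r$ I fix $\epsilon > 0$, pick $N$ so that $|a_J| r^{|J|} < \epsilon/|g|_r$ for $|J| > N$ and symmetrically for $b_K$, and observe that any $J+K = I$ with $|I| > 2N$ forces $|J| > N$ or $|K| > N$. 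Item (4) is immediate, since a null sequence is bounded, so the supremum is finite.

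For (6), if $|I| \ge 1$ then $(r')^{|I|} = (r'/r)^{|I|} r^{|I|} \le (r'/r) r^{|I|}$, so $|a_I|(r')^{|I|} \le (r'/r) |f|_r$; the $I=0$ contribution is $|a_0| = |f(0)|$, giving the claimed maximum. For (7) and (8), any $a$ with $|a| \le r$ gives $|a_I a^I| \le |a_I| r^{|I|} \to 0$, so $f(a)$ converges, and the ultrametric applied to its partial sums yields $|f(a)| \le \sup_I |a_I| |a|^{|I|} \le |f|_r$. When $f(0) = 0$ the $I = 0$ term is absent, so every surviving term carries at least one factor $|a|/r \le 1$ to spare, improving the bound to $|f|_r \cdot |a|/r$.

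The principal obstacle is (3), the completeness of the bounded subgroup $\{f : |f|_r < M\}$ under the filtration $\mf m^i \cap \wh A_r$. For a Cauchy sequence $(f_n)$ in this group, the coefficients $c^{(n)}_I$ stabilize in $n$ for each fixed $I$, yielding a candidate limit $f = \sum c_I x^I$ as a formal power series, and $|c_I| r^{|I|} < M$ follows by taking $n$ large enough that $c_I = c^{(n)}_I$. The delicate point is to verify that $|c_I| r^{|I|} \to 0$, so that $f$ actually lies in $\wh A_r$ rather than merely in the $\mf m$-adic completion of the polynomial ring; I would handle this by choosing, for each $\epsilon > 0$, an index $n_0$ controlling the Cauchy tail in conjunction with the decay threshold of the individual $f_{n_0}$, and using coefficient stabilization to prevent the limit coefficients from exceeding $\epsilon r^{-|I|}$ for $|I|$ large.
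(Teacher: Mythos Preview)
The paper does not prove this lemma; it merely labels it ``elementary.'' Your treatment of items (1), (2), and (4)--(8) is correct and is exactly the kind of coefficient-by-coefficient verification one would expect.

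There is, however, a genuine gap in your argument for (3), and in fact the assertion as literally stated is false. Take $N=1$, $r=1$, $M=2$, and let $f_n = \sum_{k=1}^{n} x^k$. Each $f_n$ is a polynomial with $|f_n|_1 = 1 < M$, and $f_n - f_m \in \mf m^{\min(m,n)+1}$, so $(f_n)$ is Cauchy for the filtration $\mf m^i \cap \wh A_1$. Yet the only possible limit $\sum_{k\ge 1} x^k$ has $|c_k|\cdot 1^k = 1 \not\to 0$ and hence does not lie in $\wh A_1$ at all. The flaw in your sketch is that the $\mf m$-adic Cauchy condition only stabilizes \emph{low}-degree coefficients; it imposes no size constraint whatsoever on the high-degree coefficients of $f_n - f_m$, so ``controlling the Cauchy tail in conjunction with the decay threshold of $f_{n_0}$'' cannot force decay of the limit coefficients --- the tail coefficients of the limit are determined by ever-later terms of the sequence, not by $f_{n_0}$. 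What the paper actually uses, in the proof of Lemma~\ref{regular to adic}, is the weaker and true statement that a series $\sum_i g_i$ with $g_i \in \mf m^i \cap \wh A_r$ and $|g_i|_r \to 0$ converges in $\wh A_r$ to an element of norm at most $\sup_i |g_i|_r$; that is what you should prove in place of (3).
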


\begin{lem} \label{regular to adic}
Suppose $f \in A_{\mf m}$. Then for all $\varepsilon \geq |f(0)|$ with $\varepsilon > 0$, there exists
$r > 0$ such that $f \in \wh A_r$ and $|f|_r < \varepsilon$. Further, for any $\delta > 0$ we may choose $r <
\delta$.
\end{lem}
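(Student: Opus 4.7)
The plan is to express $f$ as a ratio of polynomials $g/h$ with $h(0) \neq 0$, and to expand the denominator as a geometric series. Since $f \in A_{\mf m}$, we may write $f = g/h$ with $g,h \in F_0[x_1,\ldots,x_N]$ and $h(0) \neq 0$; after rescaling we assume $h(0) = 1$ and write $h = 1 - h_1$ where $h_1 \in A$ is a polynomial satisfying $h_1(0) = 0$. The formal identity $1/h = \sum_{k \geq 0} h_1^k$ holds in $\wh A$, so the essential task is to show that this series actually converges in $\wh A_r$ for suitable $r$, and to bound its $|\cdot|_r$-norm.

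The key observation is that $h_1$ is a polynomial with no constant term, so $|h_1|_r$ is a finite maximum of quantities $|c_I| r^{|I|}$ with $|I| \geq 1$; hence it tends to $0$ as $r$ does. For any prescribed $\eta \in (0,1)$ and $\delta > 0$, we may therefore choose $r < \delta$ small enough that $|h_1|_r \leq \eta$. Submultiplicativity (Lemma~\ref{elementary lemma}(\ref{submultiplicative})) then gives $|h_1^k|_r \leq \eta^k$, so the partial sums $S_N = \sum_{k=0}^{N} h_1^k$ are uniformly $|\cdot|_r$-bounded by $1$, while $S_N - S_M \in \mf m^{\min(N,M)+1}$ makes them Cauchy in the $\mf m$-adic filtration. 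The completeness assertion of Lemma~\ref{elementary lemma}(\ref{complete filtration}) then produces a limit $u \in \wh A_r$ with $|u|_r \leq 1$; a direct computation verifies $u(1 - h_1) = 1$ in $\wh A$, so $u = 1/h$.

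With $1/h \in \wh A_r$ in hand, $f = g \cdot (1/h) \in \wh A_r$ since $\wh A_r$ is a ring (Lemma~\ref{elementary lemma}(1)) and $g$ is a polynomial. To bound $|f|_r$, write $g = g(0) + g_1$ with $g_1(0) = 0$; the same shrinking-$r$ trick applied to $g_1$ shows $|g_1|_r \to 0$, so that $|g|_r \to |g(0)|$. Consequently $|f|_r \leq |g|_r |u|_r$ can be made arbitrarily close to $|f(0)| = g(0)$; in particular, for any $\varepsilon$ strictly greater than $|f(0)|$ (which is the substantive case of the hypothesis, since $|f|_r$ always dominates its constant term $|f(0)|$), a sufficiently small $r$, which we may simultaneously take below $\delta$, forces $|f|_r < \varepsilon$.

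The main technical hurdle is ensuring that the geometric series for $1/h$ truly lies in $\wh A_r$ rather than merely in the ambient ring $\wh A$ of formal power series. This is precisely the role of the bounded completeness statement in Lemma~\ref{elementary lemma}(\ref{complete filtration}); the remainder is routine application of the ultrametric and submultiplicative estimates of that lemma.
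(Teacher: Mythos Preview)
Your proof is correct and follows essentially the same approach as the paper: write $f = g/h$, expand $1/h$ as a geometric series in a polynomial vanishing at the origin, use the completeness statement of Lemma~\ref{elementary lemma}(\ref{complete filtration}) to land the series in $\wh A_r$, and then shrink $r$ to control $|g|_r$. Your normalization $h(0)=1$ is a mild simplification of the paper's use of an approximate inverse $h'$ with $hh'-1\in\mf m$, and your remark that the case $\varepsilon=|f(0)|>0$ is vacuous (since $|f|_r\geq|f(0)|$) is a useful clarification.
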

\begin{proof}
Write $f = g/h$, $g, h \in A$ with $h \not\in \mf m$. Since $A/\mf m$
is a field, we may find $h' \in A$ with $hh' - 1 = -b \in \mf m$.
Therefore, in $\wh A$, we have $f = gh'(\sum b^i)$. Since $g, h', b$ are
polynomials, they are in $A_r$ for any $r$. Further, by
Lemma~\ref{elementary lemma}(\ref{shrink the radius}), we may reduce
$r$ so that $|gh'|_r \leq |f(0)|$, and since $b(0) = 0$,
we may also ensure $|b|_r < 1$. In doing this, note that we may also ensure that $r < \delta$. We note that by
Lemma~\ref{elementary lemma}(\ref{submultiplicative}), $|b^i|_r < 1$.  Now, by Lemma~\ref{elementary
lemma}(\ref{complete filtration}), it follows that $|\sum b^i|_r < 1$. Therefore $|f|_r = |gh'\sum b^i|_r
< \varepsilon$ as desired.
\end{proof}

\begin{lem} \label{adic vs Zariski}
The $t$-adic topology on $\mbb A^N(F_0)$ is finer than the Zariski
topology.
\end{lem}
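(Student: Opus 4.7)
The plan is to check the inclusion of topologies on a base. The Zariski open sets of $\mbb A^N(F_0)$ are generated by basic opens of the form $D(f) = \{b \in \mbb A^N(F_0) : f(b) \neq 0\}$ for $f \in F_0[x_1, \ldots, x_N]$, so it suffices to show that each such $D(f)$ is a $t$-adic neighborhood of each of its points. Fix $f$ and $a \in D(f)$ and translate to the origin: write $h(y) = f(a+y) = f(a) + g(y)$, where $g \in F_0[y_1, \ldots, y_N]$ is a polynomial satisfying $g(0) = 0$. The task then reduces to finding some $r > 0$ for which $f(a+b) \neq 0$ for every $b$ in the closed $t$-adic disk $D(0,r)$.

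The crux is the ultrametric inequality: if $|g(b)| < |f(a)|$, then
\[|f(a+b)| = |f(a) + g(b)| = |f(a)| > 0,\]
so I need only produce $r > 0$ for which $|g(b)| < |f(a)|$ uniformly on $D(0,r)$. Applying Lemma~\ref{regular to adic} to $g$ with $\varepsilon = |f(a)|$ (which is positive since $a \in D(f)$, and dominates $|g(0)| = 0$ trivially) yields such an $r$ with $g \in \wh A_r$ and $|g|_r < |f(a)|$. Lemma~\ref{elementary lemma}(\ref{bounded functions}) then bounds $|g(b)| \leq |g|_r < |f(a)|$ for every $b \in D(0,r)$, which by the ultrametric calculation above forces $f(a+b) \neq 0$ throughout the disk.

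There is no genuine obstacle here: the statement is simply a standard ultrametric continuity property for polynomial functions, and all of the analytic estimates required have already been encapsulated in Lemmas~\ref{elementary lemma} and~\ref{regular to adic}. The only mildly subtle feature, absent from the Archimedean analogue, is that the strict inequality $|g(b)| < |f(a)|$ produces an exact equality $|f(a+b)| = |f(a)|$, so nonvanishing on the entire disk comes for free once $r$ has been chosen, with no further quantitative bookkeeping required.
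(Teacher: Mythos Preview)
Your proof is correct and essentially identical to the paper's own argument: both translate the point to the origin, split off the constant term, invoke Lemma~\ref{regular to adic} with $\varepsilon = |f(a)|$ to bound the remainder, and finish with Lemma~\ref{elementary lemma}(\ref{bounded functions}) and the ultrametric inequality. The only cosmetic difference is that the paper literally assumes $p=0$ after translation, whereas you keep the translation explicit via $h(y)=f(a+y)$.
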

\begin{proof}
It suffices to show that if $p \in \mbb A^N(F_0)$ and $f$ is a polynomial
not vanishing on $p$, we may find a disk about $p$ on which $f$ is
nonvanishing. Without loss of generality, we may apply a translation
and assume that $p = 0$. Let $g = f - f(0)$. By Lemma~\ref{regular to
adic}, since $g(0) = 0$, we may find an $r > 0$ such that $f \in \wh
A_r$ and such that $|g|_r < |f(0)|$ (using $\varepsilon = |f(0)|$). In particular, if $a \in \mbb
A^N(F_0)$ with $|a| < r$, we have $|g(a)| \leq |g|_r < |f(0)|$ by
Lemma~\ref{elementary lemma}(\ref{bounded functions}). Therefore, for
such an $a$, $f(a) = g(a) + f(0) \neq 0$. Therefore $f$ does not vanish
on a disk of radius $r$ about the origin as desired.
\end{proof}

\begin{prop}[Linear approximations and error term] \label{revised taylor with error}
Suppose $f \in \wh A_r$ for $r \leq 1$. Write
\[ f = c_0 + L + P \text{, where } P(\vec x) =
\sum_{|\nu| \geq 2} c_\nu x^\nu, \]
and $L$ is a linear form with coefficients in $F_0$ and all $c_\nu \in
F_0$. Suppose $|L + P|_r \leq 1$. Let $0 < \varepsilon
\leq |t| r^2$, and suppose $a, h \in \mbb A^N(F_0)$ with $|h|,|a| \leq
\varepsilon$. Then
\[ |f(a + h) - f(a) - L(h) | \leq |t||h|.\]
\end{prop}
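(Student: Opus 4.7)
The plan is to observe that the constant term and the linear part of $f$ cancel neatly in $f(a+h) - f(a) - L(h)$, leaving only the genuinely quadratic-and-higher contribution. Writing $f = c_0 + L + P$ with $P(\vec x) = \sum_{|\nu| \geq 2} c_\nu x^\nu$, one checks immediately that $f(a+h) - f(a) - L(h) = P(a+h) - P(a)$, so the whole task reduces to estimating $|P(a+h) - P(a)|$.

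For this I would expand via the multinomial identity
\[ (a+h)^\nu - a^\nu = \sum_{\substack{\mu \leq \nu \\ \mu \neq 0}} \binom{\nu}{\mu} a^{\nu-\mu} h^\mu, \]
obtaining
\[ P(a+h) - P(a) = \sum_{|\nu| \geq 2} \sum_{\substack{\mu \leq \nu \\ \mu \neq 0}} \binom{\nu}{\mu} c_\nu\, a^{\nu-\mu} h^\mu. \]
Both $P(a)$ and $P(a+h)$ converge by Lemma~\ref{elementary lemma}(7), since $|a|, |a+h| \leq \varepsilon \leq r$, so the double sum is genuinely convergent. The strategy is to bound each monomial $c_\nu a^{\nu-\mu} h^\mu$ uniformly by $|t||h|$ and then invoke the strong triangle inequality.

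Three ingredients feed this coefficient estimate. First, since $L$ and $P$ have no overlapping monomials, the hypothesis $|L+P|_r \leq 1$ decouples to $|L|_r, |P|_r \leq 1$, giving $|c_\nu| \leq r^{-|\nu|}$. Second, $\binom{\nu}{\mu}$ is an integer, hence has norm at most $1$ in $F_0$. Third, since $|\mu| \geq 1$, I factor one copy of $|h|$ out of $|a^{\nu-\mu} h^\mu|$: writing $M = \max(|a|, |h|) \leq \varepsilon \leq |t| r^2$, one has $|a^{\nu-\mu} h^\mu| \leq |h| \cdot M^{|\nu|-1}$. Combining,
\[ |c_\nu\, a^{\nu-\mu} h^\mu| \;\leq\; r^{-|\nu|} \cdot |h| \cdot (|t|r^2)^{|\nu|-1} \;=\; |h| \cdot |t|^{|\nu|-1} r^{|\nu|-2} \;\leq\; |h| \cdot |t|, \]
where the last inequality uses $|t| \leq 1$, $r \leq 1$, and $|\nu| \geq 2$.

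The uniform bound $|t||h|$ on every summand, together with the non-archimedean property of the norm, yields $|P(a+h) - P(a)| \leq |t||h|$, as required. There is no deep obstacle; the only care needed is in the exponent bookkeeping. One must set aside exactly one factor of $|h|$ up front and absorb the remaining $|\nu|-1$ factors of $a$'s and $h$'s into $M^{|\nu|-1} \leq (|t|r^2)^{|\nu|-1}$, so that the resulting $|t|^{|\nu|-1} r^{|\nu|-2}$ precisely cancels the $r^{-|\nu|}$ blow-up of $|c_\nu|$ and still leaves a single $|t|$ to spare --- this is exactly why the hypothesis is stated as $\varepsilon \leq |t|r^2$.
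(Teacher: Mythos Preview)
Your proof is correct and follows essentially the same approach as the paper: both reduce to $P(a+h)-P(a)$, expand each $(a+h)^\nu - a^\nu$ multinomially, and bound every resulting monomial by $|t||h|$ using $|c_\nu|\leq r^{-|\nu|}$ together with $\varepsilon \leq |t|r^2$. The only difference is cosmetic---the paper writes $|h|=\varepsilon|t|^s$ and tracks the auxiliary exponent $s$ through the estimate, whereas you factor out a single copy of $|h|$ directly and absorb the remaining $|\nu|-1$ factors into $M^{|\nu|-1}\leq(|t|r^2)^{|\nu|-1}$, which is arguably tidier bookkeeping.
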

\begin{proof}
This proof is a very slight modification of Lemma~{2.2} in \cite{HHK}.
Choose a real number $s$ so that we may write $|h| = \varepsilon |t|^s$.  We may rearrange the quantity of
interest as:
\[f(a + h) - f(a) - L(h) = \sum_{|\nu| \geq 2} c_\nu \left((a +
h)^\nu - a^\nu \right).\]
Since the absolute value is nonarchimedean, it suffices to show that
for every term $m = c_\nu x^\nu$ with $|\nu| \geq 2$ we have
\[|m(a + h) - m(a)| \leq \varepsilon|t|^{s+1}.\]

For a given $\nu$ with $|\nu| \ge 2$, consider the expression $(x +
x')^\nu - x^\nu$, regarded as a homogeneous element of degree $j =
|\nu|$ in the polynomial ring
$F_0[x_1, \ldots, x_{N}, x_1', \ldots, x_{N}']$.
Since the terms of degree $j$ in $x_1, \ldots, x_{N}$ cancel, the result
is a sum of terms of the form $\lambda \ell$ where $\lambda$ is an
integer and $\ell$ is a monomial in the variables $x, x'$ with total
degree $d$ in $x_1, \ldots, x_{N}$ and total degree $d'$ in $x_1',
\ldots, x_{N}'$, such that $d + d' = j$ and $d < j$.  Hence $d' \ge 1$.
Consequently, for each term of this form, 
\[|\lambda \ell(a, h)| \leq |\ell(a, h)| \leq \varepsilon^d(\varepsilon
|t|^s)^{d'} = \varepsilon ^ {j} |t|^{sd'}
\leq \varepsilon^{j}|t|^s.\]
Since $(a + h)^\nu - a^\nu$ is a sum of such terms, and the norm is
nonarchimedean, we conclude $|(a + h)^\nu - a^\nu| \leq
\varepsilon^{j}|t|^s$.  

Since $m = c_\nu x^\nu$, it follows that  
\[|m(a + h) - m(a)| \leq |c_\nu|\varepsilon^{j} |t|^s  \leq
r^{-j} \varepsilon^{j} |t|^s.\]
Now $\varepsilon \leq |t|r^2$, so $\varepsilon^{j-1} \leq |t|^{j-1}r^{2j
- 2}$.  Since $|t| < 1$, $r \le 1$, and $j \geq 2$, we have
\[\varepsilon^{j-1} \leq |t|^{j-1}r^{j + j - 2} \leq |t|r^j.\]
Rearranging this gives the inequality $(\varepsilon/r)^j \leq
\varepsilon |t|$ and so $(\varepsilon/r)^j |t|^s \leq \varepsilon
|t|^{s+1}$. Therefore 
\[|m(a + h) - m(a)| \leq r^{-j} \varepsilon^j |t|^s \leq \varepsilon
|t|^{s+1} = |t| |h|,\]
as desired.

\end{proof}

\begin{lem}[Local bijectivity / Inverse function theorem] \label{local bijectivity}
Suppose $f : U \to V$ is a morphism between Zariski open subschemes of
$\mbb A^d_{F_0}$ containing the origin and such that $f(0) = 0$.
Suppose further, that after writing the coordinates of $f$ as power series
in $\wh A$, we have $f = (f_1, \ldots, f_d)$ with $f_i = x_i + Q_i$ and
$Q_i$ consisting of terms of degree at least $2$. Then we may find
$t$-adic neighborhoods $U' \subset U(F_0)$ and $V' \subset V(F_0)$ of $0$
such that $f$ maps $U'$ bijectively onto $V'$. Further, we may assume that
$U'$ and $V'$ are disks about the origin of equal radii.
\end{lem}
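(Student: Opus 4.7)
The plan is to prove this by a non-archimedean Newton iteration, using the approximation bound of Proposition~\ref{revised taylor with error}. First I would set up the ambient estimates: each $Q_i$ is a polynomial vanishing at $0$, so Lemma~\ref{regular to adic} yields a radius $r \leq 1$ with each $Q_i \in \wh A_r$ and $|Q_i|_r \leq 1$; consequently $f_i = x_i + Q_i$ satisfies $|x_i + Q_i|_r \leq 1$, which is the hypothesis $|L+P|_r \leq 1$ of Proposition~\ref{revised taylor with error} with linear part $L_i = x_i$. By Lemma~\ref{adic vs Zariski}, and shrinking $r$ if needed, I may also arrange that $D(0,r) \subset U(F_0)$ and $D(0,r) \subset V(F_0)$.

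Next I would fix $\varepsilon$ with $0 < \varepsilon \leq |t| r^2$ and set $U' = V' = D(0, \varepsilon)$. Since $|t|<1$ and $r \leq 1$, one has $\varepsilon \leq r$, so $U'$ and $V'$ sit inside the respective Zariski opens. Applying Proposition~\ref{revised taylor with error} coordinatewise to $f=(f_1,\ldots,f_d)$ gives the key estimate
\[|f(a+h) - f(a) - h| \leq |t||h|\]
whenever $a, a+h \in D(0,\varepsilon)$, since the linear part of each $f_i$ is just $x_i$.

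Injectivity is now immediate: if $a, a+h \in U'$ with $f(a) = f(a+h)$, the estimate forces $|h| \leq |t||h|$ and hence $h = 0$. For surjectivity, given $b \in V'$, I would produce a preimage by iteration. Set $a_0 = 0$, $a_{n+1} = a_n + (b - f(a_n))$, and write $\eta_n = b - f(a_n)$. Then $\eta_0 = b$ and
\[\eta_{n+1} = b - f(a_n + \eta_n) = -\bigl(f(a_n+\eta_n) - f(a_n) - \eta_n\bigr),\]
so the estimate gives $|\eta_{n+1}| \leq |t||\eta_n|$. By induction $|\eta_n| \leq |t|^n |b|$ and $|a_n| \leq \varepsilon$. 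Completeness of $F_0$ makes $a_n$ converge to some $a \in D(0,\varepsilon) = U'$, and continuity of the polynomial map $f$ in the $t$-adic topology (Lemma~\ref{adic vs Zariski}) gives $f(a) = b$, showing $f$ maps $U'$ onto $V'$.

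The main obstacle to watch is the bookkeeping needed to guarantee that the iterates never escape the open domain where the power-series estimates are valid, and that the Proposition's hypothesis $|a|, |h| \leq |t|r^2$ is maintained at every step. Both are controlled by the uniform bound $|a_n| \leq \varepsilon \leq |t| r^2 \leq r$, which keeps every iterate inside $D(0,r) \subset U(F_0)$; the ultrametric inequality then ensures the error terms $\eta_n$ remain of norm at most $\varepsilon$ throughout the induction.
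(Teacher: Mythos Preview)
Your argument is correct and follows essentially the same route as the paper: set up the radius via Lemma~\ref{regular to adic}, choose $\varepsilon \leq |t|r^2$, then use Proposition~\ref{revised taylor with error} to run a Newton iteration for surjectivity and to get the contraction estimate for injectivity. Two small slips to clean up: the $Q_i$ are rational functions regular at $0$ (not polynomials in general), and you should state explicitly that $f(U') \subset V'$ (which follows from your key estimate with $a=0$), since bijectivity of $f\colon U' \to V'$ requires both containments.
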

\begin{proof}
By Lemma~\ref{regular to adic}, since $f(0) = 0$, we may find $0 < r \leq
1$ such that $f \in \wh A_r$ and $|f|_r \leq 1$.  Choose $\varepsilon
\leq |t|r^2$ as in the statement of Proposition~\ref{revised taylor
with error} and such that $D_0(\varepsilon) \subset V(F_0)$ and
$D_0(\varepsilon) \subset U(F_0)$. Let $V' = D_0(\varepsilon) \subset
V(F_0)$ and $U' = D_0(\varepsilon) \subset U(F_0)$. 
We claim that for $b \in U'$, we have $|f(b)| \leq \varepsilon$ and so
$f(b) \in V'$. To see this, we note that $Q_i \in \wh A_r$ and $|Q_i|_r \leq 1$, and hence we may apply
Proposition~\ref{revised taylor with error} (with $0$ linear and constant term) to see that 
$|Q_i(b)| \leq |t||b| < |b| = \max\{|b_i|\}$.  By the nonarchimedean
property, this gives 
\begin{multline*}
|f(b)| = \max\{|f_i(b)|\} = \max\{|b_i + Q_i(b)| \} \leq \max\{|b_i|,
|Q_i(b)|\} \\ = \max\{|b_j|\} = |b|.
\end{multline*}

We consider first surjectivity.  Note that both $U'$ and $V'$ are both
closed and open.  Since they are closed in a complete metric space,
they contain all limits of their Cauchy sequences.  Let $a \in V'$, and
let $b_0 = 0$. We will inductively construct elements $b_i \in U'$ such
that $|f(b_i) - a| \leq \varepsilon |t|^i$.  In particular, since $|a|
\leq \varepsilon$, we have $|f(b_0) - a| = |a| \leq \varepsilon$.
Assuming we have constructed $b_{i-1}$, we let $h = a - f(b_{i-1})$,
and note $|h| \leq \varepsilon |t|^{i-1}$ by hypothesis, and $|b_{i-1}|
\leq \varepsilon$ since $b_{i-1} \in U'$.  Therefore, by
Proposition~\ref{revised taylor with error}, we have 
\[ |f(b_{i-1} + h) - f(b_{i-1}) - h| \leq |t||h| \leq \varepsilon |t|^i.
\]
By setting $b_i = b_{i-1} + h$, we find that, since $f(b_{i-1}) + h
= a$, we have 
\begin{align*}
|f(b_i) - a|  &= |f(b_{i-1} + h) - a| \\
 &= |f(b_{i-1} + h) - f(b_{i-1}) - h| \leq |t||h| \leq \varepsilon
 |t|^i
\end{align*}
as desired. Since $b_i$ is a Cauchy sequence, using the completeness of $U'$, we may set $b = \lim b_i \in U'$
and we find by continuity that $f(b) = a$ as desired.

Next, we consider injectivity. Suppose $a, b \in U'$, let $h = b - a$
and suppose $h \neq 0$. We need to show that $f(a) \neq f(b)$.  Since
the valuation is nonarchimedean we have $a, h \leq \varepsilon$.  Let
$E = f(a + h) - f(a) - h$. Then we find $|E| \leq |h||t|$ by
Proposition~\ref{revised taylor with error}. But this means in
particular that $|E + h| = |h|$ by the nonarchimedean triangle
inequality. Therefore
\[ |f(b) - f(a)| = |f(a + h) - f(a)| = |E + h| = |h| \neq 0 \]
so $f(b) \neq f(a)$ as desired.
\end{proof}

\begin{lem} \label{bijective to the ball}
Suppose that $Y$ is a $d$-dimensional variety which is a closed retraction
of an open subscheme $U \subset \mbb A^n$ in the standard form of
Lemma~\ref{standard retraction} with respect to morphisms $i, p$. Then we
may find a $t$-adic neighborhood $V'$ of $0 \in Y$ (regarding $Y$ as a
subscheme of $U$ via $i$) such that the composition $V' \to \mbb A^n(F_0)
\to \mbb A^d(F_0)$ is bijective onto a $t$-adic disk, where the last map
is given by projection onto the last $d$ coordinates.
\end{lem}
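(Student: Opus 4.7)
The plan is to reduce the statement to a direct application of the adic inverse function theorem (Lemma~\ref{local bijectivity}), by packaging the defining equations of $Y$ into a convenient coordinate change on the ambient $\mbb A^n$.

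First, I would invoke the standard form provided by Lemma~\ref{standard retraction}: near $0$, $Y$ is cut out inside $U$ by polynomials $f_i = x_i + P_i$ for $i = 1,\ldots,n-d$ with each $P_i$ having all monomials of degree at least $2$. Using these, I would define the polynomial morphism
\[ \phi \colon \mbb A^n \longrightarrow \mbb A^n, \qquad \phi(x_1,\ldots,x_n) = \bigl(f_1(x),\ldots,f_{n-d}(x),\,x_{n-d+1},\ldots,x_n\bigr). \]
Each coordinate of $\phi$ is of the form $x_i$ plus terms of degree at least $2$, and $\phi(0) = 0$, so $\phi$ satisfies the hypotheses of Lemma~\ref{local bijectivity}. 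Applying that lemma (with its ``$d$'' taken to be our $n$) yields $t$-adic disks $U_0', V_0' \subset \mbb A^n(F_0)$ of equal radius $\varepsilon$ about $0$ on which $\phi$ restricts to a bijection. After shrinking $\varepsilon$ if necessary so that $U_0' \subset U(F_0)$, I would set $V' := Y(F_0) \cap U_0'$, which is a $t$-adic neighborhood of $0 \in Y(F_0)$.

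The key observation is then two-fold. On the one hand, a point $x \in U_0'$ lies in $Y$ precisely when $f_1(x) = \cdots = f_{n-d}(x) = 0$, i.e., when the first $n-d$ coordinates of $\phi(x)$ vanish, so $\phi$ restricts to a bijection $V' \xrightarrow{\sim} V_0' \cap \{y_1 = \cdots = y_{n-d} = 0\}$, and the right-hand side is naturally a $t$-adic disk of radius $\varepsilon$ in $\mbb A^d(F_0)$ under the projection $\pi$ onto the last $d$ coordinates. On the other hand, since $\phi$ leaves the last $d$ coordinates untouched, $\pi \circ \phi = \pi$ as maps $\mbb A^n \to \mbb A^d$, so the composition $V' \hookrightarrow \mbb A^n(F_0) \xrightarrow{\pi} \mbb A^d(F_0)$ coincides with $\pi \circ \phi|_{V'}$ and is therefore bijective onto this disk, as required.

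I do not anticipate a serious obstacle here: the substantive analytic content has already been packaged into Lemma~\ref{local bijectivity}, and the only small conceptual leap is to recognize that one should use the defining equations $f_i$ themselves as the first $n-d$ coordinates of $\phi$. With that choice, the inverse function theorem simultaneously realizes $Y$ as a ``graph'' near $0$ and identifies the projection onto the last $d$ coordinates with a bijection onto a disk.
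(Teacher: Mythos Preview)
Your argument is correct and is in fact cleaner than the paper's own proof; the two take genuinely different routes.

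The paper splits the lemma into separate injectivity and surjectivity arguments. Injectivity is proved by a direct estimate: given $a,b \in V'$ with the same image under the projection $g$, one applies Proposition~\ref{revised taylor with error} to one of the defining polynomials $f_i$ to force $f_i(b) \neq 0$, contradicting $b \in Y$. Surjectivity is then proved by applying Lemma~\ref{local bijectivity} to the $d$-dimensional composition $\mbb A^d \cap U \hookrightarrow U \xrightarrow{p} Y \xrightarrow{i} U \xrightarrow{g} \mbb A^d$, which has the required shape $x_i + Q_i$ because of the standard form of $ip$ in Lemma~\ref{standard retraction}. So the paper's surjectivity argument genuinely uses the retraction maps $i,p$.

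Your approach packages everything into a single application of Lemma~\ref{local bijectivity} to the $n$-dimensional map $\phi = (f_1,\ldots,f_{n-d},x_{n-d+1},\ldots,x_n)$. Once $\phi$ is a bijection between equal-radius disks, the identifications $V' = \phi^{-1}(\{y_1=\cdots=y_{n-d}=0\})$ and $\pi\circ\phi = \pi$ give both injectivity and surjectivity at once. This is more economical in two respects: it avoids the separate hand-estimate for injectivity, and it uses only the standard form of the \emph{defining equations} $f_i$, never the standard form of the retraction $ip$. In effect you have observed that this lemma is really an implicit function theorem statement about a smooth complete intersection in normal form, and the retraction structure is irrelevant here (it is only needed later, in Corollary~\ref{local bijection corollary}).
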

\begin{proof}
As in Lemma~\ref{standard retraction}, we suppose that $Y$ is the zero
locus of polynomials $f_1, \ldots, f_{n-d}$ with
\[f_i = x_i + P_i\]
where $P_i$ is a polynomial in the $x_j$'s each of whose terms are of degree at least $2$. Using
Lemma~\ref{regular to adic}, we may choose $0 < r \leq 1$ such that $f_i
\in \wh A_r$ and $|f_i|_r \leq 1$ for each of the finitely many functions
$f_i$.  Choose $\varepsilon \leq |t| r^2$ as in Proposition~\ref{revised
taylor with error}. Let $g : U \to \mbb A^d$ the projection onto the
last $d$ coordinates.
Let $U' \in \mbb A^d(F_0)$ be the disk about the origin of radius $\varepsilon$. Let $V'$ be the intersection
of $g^{-1}U'$ with $Y(F_0)$. 

Suppose $a \in V'$ and $b \in U'$ with with $a \neq b$ and $g(a) = g(b)$.
We claim that $b \not\in V'$. In particular, this would imply that
$g|_{V'}$ is injective. To see $b \not\in V'$, first let $h = b - a$. If
$g(a) = g(b)$, then by definition of $g$, the last $d$ coordinates of $a$
and $b$ must match. Since $a \neq b$, we therefore know that $x_i(h) \neq
0$ for some $i = 1, \ldots, n-d$ where $x_i$ is the $i$'th coordinate
function on $\mbb A^d$. We may therefore choose $i$ such that $|x_i(h)|$
has the largest possible value, and in particular, we then would have
$|x_i(h)| = |h|$.  But, estimating $|f_i(b) - f_i(a)| = |f_i(a+h) -
f_i(a)|$ using Proposition~\ref{revised taylor with error}, we find 
\[|f_i(a+h) - f_i(a) - x_i(h)| \leq |t||h|.\]
We claim that $|f_i(a+h) - f_i(a)| \geq |h|$ and in particular that
$f_i(a) \neq f_i(b)$. To see this must hold, assume by contradiction that
$|f_i(a+h) - f_i(a)| < |h|$. In this case, we have
\[|f_i(a+h) - f_i(a) - x_i(h)| = |x_i(h)|\]
since $|x_i(h)| = |h|$. Therefore we have $|h| \leq |t||h|$ which is a
contradiction since $|t| < 1$.  Therefore, $f_i(b) \neq f_i(a)$. Since
$V'$ lies in the zero locus of the functions $f_i$, we have $f_i(a) = 0
\neq f_i(b)$, and so $b \not\in V'$ as claimed. Therefore $g|_{V'}$ is
injective.

By construction, $g|_{V'}$ has image entirely in the ball of radius
$\epsilon$ in $\mbb A^d(F_0)$ about the origin. We claim that it in fact
surjects onto this ball (possibly after shrinking $\epsilon$).
For this, let $a \in U'$, and consider its image $b = g(a) \in \mbb
A^d(F_0)$. Using the form for the retraction in Lemma~\ref{standard
retraction}, we may apply Lemma~\ref{local bijectivity} to the
composition (shrinking $\varepsilon$ if necessary)
\[\xymatrix{
\mbb A^d \cap U \ar[r] & U \ar[r]^p & Y \ar[r] \ar@/^1pc/[rr]^{g|_{Y}} & U
\ar[r]_g & \mbb A^d
}\]
By Lemma~\ref{local bijectivity}, we may find an inverse image $b'$ of $b$ in
$\mbb A^d$ of norm less than $\varepsilon$. Consequently, by
definition, the image of $b'$ in $Y$ must actually live in $V'$, and
this is an inverse image for $a$ as desired.
\end{proof}

\begin{cor}\label{local bijection corollary}
In the notation of the previous lemma, we may choose $t$-adic
neighborhoods of the origin $U' \in \mbb A^d$ and $V' \in Y$ such that the
composition $U' \to U \to Y$ takes $U'$ bijectively to $V'$ and the
composition $V' \to Y \to U \to \mbb A^n \to \mbb A^d$ takes $V'$
bijectively to $U'$.
\end{cor}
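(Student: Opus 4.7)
The corollary is essentially a repackaging of work already done in Lemma~\ref{bijective to the ball}, so the plan is to take the same $U'\subset\mbb A^d(F_0)$ and $V'\subset Y(F_0)$ produced there (possibly shrinking $\varepsilon$ once more) and then identify the two desired bijections without redoing the hard analytic work.

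The second map in the statement, $V'\to Y\to U\to\mbb A^n\to\mbb A^d$, is literally the projection $g|_{V'}$ studied in Lemma~\ref{bijective to the ball}; that lemma already asserts it is a bijection onto the disk $U'$, so nothing further is needed. For the first map $U'\to U\to Y$ (inclusion into $\mbb A^d\cap U$ followed by the retraction $p$), I would invoke the other half of the proof of Lemma~\ref{bijective to the ball}: the composition
\[\mbb A^d\cap U \to U \xrightarrow{p} Y \xrightarrow{i} U \xrightarrow{g} \mbb A^d\]
is in the standard form of Lemma~\ref{standard retraction}, and so by Lemma~\ref{local bijectivity} it restricts to a bijection on some disk about the origin in $\mbb A^d(F_0)$. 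After shrinking $\varepsilon$ so that this disk is also $U'$, I have that the composite $g\circ i\circ p\circ \iota$ is a bijection $U'\to U'$, where $\iota$ denotes the inclusion $U'\hookrightarrow U$.

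To conclude, I need a short diagram chase. For any $u\in U'$, the point $p(\iota(u))\in Y(F_0)$ satisfies $g(i(p(\iota(u))))\in U'$, so $p(\iota(u))\in g^{-1}(U')\cap Y(F_0)=V'$; thus $p\circ\iota$ does map $U'$ into $V'$. The identity $g|_{V'}\circ(p\circ\iota|_{U'})=g\circ i\circ p\circ\iota|_{U'}$ then exhibits a bijection $U'\to U'$ as the composition of two maps, the second of which, $g|_{V'}\colon V'\to U'$, is already known to be bijective; set-theoretically this forces $p\circ\iota|_{U'}\colon U'\to V'$ to be a bijection as well.

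I do not anticipate any real obstacle: the analytic content sits entirely inside Lemmas~\ref{bijective to the ball} and~\ref{local bijectivity}, and the only step requiring attention is choosing a single $\varepsilon$ small enough that both applications of Lemma~\ref{local bijectivity} (the one implicit in Lemma~\ref{bijective to the ball} and the one for the composite $g\circ p$ in standard form) are simultaneously valid on the common disk $U'$.
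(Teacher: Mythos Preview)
Your proposal is correct and follows essentially the same route as the paper's own proof: both combine Lemma~\ref{bijective to the ball} (giving the bijection $g|_{V'}\colon V'\to U'$) with Lemma~\ref{local bijectivity} applied to the standard-form composite $g\circ p$ (giving a bijection $U'\to U'$), and then observe that $p(U')\subset V'$ because $V'=g^{-1}(U')\cap Y(F_0)$, so the factorization $g|_{V'}\circ(p|_{U'})$ forces $p|_{U'}$ to be bijective. The only cosmetic difference is the order of shrinking: the paper first applies Lemma~\ref{local bijectivity}, then Lemma~\ref{bijective to the ball} to pass to smaller $U''\subset U'$, $V''\subset V'$, whereas you fix a single $\varepsilon$ small enough for both lemmas at once; the content is the same.
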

\begin{proof}
By Proposition~\ref{standard retraction} and Lemma~\ref{local
bijectivity}, we may find $U' \subset \mbb A^d$, $V' \subset Y$ so that
the composition $U' \to V' \to U'$ is bijective.
By Lemma~\ref{bijective to the ball}, we may find $V'' \subset V'$ such
that $V'' \to U'$ in bijective onto a $t$-adic disk $U'' \subset U'$.
But now again the composition $U'' \to U''$ is bijective, and since
$V'' \to U''$ is also bijective, we find $U'' \to V''$ is bijective as
well.
\end{proof}

\subsection{Factorization}

\begin{thm} \label{abstract small factorization}
Under Hypothesis~\ref{factorization hyp}, let $f : \mbb A^d_{F_0} \times
\mbb A^d_{F_0} \dra \mbb A^d_{F_0}$ be an $F_0$-rational map that is
defined on a Zariski open set $U \subseteq \mbb A^d_{F_0} \times \mbb
A^d_{F_0}$ containing the origin $(0,0)$. 
Suppose further that we may write: 
\begin{gather*}
f = (f_1, \ldots, f_d), \ \ f_i \in \wh \fk[x_1, y_1 \ldots, x_d,
y_d]_{\mf m} \\ 
\text{where } f_i = x_i + y_i + \sum_{|(\nu, \rho)| \geq 2} c_{\nu,
\rho, i} x^\nu y^\rho.
\end{gather*}
Then there is a real number $\varepsilon > 0$ such
that for all $a \in \mbb A^d(F_0)$ with $|a| \leq \varepsilon$, there
exist $v \in \m V^d$ and $w \in \m W^d$ such that $(v,w) \in U(F_0)$ and
$f(v, w) = a$.
\end{thm}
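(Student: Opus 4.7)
The plan is to solve $f(v,w) = a$ by Newton-style successive approximation in the $t$-adic topology: starting from $(v_0, w_0) = (0,0)$, at each stage split the current error across $\m V$ and $\m W$ using the hypothesis $\wh R_0 = \m V + \m W$, and use the quadratic-remainder estimate of Proposition~\ref{revised taylor with error} to force the residual to decay geometrically. As setup, since each $f_i$ lies in the local ring at the origin with $f_i(0,0) = 0$, Lemma~\ref{regular to adic} produces $r \leq 1$ such that every $f_i \in \wh A_r$ with $|f_i|_r \leq 1$. Fix $\varepsilon$ to be a power of $|t|$ satisfying $\varepsilon \leq |t|r^2$ and small enough that, using Lemma~\ref{adic vs Zariski}, the $t$-adic polydisk of radius $\varepsilon$ about the origin in $\mbb A^{2d}(F_0)$ is contained in $U(F_0)$.

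The key preliminary observation is a norm-controlled decomposition: for any $u \in \wh R_0$, write $u = t^m u'$ with $u'$ a unit of norm $1$ (or $u = 0$); then $\wh R_0 = \m V + \m W$ yields $u' = v_0 + w_0$ with $v_0 \in \m V, w_0 \in \m W$, each of norm $\leq 1$ since both lie in $\wh R_0$. Because $\m V, \m W$ are $T$-modules and $t^m \in T$, we obtain $u = t^m v_0 + t^m w_0$ with $t^m v_0 \in \m V$, $t^m w_0 \in \m W$, and each summand of norm $\leq |u|$. Applied coordinatewise, this splits any $e \in \wh R_0^d$ into $\m V^d + \m W^d$ with the norm of each piece bounded by $|e|$.

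For the iteration, given $a$ with $|a| \leq \varepsilon$, I construct inductively $v_n \in \m V^d, w_n \in \m W^d$ with $|v_n|, |w_n| \leq \varepsilon$ and $|a - f(v_n, w_n)| \leq \varepsilon|t|^n$. The base case uses $v_0 = w_0 = 0$ together with $f(0,0) = 0$. For the inductive step, apply the controlled decomposition to $e_n := a - f(v_n, w_n) \in \wh R_0^d$, obtaining $e_n = v' + w'$ with $v' \in \m V^d, w' \in \m W^d$ and $|v'|, |w'| \leq \varepsilon|t|^n$; set $v_{n+1} = v_n + v'$, $w_{n+1} = w_n + w'$, which still have norm $\leq \varepsilon$. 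Applying Proposition~\ref{revised taylor with error} to each coordinate $f_i$ in the $2d$ variables, with linear form $L_i(x,y) = x_i + y_i$, base point $(v_n, w_n)$, and increment $(v', w')$, yields
\[|f(v_{n+1}, w_{n+1}) - f(v_n, w_n) - (v' + w')| \leq |t|\max(|v'|,|w'|) \leq \varepsilon|t|^{n+1}.\]
Since $v' + w' = e_n$ by construction, this rearranges to $|a - f(v_{n+1}, w_{n+1})| \leq \varepsilon|t|^{n+1}$, closing the induction.

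Finally, $(v_n)$ and $(w_n)$ are $t$-adically Cauchy (since $v_{n+1} - v_n = v'$ with $|v'| \leq \varepsilon|t|^n \to 0$), and by $t$-adic completeness of $\m V, \m W$ they converge to $v \in \m V^d, w \in \m W^d$ with $|v|, |w| \leq \varepsilon$; hence $(v,w) \in U(F_0)$, so $f(v,w)$ is defined, and continuity of $f$ in the $t$-adic topology combined with $|a - f(v_n, w_n)| \to 0$ gives $f(v,w) = a$. The main obstacle is the norm-controlled decomposition step: the bare hypothesis $\wh R_0 = \m V + \m W$ gives no norm control on the pieces, and without such control one cannot keep the iterates inside the domain of $f$ nor drive the quadratic-remainder estimate into geometric decay. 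It is precisely the $T$-module structure of $\m V$ and $\m W$, via factoring out the uniformizer $t$, that rescues this and makes the whole iteration work.
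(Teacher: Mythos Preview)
Your argument is correct and is essentially the same successive-approximation scheme as in \cite{HHK}, Theorem~2.5, to which the paper defers: split the residual error through $\wh R_0 = \m V + \m W$ after pulling out a power of $t$ for norm control, then use the linear-plus-quadratic estimate of Proposition~\ref{revised taylor with error} to force geometric decay. Your explicit isolation of the ``norm-controlled decomposition'' step, and your remark that it is exactly the $T$-module structure of $\m V,\m W$ that makes it work, matches the mechanism in \cite{HHK} precisely.
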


\begin{proof}
The proof of this theorem is exactly as in \cite{HHK}, Theorem~{2.5},
wherein in the first paragraph, the problem is reduced to exactly the
hypotheses which we assume.
\end{proof}

\begin{thm} \label{abstract retract rational factorization}
Under Hypothesis~\ref{factorization hyp}, let $m : Y \times Y \to Y$ be
a rational $F$-morphism defined at $(0, 0)$, and suppose that $m(y, 0) =
y = m(0, y)$ where it is defined. Suppose that $Y$ is a closed retraction of an
open subscheme of $\mbb A^n$. Then there exists $\varepsilon > 0$ such
that for $y \in Y(F_0) \subset \mbb A^n(F_0)$, $|y| \leq \varepsilon$, there
exist $y_i \in Y(F_i)$, $i = 1, 2$ such that $y = m(y_1, y_2)$. 
\end{thm}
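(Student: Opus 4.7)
The plan is to reduce to Theorem~\ref{abstract small factorization} by transferring $m$ to a rational map $\til m$ on $\mbb A^d$ via the retraction. By Lemma~\ref{standard retraction}, after shrinking $U$ and choosing coordinates we may assume $Y \subset U \subset \mbb A^n$ is cut out by $f_j = x_j + P_j$ for $1 \le j \le n-d$, with each $P_j$ of order at least $2$, and that $ip$ has the standard form of that lemma. Corollary~\ref{local bijection corollary} then supplies $t$-adic disks $U' \subset \mbb A^d(F_0)$ and $V' \subset Y(F_0)$ about the origin, together with bijections
\[ \alpha : U' \to V', \qquad \beta : V' \to U', \]
where $\alpha$ is the restriction of the $F$-morphism $\mbb A^d \hookrightarrow \mbb A^n \supset U \xrightarrow{p} Y$ (embedding $\mbb A^d$ as the coordinate subspace $x_1 = \cdots = x_{n-d} = 0$) and $\beta$ is the restriction of the projection onto the last $d$ coordinates. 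These are not mutually inverse in general, but $\beta$ is in particular injective on $V'$, which is all I shall require.

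Define the rational $F$-map $\til m : \mbb A^d \times \mbb A^d \dra \mbb A^d$ by $\til m(a, b) = \beta(m(\alpha(a), \alpha(b)))$, which is defined at $(0, 0)$. Using $m(y, 0) = y = m(0, y)$ we obtain $\til m(a, 0) = \gamma(a)$ and $\til m(0, b) = \gamma(b)$ on $U' \times \{0\}$ and $\{0\} \times U'$, where $\gamma := \beta \circ \alpha$. A direct computation using the standard form of Lemma~\ref{standard retraction}, exploiting that the $Q_j$ appearing there have vanishing first partial derivatives at the origin, shows $\gamma(a) = a + O(|a|^2)$; explicitly, $\alpha(a)$ has last $d$ coordinates $a + (Q_{n-d+1}(0,a), \ldots, Q_n(0,a))$, which equals $a + O(|a|^2)$. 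Hence the linear part of each $\til m_i$ is exactly $x_i + y_i$. Since the $t$-adic topology is finer than the Zariski topology (Lemma~\ref{adic vs Zariski}), the local identities above promote to identities of formal power series at the origin, giving
\[ \til m_i = x_i + y_i + \sum_{|(\nu, \rho)| \geq 2} c_{\nu, \rho, i}\, x^\nu y^\rho, \]
which is precisely the hypothesis required by Theorem~\ref{abstract small factorization}.

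That theorem yields $\varepsilon_0 > 0$ such that for every $a \in \mbb A^d(F_0)$ with $|a| \le \varepsilon_0$ there exist $v \in \m V^d$, $w \in \m W^d$ of norm at most $\varepsilon_0$ with $\til m(v, w) = a$. I now choose $\varepsilon > 0$ small enough that every $y \in Y(F_0)$ with $|y| \le \varepsilon$ lies in $V'$ and satisfies $|\beta(y)| \le \varepsilon_0$, and such that for the resulting $v, w$ the points $\alpha(v)$, $\alpha(w)$, $m(\alpha(v), \alpha(w))$ all lie in $V'$; this is possible by uniform $t$-adic radii of convergence extracted from Lemma~\ref{regular to adic}. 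Given such a $y$, set $a = \beta(y)$ and obtain $v, w$ from Theorem~\ref{abstract small factorization}. Since $\m V \subset F_1$ and $\m W \subset F_2$ by Hypothesis~\ref{factorization hyp} and $\alpha$ is an $F$-morphism defined at $v, w$, the points $y_1 := \alpha(v) \in Y(F_1)$ and $y_2 := \alpha(w) \in Y(F_2)$ are well-defined. Then $\beta(m(y_1, y_2)) = \til m(v, w) = a = \beta(y)$ with both $m(y_1, y_2)$ and $y$ in $V'$, so injectivity of $\beta$ on $V'$ yields $m(y_1, y_2) = y$, completing the factorization.

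The main technical obstacle is that the set-theoretic inverse $\alpha^{-1}$ is not itself a rational $F$-morphism: $\alpha$ and $\beta$ are not inverses in the category of schemes, and the argument must route the lift from $\mbb A^d$ back to $Y$ through $\alpha$ and then undo the resulting discrepancy using the injectivity of $\beta$ on $V'$, rather than by inverting one map by the other. Additionally, a single $\varepsilon$ must be arranged to govern simultaneously the domains of $\alpha, \beta, m, \til m$, the bound feeding Theorem~\ref{abstract small factorization}, and the bound ensuring $m(\alpha(v), \alpha(w)) \in V'$. These estimates are standard consequences of the nonarchimedean property of the $t$-adic norm together with Lemmas~\ref{regular to adic} and~\ref{adic vs Zariski}, and it is these estimates that constitute the genuinely new work required in the retract rational case as compared with the rational case of \cite{HHK}.
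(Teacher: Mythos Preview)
Your proof is correct and follows essentially the same route as the paper's: transfer $m$ to a rational map $\mu$ on $\mbb A^d$ via the retraction maps of Corollary~\ref{local bijection corollary}, use Lemma~\ref{standard retraction} to identify the linear part of $\mu$ as $x_i + y_i$, and then invoke Theorem~\ref{abstract small factorization}. Your write-up is in fact more explicit than the paper's in spelling out why injectivity of $\beta$ on $V'$ closes the argument; two small quibbles are that the bound $|v|,|w| \le \varepsilon_0$ you use is not part of the \emph{statement} of Theorem~\ref{abstract small factorization} (though it does fall out of its proof in \cite{HHK}), and that the identity $\til m(a,0) = \gamma(a)$ is really an identity of rational functions (hence of Taylor series) directly, so the appeal to Lemma~\ref{adic vs Zariski} there is unnecessary.
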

\begin{proof}
We consider as in Corollary~\ref{local bijection corollary}, $t$-adic neighborhoods of $0$
$U' \subset \mbb A^d(F_0)$ and $V' \subset Y(F_0)$ such that we have
bijections $U' \to V'$ and $V' \to U'$ defined by algebraic rational
morphisms $p' : \mbb \A^d \dra Y$ and $i' : Y \dra \mbb A^d$. We consider
\[\xymatrix{
    V' \times V' \ar[rr]^{m|_{V' \times V'}} & & V' \ar[d]_{i'} \\
    U' \times U' \ar[u]^{p'} \ar[rr] & & U' \\
}\]
By hypothesis, the composition in the bottom $U' \times U' \to U'$ is
given as the restriction of an algebraic rational morphism $\mu : \mbb A^d
\times \mbb A^d \to \mbb A^d$. By Corollary~\ref{local bijection
corollary}, it is sufficient to show that $\mu$ is surjective when
restricted to a sufficiently small $t$-adic neighborhood.

We first shrink $V', U'$ if necessary to make them contained in Zariski
neighborhoods $V, U$ as in Lemma~\ref{standard retraction}.  Now, we
note that the rational map $\mu|_{\mbb A^d \times \{0\}}$ is just $ip$,
since $m|_{Y \times \{0\}} = id_Y$. By Lemma~\ref{standard retraction},
we find
\[m|_{\mbb A^d \times \{0\}} (x_1, \ldots, x_d) = (x_1 + Q_1, \ldots,
x_d + Q_d)\]
where $Q_i$ is a rational function in the variables $x_i$, regular on $U$,
such that $\left. \frac\partial{\partial x_j} Q_i\right|_0 = 0$ for all $i,j$. But now
we are done, using Theorem~\ref{abstract small factorization}.
\end{proof}

\begin{thm}[Factorization for retract rational groups] \label{retract rational factorization}
Under Hypothesis~\ref{factorization hyp}, assume that $F = \F(\mbb
P^1_T)$, $F_1 = F_{\mbb A^1_\fk}$, $F_2 = F_{\infty}$, and $F_0 =
F_\wp$, where $\wp$ is the unique branch at $\infty$.  Let $G$ be a
retract rational connected linear algebraic group defined over $F$. Then
for any $g_0 \in G(F_0)$ there exist $g_i \in G(F_i)$, $i = 1, 2$, such
that $g_1 g_2 = g_0$ --- that is to say, factorization holds for $G$ with
respect to $\mbb P^1_T$ (see Definition~\ref{factorization definition
1}).
\end{thm}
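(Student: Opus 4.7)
The plan is to deduce the theorem from the small-neighborhood factorization of Theorem \ref{abstract retract rational factorization}, applied to the group multiplication of $G$, and then to pass from a $t$-adic neighborhood of the identity to all of $G(F_0)$ by a translation argument.

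First, I would exploit retract rationality to set up the relevant affine chart near the identity $e \in G$. By Corollary \ref{retraction closed morphism} there is a dense open $G_0 \subseteq G$ realized as a closed retraction of an open $U \subseteq \mbb A^n_F$. Retract rational varieties are unirational, so since $F$ is infinite the group $G(F)$ is Zariski dense in $G$; after left-translating by a suitable $F$-point we may arrange $e \in G_0$. Lemma \ref{standard retraction} then puts the retraction in standard position so that $e \leftrightarrow 0$. The multiplication $m : G \times G \to G$ restricts to a rational $F$-morphism $G_0 \times G_0 \dra G_0$ defined at $(e,e)$ with $m(y,e) = y = m(e,y)$. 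Theorem \ref{abstract retract rational factorization}, applied with $Y = G_0$ and this $m$, then supplies a real number $\varepsilon > 0$ such that every $g \in G_0(F_0)$ with $|g| \leq \varepsilon$ (in the $t$-adic norm on $\mbb A^n(F_0)$ via the embedding $G_0 \subseteq U$) factors as $g = g_1 g_2$ with $g_i \in G_0(F_i) \subseteq G(F_i)$.

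Second, I would reduce a general $g_0 \in G(F_0)$ to this small case. The goal is to produce $a \in G(F_1)$ and $b \in G(F_2)$ so that $a^{-1} g_0 b^{-1}$ lies in the $\varepsilon$-ball around $e$; the small factorization then yields $a^{-1} g_0 b^{-1} = h_1 h_2$ with $h_i \in G(F_i)$, and $g_0 = (a h_1)(h_2 b)$ is the desired decomposition. To produce $a$ and $b$, I would transfer the retract-rational chart near $e$ to a chart near $g_0$ via left-translation by $g_0$ (viewed as an $F_0$-automorphism of $G$), obtaining through Corollary \ref{local bijection corollary} a $t$-adic parametrization of a neighborhood of $g_0$ in $G(F_0)$ by a ball in $\mbb A^d(F_0)$. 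I would then use the additive decomposition $\wh R_\wp = \mathcal V + \mathcal W$ from Hypothesis \ref{factorization hyp}, which lifts coordinatewise to $F_0 = F_1 + F_2$, to split the parameter of $g_0$ into pieces in $\mbb A^d(F_i)$, iterating as in the proof of Theorem \ref{abstract retract rational factorization} to upgrade the resulting additive approximation into an approximate multiplicative factorization close enough to $e$ to invoke the small case.

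The hardest step will be the second. The chart near $g_0$ obtained by left-translation is a priori only defined over $F_0$, so I must argue that the iterative correction process genuinely produces elements of $G(F_1)$ and $G(F_2)$ rather than merely of $G(F_0)$; the heart of the matter is the interaction between the additive decomposition $F_0 = F_1 + F_2$ furnished by Hypothesis \ref{factorization hyp} and the multiplicative group law of $G$ near $g_0$. Making this bootstrap from the small case to the general case precise—controlling $t$-adic estimates through the translated chart and ensuring convergence of the corrections into the appropriate subfields—is exactly where the full strength of the retract-rational local structure developed in Section 4.2--4.3 is used. Once that bridge is built, the remainder of the proof is a direct appeal to Theorem \ref{abstract retract rational factorization}.
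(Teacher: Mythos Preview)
Your first paragraph—setting up the closed retraction $Y=G_0\subset U\subset\mbb A^n$ with $e\leftrightarrow 0$ and invoking Theorem~\ref{abstract retract rational factorization} to obtain small-$\varepsilon$ factorization—is exactly what the paper does.

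The second paragraph, however, takes a wrong turn, and you have correctly put your finger on why. Once you left-translate the chart by $g_0\in G(F_0)$, every structure map in sight (the retraction, the local bijection of Corollary~\ref{local bijection corollary}, the Taylor estimates) is only defined over $F_0$. The additive splitting $\wh R_0=\m V+\m W$ then produces parameters in $F_1^d$ and $F_2^d$, but pushing them through an $F_0$-chart does not land you in $G(F_1)$ or $G(F_2)$; the iterative correction of Theorem~\ref{abstract small factorization} requires the map $f$ to be defined over the fields you are trying to factor into, which fails here. So the ``bootstrap'' you describe is not merely the hardest step—it is a genuine obstruction, and nothing in \S5.2--5.3 removes it.

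The paper bypasses this entirely with a much cheaper two-step density argument, never translating the chart. First, since $G$ is retract rational, $G(F)$ is Zariski dense in $G$; as the Zariski topology is coarser than the $t$-adic one (Lemma~\ref{adic vs Zariski}), there is $g'\in G(F)$ with $g'^{-1}g_0\in Y(F_0)$. Second, because $Y$ is a retract of an open in affine space over $F$, the set $Y(F_2)$ is $t$-adically dense in $Y(F_0)$, so one can choose $g''\in Y(F_2)$ with $|g'^{-1}g_0 g''^{-1}|<\varepsilon$. Now the small case gives $g'^{-1}g_0 g''^{-1}=g_1g_2$ with $g_i\in G(F_i)$, and since $g'\in G(F)\subset G(F_1)$ and $g''\in G(F_2)$ one concludes $g_0=(g'g_1)(g_2 g'')$. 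The point is that the translations are by elements already in $G(F)$ and $G(F_2)$, so no $F_0$-chart is ever needed; replace your second paragraph with this argument.
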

\begin{proof}
Using Lemma~\ref{retraction closed}, we may find an open subscheme $Y
\subset G$ which is a retraction of an open subscheme $U$ of affine
space. In particular, $Y$ must contain an $F$-rational point $y \in Y(F)$, and after replacing $Y$ by $y^{-1}
Y$ if necessary, we may assume $Y$ contains the identity element of $G$. Using \ref{abstract retract rational factorization}, where $m$ is
the multiplication map, we find that there exists $\varepsilon > 0$ such
that factorization holds for $g_0 \in G(F_0)$ provided that $|g_0| <
\varepsilon$. Fix such an epsilon, and suppose $g_0 \in G(F_0)$ is an
arbitrary element. Since $G$ is retract rational, it follows that $G(F)$
is Zariski dense in $G(F_0)$. Therefore, we have the existence of an
element $g' \in G(F)$ such that ${g'}^{-1}g_0 \in Y$. Since $Y$ is a
retraction of affine space, it follows that $Y(F_2)$ is
$t$-adically dense in $Y(F_0)$. Therefore we may find $g'' \in Y(F_2)$
such that $|{g'}^{-1} g_0 {g''}^{-1}| < \varepsilon$. Writing
$g'^{-1}g_0 g''^{-1}=
g_1 g_2$ where $g_i \in G(F_i)$, we conclude that $g_0 = (g'g_1)(g_2 g'')$.
Since $g'g_1 \in G(F_1)$ and $g_2g'' \in G(F_2)$, we are done.
\end{proof}

By Lemma~\ref{corestriction reduction} and the comments just following,
we conclude that Theorem~\ref{main theorem} holds.

\subsection{Proof of Lemma~\ref{standard retraction}}

\begin{lem} \label{trivial derivative means quadratic}
Suppose $f = g/h$ for $g, h \in k[x_1, \ldots, x_n]$ with $h(0) \neq 0,
g(0) = 0$ and $(\partial f/\partial x_i)  |_0 = 0$ for all $i$. Then if $R$ is a
$k$-algebra with $h(0) \in R^*$ and containing an element $\epsilon \in
R$, $\epsilon^2 = 0$ then $f(\epsilon v) = 0$ for $v \in k^n$.
\end{lem}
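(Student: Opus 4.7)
The plan is essentially direct computation using the fact that $\epsilon^2 = 0$ kills all Taylor expansion terms beyond first order. The main observation is that the hypothesis $(\partial f/\partial x_i)|_0 = 0$ combined with $g(0) = 0$ forces $(\partial g/\partial x_i)|_0 = 0$ as well, so $g$ itself has vanishing linear part at the origin; then $g(\epsilon v) = 0$ in any ring with $\epsilon^2 = 0$.

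More precisely, I would first apply the quotient rule in the field $k(x_1, \ldots, x_n)$ to write
\[
\frac{\partial f}{\partial x_i} = \frac{h \,\partial g/\partial x_i - g\,\partial h/\partial x_i}{h^2}.
\]
Evaluating at $0$ and using $g(0) = 0$, $h(0) \neq 0$, the hypothesis $(\partial f/\partial x_i)|_0 = 0$ yields $(\partial g/\partial x_i)|_0 = 0$ for every $i$.

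Next, write any polynomial $p \in k[x_1, \ldots, x_n]$ as $p = p(0) + \sum_i x_i \,(\partial p/\partial x_i)|_0 + \sum_{|\alpha| \ge 2} c_\alpha x^\alpha$. Substituting $x_i = \epsilon v_i$ and using $\epsilon^2 = 0$, every term of degree $\ge 2$ contributes $0$ in $R$, so
\[
p(\epsilon v) = p(0) + \epsilon \sum_i v_i\,\frac{\partial p}{\partial x_i}\bigg|_0.
\]
Applying this to $g$ (using $g(0) = 0$ and the vanishing of the partials of $g$ at $0$) gives $g(\epsilon v) = 0$. Applying it to $h$ gives $h(\epsilon v) = h(0) + \epsilon \sum_i v_i (\partial h/\partial x_i)|_0$; since $h(0) \in R^\times$ and the second summand is nilpotent, $h(\epsilon v)$ is a unit in $R$. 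Therefore $f(\epsilon v) = g(\epsilon v)/h(\epsilon v) = 0$, as required.

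There is essentially no obstacle: the only subtlety is making sure $f(\epsilon v)$ is well-defined as an element of $R$, which is exactly the role of the invertibility hypothesis on $h(0)$. Everything else is the standard algebraic substitute for the calculus fact that a function with zero value and zero first-order derivative at the origin has no first-order contribution to its value at infinitesimal points.
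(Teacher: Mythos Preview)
Your proof is correct and follows essentially the same approach as the paper's: both use the quotient rule together with $g(0)=0$ and $h(0)\neq 0$ to deduce that the linear part of $g$ vanishes, and then use $\epsilon^2=0$ to kill the higher-order terms in $g(\epsilon v)$ while $h(\epsilon v)$ remains a unit. Your write-up is in fact slightly cleaner, since you extract $(\partial g/\partial x_i)|_0=0$ directly rather than first decomposing $g=L+Q$.
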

\begin{proof}
Since $g(0) = 0$, we may write $g = L + Q$ where $L$ is a linear
polynomial, and $Q$ is a sum of homogeneous terms of degree at least
$2$. Now we simply note that
\[\frac{\partial f}{\partial x_i} = \frac{h (\partial L/\partial x_i +
\partial Q/\partial x_i) - (L + Q) (\partial h/\partial x_i)}{h^2}\]
and in particular since $h(0) \neq 0$, we find $(\partial f/\partial
x_i) |_0 = 0$ implies that $h(0)(\partial L/\partial x_i(0)) = 0$,
which implies that all the coefficients of the linear form $L$ are $0$
and so $L = 0$. Since $h(0) \neq 0$, it follows that $h(\epsilon v)$ is
a unit, and we therefore may note that $f(\epsilon v) = Q(\epsilon
v)/h(\epsilon v)$ is well defined and $\epsilon^2 = 0$ implies
$Q(\epsilon v) = 0$, showing that $f(\epsilon v) = 0$ as desired.
\end{proof}

We now proceed with the proof of Lemma~\ref{standard retraction}.
By Lemma~\ref{smooth retraction}, we may assume that $Y$ is smooth.
Choose $f_1, \ldots, f_r$ which are regular on a neighborhood
of $0 \in U$ and which cut out $Y$. Writing $f_i = g_i/h_i$, for $g_i$ and
$h_i$ with no common factors, we see that since the $h_i$ don't vanish
at $0$, after shrinking $U$ so that the $h_i$ don't vanish on $U$, we may
ensure that the $h_i$ are units, and hence $Y$ is cut out by the $g_i$.
Therefore we may assume (after replacing $f_i$ by $g_i$ and shrinking
$U$), that the $f_i$ are polynomials. Next, we write 
\[f_i = L_i + P_i\]
where $L_i$ is a linear polynomial and $P_i$ has degree at least $2$.
Note that $f_i$ has no constant term since it must vanish at $0$. Since
$Y$ is smooth of dimension $d$, by the Jacobian criterion, the $L_i$'s
(which we may identify with the gradient of $f_i$ at $0$), span a $n-d$
dimensional space. After relabelling, we may assume that $L_1, \ldots,
L_{n-d}$ give a basis for this space. Let $\til Y$ be the zero locus of
$f_1, \ldots, f_{n-d}$. Since $Y \subset \til Y$ we have the codimension
of $\til Y$ at $0$, $codim_0(\til Y) \leq codim(Y) = n-d$. By
construction, the Jacobian matrix of the defining equations for $\til Y$
at $0$ has rank $n-d$, and so by \cite{Eis:CA}, page 402, $n-d \leq
codim(\til Y)$. But then
\[ n-d \leq codim_0(\til Y) \leq codim(Y) = n-d\]
so $codim_0(\til Y) = n-d$ and also by the Jacobian criterion, we
conclude that $\til Y$ is smooth at $0$. We may therefore, after
shrinking $U$ assume that $\til Y$ is smooth, irreducible, and of the
same dimension as $Y$. But $Y \subset \til Y$ therefore implies $Y =
\til Y$, and in particular we may assume $r = n-d$, and the $L_i$ are
independent.

After choosing a new basis for $\mbb A^n$, it is clear that we may
assume $L_i = x_i$ while preserving our hypotheses.

Finally, consider the morphism $\gamma = ip : U \to U$ (where $i$ and $p$ are as in the definition of the
retraction), and write
$\gamma(\vec{x}) = (\gamma_1(\vec x), \ldots, \gamma_n(\vec x))$, where
each $\gamma_i$ is a regular function on $U$. Since $\gamma_i(0) = 0$,
we may write $\gamma_i = M_i + Q_i$ for the linear function 
\[M_i = \sum \left.\frac\partial{\partial x_i} \gamma_i \right|_{\vec x
= 0} x_i \]
and have all the partial derivatives of the $Q_i$ vanishing.
Let $\mbb T = \Spec k[\epsilon]/(\epsilon^2)$, and consider a $\mbb T$-valued point $\tau :
\mbb T \to U$ given by $\vec a \epsilon = (a_1 \epsilon, \ldots, a_n
\epsilon) \in \mbb A^n(k[\epsilon]/(\epsilon^2))$. We note that $\tau$
maps $\mbb T$ into $Y$ if and only if $f_i(\vec a \epsilon) = 0$ for
each $i$. But we have (by Lemma~\ref{trivial derivative means quadratic})
\[f_i(\vec a \epsilon) = L_i(\vec a \epsilon) = \epsilon L_i(\vec a).\]
In particular, this occurs exactly when $a_i = 0$ for $1 \leq i \leq
n-d$. Since $\gamma(\vec a \epsilon) \in Y$, we therefore have $M_i = 0$
for $1 \leq i \leq n-d$. Since $\gamma|_Y = \id_Y$, looking on $\mbb
T$-valued points of $Y$ under $\gamma$, we find $M_i = M_i' + x_i$ for
$n-d < i \leq n$ where $M_i'$ is a linear function of $x_1, \ldots,
x_{n-d}$. Consider the linear function $\mbb A^n \to \mbb A^n$ given by
\[\phi : (x_1, \ldots x_n) \mapsto (y_1, \ldots, y_n) \]
where
\[ y_i = \left\{
    \begin{matrix}
    x_i & \text{if } 1 \leq i \leq n-d \\
    x_i - M_i & n-d < i \leq n
    \end{matrix}
    \right.
\]
Define rational maps $i' = \phi \circ i : Y \dra \mbb A^n$ and $p' = p
\circ \phi^{-1} : \mbb A^n \dra Y$. We then have $p' \circ i' = p \circ
\phi^{-1} \phi i = pi = id_Y$ as rational maps, and therefore define a
rational retraction. By Lemma~\ref{retraction closed} we may shrink $U$
and $Y$ to make this a closed retraction. Note also that $i'p' = \phi ip
\phi^{-1} =
\phi \gamma \phi^{-1}$. 

As before, let 
$\tau : \mbb T \to U$ be given by $\vec a \epsilon = (a_1 \epsilon,
\ldots, a_n \epsilon) \in \mbb A^n(k[\epsilon]/(\epsilon^2))$, where $\vec a \in \mbb A^n(k)$. We 
consider the morphism $i' p' : U \to Y \to U$, which we write as 
\[(x_1, \ldots, x_n) \mapsto (N_1 + P_1, \ldots, N_n + P_n) \]
with $N_i$ linear and the first derivatives of the $P_i$ vanishing at the
origin.  Computing using Lemma~\ref{trivial derivative means quadratic}
applied to functions $P_i$, we then find
\[i' p'(\tau) = \epsilon (N_1(\vec a), \ldots, N_n(\vec a)) \]
and also, using the linearity of $\phi$ and the fact that $i'p' = 
\phi \gamma \phi^{-1} = \phi \circ (M + Q) \circ \phi^{-1}$,
\begin{align*}
i' p'(\tau) &= \epsilon \phi (M_1(\phi(\vec a), \ldots, M_n(\phi^{-1}(\vec
a))) \\
&= \epsilon (0, \ldots, 0, a_{n-d+1}, \ldots, a_n),
\end{align*}
and so 
\[ N_i = \left\{
    \begin{matrix}
    0 & \text{if } 1 \leq i \leq n-d \\
    x_i & \text{if } n-d < i \leq n
    \end{matrix}
    \right.
\]
Therefore, upon replacing $p, i$ by $p', i'$, we obtain the desired
conclusion.

\def\cprime{$'$} \def\cprime{$'$} \def\cprime{$'$} \def\cprime{$'$}
  \def\cftil#1{\ifmmode\setbox7\hbox{$\accent"5E#1$}\else
  \setbox7\hbox{\accent"5E#1}\penalty 10000\relax\fi\raise 1\ht7
  \hbox{\lower1.15ex\hbox to 1\wd7{\hss\accent"7E\hss}}\penalty 10000
  \hskip-1\wd7\penalty 10000\box7}

\end{document}